\def\C{{\rm C}}
\def\CH{{\rm\hat{C}}}
\def\schur{m}
\long\def\comment#1\endcomment{}
\def\mat#1{\begingroup\smaller[2]
\baselineskip 8pt
\def\arraystretch{0.8}
\def\arraycolsep{4pt}
\left(\!
\begin{array}{cccccccccc}
#1
\end{array}
\right)\endgroup}
\begin{document}
\title[Rational representations and permutation representations]{Rational representations and permutation representations of finite groups}
\author{Alex Bartel}
\address{Mathematics Institute, University of Warwick,
Coventry CV4 7AL, UK}
\email{a.bartel@warwick.ac.uk}
\author{Tim Dokchitser}
\address{Department of Mathematics, University Walk, Bristol BS8 1TW, UK}
\email{tim.dokchitser@bristol.ac.uk}
\llap{.\hskip 10cm} \vskip -0.8cm

\maketitle

\begin{abstract}
We investigate the question which $\Q$-valued characters
and characters of $\Q$-representations of finite groups are
$\Z$-linear combinations of permutation characters. 
This question is known to reduce to that for quasi-elementary groups,
and we give a solution in that case. As one of the applications, we
exhibit a family of simple groups with rational representations
whose smallest multiple that is a permutation representation can be arbitrarily
large.
\end{abstract}

\section{Introduction}
\label{sIntro}

Many rational invariants of a finite group $G$ are encoded in the ring $\Char_\Q(G)$ 
of rationally-valued characters, the ring $R_\Q(G)$ of rational representations,
and the ring $\Perm(G)$ of virtual permutation representations. All three have the same 
$\Z$-rank, and there are natural inclusions with finite cokernels
$$
  \Perm(G) \quad\lar\quad R_\Q(G) \quad\lar\quad \Char_\Q(G).
$$
The quotient $\Char_\Q(G)/R_\Q(G)$ is studied by the theory of Schur indices,
and the purpose of this paper is to investigate the other two, 
$$
  \C(G) = \frac{R_\Q(G)}{\Perm(G)} \qquad\text{and}\qquad
  \CH(G) = \frac{\Char_\Q(G)}{\Perm(G)}.
$$
They have exponent dividing $|G|$ by Artin's induction theorem, 
and Serre remarked that $\C(G)$ need not be trivial (\cite{SerLi} Exc. 13.4). 
It is trivial for $p$-groups \cite{Feit,Ritter,Segal},
and it is known for nilpotent groups \cite{Ras} (see also \S\ref{sBasic}),
for Weyl groups of Lie groups
\cite{Sol,Kle} and in other special cases \cite{Berz,HT}. 
It follows from the general results of Dress, Kletzing, and Hambleton-Taylor-Williams \cite{Dress1, Dress2, Kle, HTW},
that the study of $\C(G)$ for a group $G$ reduces, in principle, 
to that of its quasi-elementary subgroups, or of its `basic' quasi-elementary subquotients. 
Specifically, for subgroups the statement is that of the two maps
$$
\prod_{Q\leq G\atop\text{quasi-elem.}}\C(Q)
  \quad\stackrel{\Ind}{\hbox to 2.5em{\rightarrowfill}}\quad 
 \C(G)
  \quad\stackrel{\Res}{\hbox to 2.5em{\rightarrowfill}}\quad 
\prod_{Q\leq G\atop\text{quasi-elem.}}\C(Q),
$$
the first one is surjective and the second one injective, and similarly for $\CH$.
This is also an immediate consequence of Solomon's induction theorem, see \S\ref{sMackey}.

Our first observation is that the composite map allows us to describe $\C(G)$ and $\CH(G)$
explicitly, in a way that bypasses the representation theory of $G$ --- purely
in terms of quasi-elementary subgroups and the `$\Res\Ind$' maps between them;
in fact,
it is enough to consider maximal quasi-elementary subgroups, i.e. $p$-normalisers of
cyclic subgroups of $ G$. In \S\ref{sMackey} we give a simple formula for the $\Res\Ind$ maps,
and in \S\ref{sQE} we prove one of the main results of the paper, 
which  describes $\C(Q)$ and 
$\CH(Q)$ for a $p$-quasi-elementary group $Q=C\rtimes P$. 

Its simplest formulation is:

\begin{theorem}[=Theorem \ref{thmqemain1}]
Let $\rho$ be an irreducible rational representation of 
a $p$-quasi-elementary group $Q=C\rtimes P$.
(So $C$ is cyclic, $P$ a $p$-group, and $p\nmid|C|$.)
The order of $\rho$ in $\C(Q)$ is $\frac{\dim\hat\psi\dim{\hat\pi}}{\dim\rho}$,
where $\hat\psi$ is the (unique) rational irreducible constituent of $\Res_C\rho$
and ${\hat\pi}$ a rational irreducible constituent of $\Res_P\rho$ of minimal dimension.
\end{theorem}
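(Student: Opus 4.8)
The plan is to prove the two divisibilities $n\mid\operatorname{ord}(\rho)$ and $\operatorname{ord}(\rho)\mid n$ separately, where $n=\frac{\dim\hat\psi\,\dim\hat\pi}{\dim\rho}$, after first making all three dimensions explicit by Clifford theory. Writing $P_\psi=\Stab_P(\psi)$ and $Q_\psi=C\rtimes P_\psi$, every complex constituent of $\rho$ has the form $\chi=\Ind_{Q_\psi}^Q(\tilde\psi\cdot\theta)$, where $\tilde\psi$ is the canonical extension of $\psi$ to $Q_\psi$ that is trivial on $P_\psi$ (it exists since $\gcd(|C|,|P|)=1$) and $\theta\in\Irr(P_\psi)$. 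Because $Q=CP$ and $P\cap Q_\psi=P_\psi$, Mackey involves a single double coset and yields the clean restrictions $\Res_P\chi=\Ind_{P_\psi}^P\theta$ and $\Res_C\chi=(\dim\theta)\sum_{gP_\psi}\psi^g$. Since $C$ is cyclic, $\operatorname{Aut}(C)$ is abelian, so the actions of $P$ and of $\operatorname{Gal}(\bar\Q/\Q)$ on $\hat C$ commute while Galois is transitive on characters of a fixed order $d$; this shows that $\hat\psi$ is the sum of all characters of $C$ of order $d=\operatorname{ord}(\psi)$, so $\dim\hat\psi=\varphi(d)$ and $\hat\psi$ is the unique rational constituent of $\Res_C\rho$. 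The same bookkeeping gives $\dim\rho=[\Q(\chi):\Q]\,[P:P_\psi]\dim\theta$, and identifies $\hat\pi$ as a minimal rational constituent of $\Res_P\rho=\Ind_{P_\psi}^P\theta$ completed over its Galois orbit. This reduces the theorem to a statement about the $p$-group $P$: when $P$ is abelian it exhibits $n$ as the index $\dim\hat\pi/\dim\hat\theta$, with $\hat\theta$ the rational irreducible of $P_\psi$ containing $\theta$, and in particular $n$ is a power of $p$.

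For the lower bound I would use that the multiplicity of the rational irreducible $\rho$ in a permutation character $\Ind_H^Q\mathbf 1$ equals $\langle\Ind_H^Q\mathbf 1,\chi\rangle=\dim\chi^H$. If $n\mid\dim\chi^H$ for every $H\le Q$, then the $\rho$-multiplicity of every element of $\Perm(Q)$ lies in $n\Z$, so any $k$ with $k\rho\in\Perm(Q)$ is divisible by $n$, whence $n\mid\operatorname{ord}(\rho)$. I would expand $\dim\chi^H=\sum_{g\in H\backslash Q/Q_\psi}\langle{}^g(\tilde\psi\theta),\mathbf 1\rangle_{H\cap{}^gQ_\psi}$: the $C$-part forces $\psi^g$ to be trivial on $H\cap C$, and each surviving term becomes the multiplicity of the trivial character in the restriction of $\theta$ to a $p$-subgroup. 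The definition of $\hat\pi$ as the minimal rational representation of $P$ lying over $\theta$ is exactly what forces each such term to be a multiple of $n$.

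For the upper bound I would realise $n\rho$ as an explicit integral combination of permutation characters, exploiting $\C(C)=0$ and $\C(P)=0$. Both $\Ind_C^Q\hat\psi$ and $\Ind_P^Q\hat\pi$ lie in $\Perm(Q)$ — the second because $\hat\pi\in R_\Q(P)=\Perm(P)$ — as do the inflations to $Q$ of permutation modules of $P$. Computing $\Ind_P^Q\hat\pi$ by Frobenius reciprocity, its $\rho$-coefficient is $\langle\Res_{P_\psi}\hat\pi,\theta\rangle$, and its remaining constituents lie in blocks with strictly larger inertia or are inflated from proper quotients of $Q$. Combining these induced and inflated modules and running an induction on $|P|$ — whose base case $P_\psi=P$ gives $\hat\pi=\hat\theta$, $n=1$, and $\rho$ visibly a permutation representation — lets me subtract off the unwanted constituents and conclude $n\rho\in\Perm(Q)$, i.e. $\operatorname{ord}(\rho)\mid n$.

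The hard part will be the exact matching of the two bounds: showing that the single $p$-local index governs both the uniform divisibility $n\mid\dim\chi^H$ for all $H$ and the precise cancellation of unwanted constituents in the construction of $n\rho$. Both rest on the same delicate point, namely how the global Galois and Clifford structure over $Q$ interacts with the purely $p$-group datum $(P_\psi\le P,\theta)$ and its minimal rational extension $\hat\pi$. I expect to control this by induction on $|P|$ together with careful Mackey and Frobenius bookkeeping, which is where essentially all of the work lies.
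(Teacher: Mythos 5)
Your outline shares some ingredients with the paper's proof (Clifford theory for $\tau=\Ind_{Q_\psi}^Q\tilde\psi\theta$, the Ritter--Segal theorem $\C(P)=1$, the reduction to subgroups of $P$ via faithfulness of $\rho|_C$), but as it stands it has three genuine gaps, two of which are fatal to the stated strategy.

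First, you systematically drop Schur indices. The multiplicity of the irreducible rational representation $\rho$ in $\Q[Q/H]$ is $\mu(\rho,\Q[Q/H])=\langle\rho,\Q[Q/H]\rangle/\langle\rho,\rho\rangle=\dim\chi^H/\schur(\chi)$, not $\dim\chi^H$; likewise $\dim\rho=\schur(\chi)[\Q(\chi):\Q][P:P_\psi]\dim\theta$ and $\dim\hat\pi=\schur(\pi)[\Q(\pi):\Q]\dim\pi$. For $2$-quasi-elementary groups these indices are genuinely nontrivial, and the target quantity $\dim\hat\psi\dim\hat\pi/\dim\rho$ carries the factor $\schur(\pi)/\schur(\tau)$ (this is visible in the paper's Theorem \ref{thm:innerprod}). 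With your conventions you would prove the formula for $\tr$-characters, i.e.\ a statement about $\CH(Q)$ rather than $\C(Q)$, and the two differ by exactly these factors (Lemma \ref{lem:schurnoschur}).

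Second, your upper bound is circular as sketched. You want to realise $n\rho$ inside $\Perm(Q)$ by starting from $\Ind_P^Q\hat\pi\in\Perm(Q)$ and ``subtracting off the unwanted constituents.'' But $\Res_C\Ind_P^Q\hat\pi=\dim\hat\pi\cdot\Q[C]$, so the unwanted constituents realise \emph{every} $C$-character and are for the most part genuinely new irreducible rational representations of $Q$ itself --- not inflations from proper quotients and not confined to larger inertia. Subtracting them off within $\Perm(Q)$ requires already knowing their orders in $\C(Q)$, which is the statement being proved; induction on $|P|$ does not break this circle since these constituents live on the same group $Q$. The paper avoids this entirely by invoking Berz's theorem (Theorem \ref{thm:Berz}), a nontrivial external result which says that for supersoluble $Q$ the order of $\rho$ in $\C(Q)$ \emph{equals} $\gcd_H\mu(\rho,\Q[Q/H])$; your plan amounts to reproving Berz's theorem by hand, and you have not done so.

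Third, the key arithmetic is asserted rather than proved. Your lower bound rests on the claim that every term $\langle\Res_{H'}\theta,\mathbf 1\rangle$ is a multiple of $n$ ``because $\hat\pi$ is minimal,'' and that a single minimal $\hat\pi$ governs the answer. In the paper this is the combination of Theorem \ref{thm:innerprod} (a careful Galois count, via Lemma \ref{etatheta}, showing $\mu(\rho,\Ind_P^Q\hat\pi_j)=\dim\hat\psi\dim\hat\pi_j/\dim\rho$ for \emph{every} rational constituent $\hat\pi_j$ of $\Res_P\rho$) together with Turull's observation that the relevant gcd equals the minimum because all quantities involved are powers of $p$, or $(p-1)$ times powers of $p$. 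Neither of these steps is routine bookkeeping, and your sketch does not supply a substitute for either.
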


Together with the aforementioned `$\Res\Ind$' formula, it gives a way to compute 
$\C(G)$ and $\CH(G)$ efficiently in a given finite group $G$. 
Incidentally, it also gives an algorithm to find
$\Perm(G) \subset R_\Q(G)$ without computing the subgroup lattice,
which is now implemented in Magma \cite{Magma}. 
In \S\ref{sGL2} and \S\ref{sPSL} we illustrate
applications of this approach to proving both triviality and non-triviality of
$\C(G)$, as we shall now describe.

In general, $\C(G)$ remains somewhat mysterious, especially in non-soluble
groups. Already Frobenius knew that $\C(A_n)$ is trivial for
all $n$. It was announced by Solomon in \cite{Sol} that $\C(\PSL_2(\F_q))$
is trivial for all prime powers $q$. In \S\ref{sGL2} we explain how this,
and the same statement for $\GL_2(\F_q)$ and $\PGL_2(\F_q)$, follow from
the results of \S\ref{sMackey} and \S\ref{sQE}.

There is, to our knowledge, no example in the literature of a simple
group with non-trivial $\C(G)$. In \S\ref{sPSL} we show:

\begin{theorem}[=Theorem \ref{pslmain} and Corollary \ref{cor:psl}]
The exponent of the 2-part of $\C(G)$ is unbounded in the families $G=\PSL_k(\F_p)$
and $G=\SL_k(\F_p)$. Moreover,
$\CH(\PSL_{k}(\F_p))\ne\{1\}$ for all even $k\geq 4$ and all odd primes $p$.
\end{theorem}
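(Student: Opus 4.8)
The whole argument rests on the injectivity of $\Res\colon\CH(G)\to\prod_{Q}\CH(Q)$ over the quasi-elementary $Q\le G$ (and likewise for $\C$) recorded in the introduction: for any class $x$ one has $\mathrm{ord}(x)=\mathrm{lcm}_Q\,\mathrm{ord}(\Res_Q x)$. Hence, to make the exponent of the $2$-part of $\C(G)$ large (resp.\ to prove $\CH(G)\ne\{1\}$) it is enough to name a single $2$-quasi-elementary subgroup $Q=C\rtimes P\le G$ and one virtual rational (resp.\ rationally-valued) character $x$ of $G$ such that $\Res_Q x$ has large $2$-power order in $\C(Q)$ (resp.\ $\CH(Q)$), which Theorem~\ref{thmqemain1} evaluates as $\frac{\dim\hat\psi\,\dim\hat\pi}{\dim\hat\rho}$. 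The natural way to manufacture such an $x$ is to induce: I would put $x=\Ind_Q^G\hat\rho$ for a well-chosen rational irreducible $\hat\rho$ of $Q$. This is automatically a genuine rational character of $G$, and the $\Res\Ind$ formula of \S\ref{sMackey} shows that the identity double coset in $\Res_Q\Ind_Q^G\hat\rho$ returns $\hat\rho$ itself; the task is then to pick $Q$ nearly self-normalising and to check that the remaining Mackey terms, coming from proper intersections $Q\cap Q^g$, do not cancel the $2$-part of the order of $\hat\rho$.

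Everything is thereby reduced to exhibiting, for each odd prime $p$ and arbitrarily large $k$, a $2$-quasi-elementary $Q=C\rtimes P\le\SL_k(\F_p)$ on which the ratio in Theorem~\ref{thmqemain1} is a power of $2$ growing with $k$. I would take $C$ to be the odd part of a maximal torus, chosen so that a faithful $\chi\in\Irr(C)$ has large degree $\dim\hat\psi=\phi(\mathrm{ord}\,\chi)$, and $P$ a $2$-group of block-monomial matrices normalising $C$ and growing with $k$: for $k=2m$ one can split $\F_p^{k}$ into $m$ planes carrying non-split tori $\F_{p^2}^\times$ and let $P$ combine the per-plane Frobenius inversions with a $2$-group of block permutations (e.g.\ a Sylow $2$-subgroup of $\Sigma_m$). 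The point that makes the ratio exceed $1$ --- and the reason the naive Singer-cycle choice, where the $P$-action on $\Irr(C)$ merely reproduces the cyclotomic Galois action, is \emph{trivial} --- is a Clifford twist: one works over $\chi$ together with a faithful character $\lambda$ of the centraliser of $C$ in $P$, so that the minimal rational constituent $\hat\pi$ of $\Res_P\hat\rho$ is forced to be a genuinely higher-dimensional (projective, quaternionic) rational irreducible of $P$. As the block $2$-group grows, $\dim\hat\pi$ grows as a power of $2$ while $\dim\hat\rho/\dim\hat\psi$ lags behind, and $\frac{\dim\hat\psi\,\dim\hat\pi}{\dim\hat\rho}$ becomes an unbounded $2$-power.

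For the $\CH$ statement one drops the Schur-index factor from $\dim\hat\rho$, so the same construction gives a strictly larger ratio and already a prototype $Q$ --- for instance a group of the shape $C_r\rtimes Q_8$, which needs two planes and hence $k\ge4$ --- yields an element of order $2$; this is why the $\CH$ conclusion is clean and holds for every even $k\ge4$ and every odd $p$. The constraints ``$k$ even'' and ``$k\ge4$'' enter exactly when descending from $\SL_k$ to $\PSL_k$: one needs the central involution $-I\in Z(\SL_k)$ (which forces $2\mid k$) so that the chosen character has trivial central character and descends, and one needs at least two planes to house the twist. The growth statement for both $\SL_k(\F_p)$ and $\PSL_k(\F_p)$ then follows from injectivity of $\Res$ once the local order at $Q$ is known.

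The main obstacle is not the homological bookkeeping but the construction together with its survival. One must produce $Q_k$ uniformly in $p$ and $k$, which ties the $2$-group theory of $P$ to the arithmetic of the available torus orders (primitive prime divisors of $p^{d}\pm1$, by Zsigmondy); and, for the $\C$ statement, one must control the Schur indices of both $\pi$ and the minimal $P$-constituent so that the Schur-index corrections do \emph{not} swallow the Galois/Clifford gap --- this is precisely where $\C$ and $\CH$ part ways. Finally one must verify, via the $\Res\Ind$ formula, that the Mackey correction terms do not lower the $2$-power order on passing from $\C(Q_k)$ back to $\C(\SL_k(\F_p))$ and down to $\C(\PSL_k(\F_p))$. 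Once a single such $Q_k$ with ratio growing in $k$ is in hand, the final conclusion is immediate.
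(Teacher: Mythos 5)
Your skeleton coincides with the paper's: choose one $2$-quasi-elementary $Q=C\rtimes P\le G$, take $x=\Ind_Q^G\rho$ for a rational irreducible $\rho$ of $Q$ whose order in $\CH(Q)$ is a known $2$-power (Proposition \ref{prop:mainresfaithful} / Theorem \ref{thmqemain1}), and bound below the order of $\Res_Qx$ by controlling the non-identity Mackey terms. But everything you defer is precisely where the content of the proof lies, and the construction you do sketch is both unverified and aimed at a claim stronger than what the paper's method yields. Your stated reduction --- ``exhibit, for each odd $p$ and arbitrarily large $k$, a $Q\le\SL_k(\F_p)$ on which the ratio of Theorem \ref{thmqemain1} is a $2$-power growing with $k$'' --- overshoots: in the paper the local order of $\rho$ in $\CH(Q)$ is $2^{\ord_2(p-1)}$, which does \emph{not} grow with $k$ at fixed $p$, and the final lower bound is only $2^{\min(\ord_2(k),\ord_2(p-1))}$; unboundedness in Corollary \ref{cor:psl} is obtained by letting $p$ vary as well, with $\ord_2(k)>\ord_2(p-1)$ chosen so that Turull's theorem \cite{TurSchur} kills all Schur indices and $\CH$-orders become $\C$-orders. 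You assert growth with $k$ at fixed $p$ for your ``$m$ planes of non-split tori $\rtimes$ Sylow-$2$ of $\Sigma_m$'' group without computing $\dim\hat\psi$, $\dim\hat\pi$ or $\dim\rho$ for it, and nothing supports that assertion. Note also that the paper's $C$ crucially contains a transvection $c_p$ of order $p$ on which $P$ acts through a quotient of order $2^{\ord_2(p-1)}$; a purely toral $C$ as in your sketch does not obviously produce the required non-trivial order via Proposition \ref{prop:mainresfaithful}.

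The step you postpone to the last sentence --- ``verify, via the $\Res\Ind$ formula, that the Mackey correction terms do not lower the $2$-power order'' --- is not a final check but the bulk of the argument. In the paper it is the evaluation, through Corollary \ref{cor:muindres} with $Q_1=Q_2=Q$, of the $2$-adic valuation of $\sum_{[U]}S(U)$ over conjugacy classes of cyclic subgroups of $G$: this requires the explicit normaliser orders $|N_G(U)|$ inside $\PSL_k(\F_p)$, a Zsigmondy prime $l\mid p^{k-2}-1$ (Lemma \ref{lem:Zsigmondy}) to pin down $N_G(C_l)$, the computation of $[\Q(\tau):\Q]$, and a pairing of $S(U)$ with $S(UC_2)$ to show every contribution other than $S(C_{lp})+S(C_{2lp})$ has strictly larger valuation. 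Without some version of this computation you have no lower bound at all on the order of $\Ind_Q^G\rho$ in $\CH(G)$, since the identity double coset alone proves nothing once other terms can interfere. Finally, your $\SL$-versus-$\PSL$ reduction runs backwards: Lemma \ref{lem:lifts} gives an injection $\C(\PSL_k(\F_p))\injects\C(\SL_k(\F_p))$, so the paper constructs the element in $\PSL_k(\F_p)$ and gets $\SL_k(\F_p)$ for free, whereas descending from $\SL_k$ to $\PSL_k$ needs the additional (unproved) claim that your virtual character is trivial on the centre. The constraints $k$ even and $k\ge4$ come from needing $\ord_2(k)\ge1$ and from the $(1,1,k-2)$ block structure with $k-2\ge2$, not primarily from the presence of $-I$.
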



%

\subsection*{Notation}

Throughout the paper, $G$ denotes a finite group. We write

\smallskip

\begin{tabular}{llllll}
  $\Char(G)$     & = & the character ring $G$, \cr
  $\Char_\Q(G)$  & = & the ring of $\Q$-valued characters, \cr
  $R_{\Q}(G)$    & = & the ring of characters of virtual $\Q G$-representations, \cr
  $\Perm(G)$     & = & the ring of characters of virtual permutation representations, \cr
  $\C(G)$        & = & ${R_\Q(G)}/{\Perm(G)}$, \cr
  $\CH(G)$       & = & ${\Char_\Q(G)}/{\Perm(G)}$, \cr
  $\Q(\chi)$     & = & the field of character values of a complex character $\chi$,\cr
  $\schur(\chi)$ & = & the Schur index of an irreducible complex character $\chi$ over $\Q(\chi)$.\\[2pt]
\end{tabular}

\noindent
For a complex character $\chi$ of $G$, define its \emph{trace} and,
when $\chi$ is irreducible, its \emph{rational hull} as
$$
\begin{array}{clllllllllll}
\tr\chi &=& \displaystyle \sum_{\scriptscriptstyle \sigma\in \Gal(\Q(\chi)/\Q)}\chi^{\sigma}&&\in \Char_\Q(G),  \\[5pt]
\hat{\chi} &=& \schur(\chi)\tr\chi &&\in R_\Q(G).\cr
\end{array}
$$
If $\chi$ is irreducible, then $\tr\chi$ is a \emph{$\Q$-irreducible character} and
$\hat{\chi}$ is the character of an \emph{irreducible rational representation}. We write

\smallskip

\begin{tabular}{llllll}
  $\Irr(G)$    & = & the set of (complex) irreducible characters of $G$, \cr
  $\Irr_\Q(G)$ & = & the set of $\Q$-irreducible characters of $G$, \cr
  $\mu(\alpha,\beta)$ & = & $\frac{\langle\alpha,\beta\rangle}{\langle\alpha,\alpha\rangle}$ = 
    multiplicity of $\alpha$ in $\beta$, \cr
  && used for characters $\alpha\in\Irr_\Q(G), \beta\in \Char_\Q(G)$, and\cr
  && also for rational representations $\alpha,\beta$ with $\alpha$ irreducible.\\[2pt]
\end{tabular}

\noindent
We write $x\sim y$ for conjugate elements. A $p$-quasi-elementary group is one of the form 
$G=C\rtimes P$ with $C$ cyclic, and $P$ a $p$-group; throughout the paper we adopt the 
convention that $p\nmid |C|$.

\begin{acknowledgements}
The first author is supported by a Research Fellowship from the Royal 
Commission for the Exhibition of 1851, and the second author is supported by a Royal Society
University Research Fellowship. We would like to thank Alexandre Turull 
for his help with Corollary \ref{cor:psl}. We are grateful to an
anonymous referee for a careful reading of the manuscript and many helpful
comments.
\end{acknowledgements}

%
%
%

\section{Basic facts}
\label{sBasic}

\begin{lemma}\label{lem:lifts}
An inclusion $N\normal G$ induces injections $\C(G/N)\injects \C(G)$,
$\CH(G/N)\injects \CH(G)$.
\end{lemma}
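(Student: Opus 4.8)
The plan is to realise the two maps in the statement as the maps induced by inflation, and then to split off an explicit additive retraction. Write $\operatorname{Inf}$ for the inflation ring homomorphism along $G\to G/N$. The inflation of the permutation character of a $G/N$-set is the permutation character of the same set regarded as a $G$-set, the inflation of a $\Q G/N$-representation is a $\Q G$-representation, and inflation visibly preserves $\Q$-valued characters; hence $\operatorname{Inf}$ carries $\Perm(G/N)$, $R_\Q(G/N)$ and $\Char_\Q(G/N)$ into the corresponding rings for $G$ and so induces the maps $\C(G/N)\to\C(G)$ and $\CH(G/N)\to\CH(G)$. To prove these are injective it suffices to exhibit, for each ring, an additive map $\phi$ in the opposite direction that sends permutation characters to permutation characters and satisfies $\phi\circ\operatorname{Inf}=\mathrm{id}$.

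For $\phi$ I would use passage to $N$-invariants. The element $e_N=\frac1{|N|}\sum_{n\in N}n$ of $\Q G$ is idempotent, and it is central precisely because $N\normal G$; thus for any representation $V$ of $G$ the subspace $e_N V=V^N$ is a direct summand on which $N$ acts trivially, i.e.\ a representation of $G/N$. On characters this is the averaging operation $\phi(\chi)(g)=\frac1{|N|}\sum_{n\in N}\chi(gn)$, which is manifestly additive in $\chi$ and, being cut out by the idempotent $e_N$, sends genuine characters to genuine characters. Since $e_N$ lies in $\Q G$, the summand $V^N$ is defined over $\Q$ whenever $V$ is, so $\phi$ maps $R_\Q(G)$ into $R_\Q(G/N)$; and its values are $\Q$-linear combinations of values of $\chi$, so it maps $\Char_\Q(G)$ into $\Char_\Q(G/N)$. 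Finally $\phi\circ\operatorname{Inf}=\mathrm{id}$, because the inflation of $\beta$ is constant on each $N$-coset with value $\beta$, so averaging it over a coset returns $\beta$.

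The one step with genuine content is that $\phi$ preserves permutation characters. Given a $G$-set $X$ with permutation character $\pi_X$, the invariant subspace of its permutation module has the $N$-orbit sums as a basis, and $G$ permutes these basis vectors exactly as $G/N$ permutes the orbit set $N\backslash X$; hence $\phi(\pi_X)$ is the permutation character $\pi_{N\backslash X}$ of the $G/N$-set $N\backslash X$, which lies in $\Perm(G/N)$. By additivity $\phi$ therefore maps $\Perm(G)$ into $\Perm(G/N)$.

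These properties finish the proof at once: if $\alpha\in R_\Q(G/N)$ satisfies $\operatorname{Inf}(\alpha)\in\Perm(G)$, then $\alpha=\phi(\operatorname{Inf}(\alpha))\in\Perm(G/N)$, so the induced map on $\C$ is injective, and the identical argument with $\Char_\Q$ in place of $R_\Q$ settles $\CH$. I expect the orbit-sum identification of the $N$-invariants of a permutation module with the permutation module of $N\backslash X$ to be the crux; the additivity, the rationality bookkeeping, and the centrality of $e_N$ are all routine.
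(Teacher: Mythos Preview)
Your argument is correct and is essentially the same as the paper's: both take $N$-invariants as a retraction to inflation, the key computation being that the $N$-invariants of a permutation module $\bC[G/H]$ are the permutation module $\bC[G/NH]\cong\bC[(G/N)/(NH/N)]$, which is exactly your orbit-sum identification specialised to transitive $G$-sets. The paper simply states this directly for each transitive summand, while you package it as an additive retraction and spell out the rationality bookkeeping, but the mathematical content is identical.
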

\begin{proof}
Suppose $\bar\rho$ is a representation of $G/N$, which lifts to $\rho\in\Perm G$. Write
$$
  \rho = \bigoplus \bC[G/H_i]^{\oplus n_i}, \qquad n_i\in\Z.
$$
For a subgroup $H\<G$ recall that
$
  \bC[G/H]^N \iso \bC[G/NH],
$
as $G$-representations (see e.g. \cite{tamroot}, proof of Thm. 2.8(5)). 
Therefore,
$$
  \bar\rho=\rho^N = \bigoplus \bC[G/NH_i]^{\oplus n_i}\in \Perm(G/N),
$$
as required.
\end{proof}

\begin{lemma}\label{lem:schurnoschur}
Let $\rho$ be an irreducible rational representation and $\tau\in\Irr G$ its 
constituent, so $\tr\tau\in \Irr_\Q(G)$ and $\rho=\schur(\tau)\tr\tau$. 
The order of $\tr\tau$ in $\CH(G)$ is $\schur(\tau)$ times the order of
$\rho$ in $\C(G)$.
\end{lemma}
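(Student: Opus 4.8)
The plan is to translate both orders into the language of sublattices of $\Char_\Q(G)$ and then reduce the whole statement to a single divisibility. Write $m=\schur(\tau)$, so that $\rho=m\,\tr\tau$ as elements of $R_\Q(G)\subseteq\Char_\Q(G)$. Since $\Perm(G)$ is a subgroup of the abelian group $\Char_\Q(G)$, the set $\{n\in\Z : n\,\tr\tau\in\Perm(G)\}$ is a subgroup of $\Z$; by Artin's induction theorem it is nonzero, so it equals $a\Z$, where $a$ is precisely the order of $\tr\tau$ in $\CH(G)$. The order $b$ of $\rho$ in $\C(G)$ is then the least positive $n$ with $nm\,\tr\tau\in\Perm(G)$, i.e. the least $n$ with $nm\in a\Z$, which is $b=a/\gcd(a,m)$.

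With this in hand, the claimed identity $a=mb$ is equivalent to $\gcd(a,m)=m$, that is, to the divisibility $m\mid a$. This single divisibility is the whole content of the lemma, and proving it is the key step. To establish it I would use the description of $R_\Q(G)$ as a sublattice of $\Char_\Q(G)$: the $\Q$-irreducible characters $\tr\sigma$ (one per Galois orbit of $\Irr(G)$) form a $\Z$-basis of $\Char_\Q(G)$, while the characters $\hat\sigma=\schur(\sigma)\,\tr\sigma$ of the irreducible rational representations form a $\Z$-basis of $R_\Q(G)$. Hence
$$
  R_\Q(G)=\bigoplus_{\sigma}\Z\,\schur(\sigma)\,\tr\sigma \subseteq \bigoplus_{\sigma}\Z\,\tr\sigma=\Char_\Q(G),
$$
so every element of $R_\Q(G)$, and in particular every element of $\Perm(G)\subseteq R_\Q(G)$, has its $\tr\tau$-coordinate divisible by $\schur(\tau)$. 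Applying this to $a\,\tr\tau\in\Perm(G)$, whose $\tr\tau$-coordinate is $a$, yields $m=\schur(\tau)\mid a$, as required.

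The main obstacle is conceptual rather than computational: everything rests on the identification $R_\Q(G)=\bigoplus_\sigma \Z\,\schur(\sigma)\,\tr\sigma$, i.e. on the fact that the multiplicity of a $\Q$-irreducible $\tr\tau$ in the character of any genuine $\Q G$-representation is forced to be a multiple of the Schur index $\schur(\tau)$. This is exactly the defining property of the Schur index, so once it is quoted the remaining steps are just the elementary arithmetic of cyclic subgroups of $\Z$ carried out above. The only point requiring a little care is to note that $a$ is finite, which is guaranteed by the bound on the exponent of $\CH(G)$, so that both orders are genuinely well-defined positive integers.
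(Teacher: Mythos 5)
Your proof is correct and is essentially the paper's argument spelled out: the paper dismisses the lemma with ``clear from the definitions,'' and your unwinding correctly identifies the one non-formal ingredient, namely that $\schur(\tau)$ divides the order $a$ of $\tr\tau$ in $\CH(G)$, which you rightly deduce from $\Perm(G)\subseteq R_\Q(G)=\bigoplus_\sigma\Z\,\schur(\sigma)\tr\sigma$. Nothing further is needed.
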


\begin{proof}
Clear from the definitions of $\C(G)$ and $\CH(G)$.
\end{proof}

This allows us to immediately deduce results about $\CH(G)$ from those about $\C(G)$,
and conversely.

\subsection*{Nilpotent groups}

Some statements seem to have a cleaner formulation for $\C(G)$
than for $\CH(G)$, and for some it is the other way around. Let us briefly
illustrate this with an example of nilpotent groups:

\begin{theorem}[Rasmussen \cite{Ras} Thm 5.2]
Let $G=G_2\times G_{2'}$ be a nilpotent group, where $G_2$ is its Sylow 2-subgroup.
Then $\C(G)$ is trivial, unless $G_{2'}\neq \{1\}$ and there exists an
irreducible character $\chi$ of $G_2$ with $\schur(\chi)=2$ and such that
one of the following holds:
\begin{enumerate}
\item $\Q(\chi)\neq \Q$, or
\item $\Q(\chi)= \Q$ and there exists a prime divisor $q$ of $|G|$ such that the order
of 2 $\pmod q$ is even.
\end{enumerate}
\end{theorem}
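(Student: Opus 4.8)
The plan is to reduce everything to the structure theorem for $p$-quasi-elementary groups (Theorem~\ref{thmqemain1}), combined with the two functorial inputs already available: the injectivity of restriction $\C(G)\hookrightarrow\prod_Q\C(Q)$ over quasi-elementary $Q\leq G$, and Lemma~\ref{lem:lifts}. Since $G=G_2\times G_{2'}$ is nilpotent, its Sylow subgroups commute, so every quasi-elementary subgroup is an \emph{honestly} direct product $Q=C\times P$, with $P\leq G_p$ for some prime $p$ and $C$ cyclic of order prime to $p$. I would first make Theorem~\ref{thmqemain1} explicit in this case. Writing an irreducible rational representation of $Q$ as $\rho=\widehat{\psi\otimes\pi}$ with $\psi\in\Irr(C)$ linear and $\pi\in\Irr(P)$, the unique rational constituent of $\Res_C\rho$ is $\hat\psi=\tr\psi$ and that of $\Res_P\rho$ is $\hat\pi=\schur(\pi)\tr\pi$. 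Computing dimensions and using that $\Q(\psi)\subseteq\Q(\zeta_{|C|})$ and $\Q(\pi)\subseteq\Q(\zeta_{|P|})$ are linearly disjoint over $\Q$ (as $|C|$ and $|P|$ are coprime), the formula collapses to the statement that the order of $\rho$ in $\C(Q)$ equals $\schur(\pi)/\schur(\psi\otimes\pi)$.

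By Roquette's theorem, irreducible characters of $p$-groups with $p$ odd have Schur index $1$, so this order is $1$ whenever $P\leq G_p$ with $p$ odd; thus only $p=2$ can contribute, which is the source of the distinguished role of $G_2$. For $p=2$ one has $\schur(\pi)\in\{1,2\}$, and the order exceeds $1$ exactly when $\schur(\pi)=2$ while $\schur(\psi\otimes\pi)=1$, i.e. when the quaternion division algebra $D_\pi$ over $\Q(\pi)$ is split by the cyclotomic extension $\Q(\pi,\psi)=\Q(\pi,\zeta_{|C|})$. So the whole theorem becomes a question about which such splittings can occur for $|C|$ odd and dividing $|G_{2'}|$.

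Then I would carry out the arithmetic of this splitting. For a character $\chi$ of a $2$-group with $\schur(\chi)=2$, the local Schur indices vanish away from the real and the dyadic places, so $D_\chi$ is ramified only at (some of) the infinite and the $2$-adic places of $\Q(\chi)$. Adjoining $\zeta_{|C|}$ with $|C|>1$ odd makes the field totally complex, killing all infinite ramification, while at the dyadic places it multiplies the local degree by the order of $2$ modulo $|C|$. The decisive structural fact I expect to need is the dichotomy that $D_\chi$ is ramified at a dyadic place \emph{if and only if} $\Q(\chi)=\Q$. Granting this, the two cases fall out: if $\Q(\chi)\neq\Q$ then $D_\chi$ is ramified only at real places, so adjoining $\zeta_q$ for \emph{any} odd $q\mid|G_{2'}|$ (which exists once $G_{2'}\neq\{1\}$) splits it, giving case~(1); if $\Q(\chi)=\Q$ one must in addition split the dyadic place, which needs an odd $q\mid|G|$ of even local degree at $2$, i.e. with $2$ of even order modulo $q$, giving case~(2).

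Finally, the two implications. For triviality, when neither condition holds every quasi-elementary $Q$ has $\C(Q)=\{1\}$ by the above (for rational $\chi$ and all relevant $q$ of odd order, the order of $2$ modulo $|C|$ stays odd, so the dyadic place never splits), whence $\C(G)=\{1\}$ by injectivity of restriction. For non-triviality I would avoid surjectivity of induction and instead produce the class via Lemma~\ref{lem:lifts}: taking $P=G_2$, $\pi=\chi$ and $C=C_q$ gives a quasi-elementary $Q=G_2\times C_q$ with $\C(Q)\neq\{1\}$; since $C_q$ is a quotient of the Sylow subgroup $G_q$ and $G_2\times G_q=G/G_{q'}$ is a quotient of $G$, two applications of Lemma~\ref{lem:lifts} embed $\C(Q)\hookrightarrow\C(G_2\times G_q)\hookrightarrow\C(G)$, forcing $\C(G)\neq\{1\}$. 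The main obstacle is the arithmetic heart of the third paragraph: establishing that for $2$-group characters of Schur index $2$ the division algebra is supported only at the infinite and dyadic places, and that dyadic ramification occurs precisely for rational $\chi$. This is a computation with local Schur indices, and it is where the genuine content of conditions (1) and (2) — in particular the appearance of the order of $2$ modulo $q$ — actually resides.
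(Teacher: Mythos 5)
The paper itself gives no proof of this statement --- it is quoted from Rasmussen --- so your argument has to stand on its own; as a derivation from Theorem \ref{thmqemain1} it is an attractive route, and most of it is sound. The reduction of the order formula to $\schur(\pi)/\schur(\psi\otimes\pi)$ for $Q=C\times P$ is correct (linear disjointness of $\Q(\psi)$ and $\Q(\pi)$ gives $[\Q(\psi\otimes\pi):\Q]=[\Q(\psi):\Q]\,[\Q(\pi):\Q]$ and everything cancels), the non-triviality direction via Lemma \ref{lem:lifts} works, and the dichotomy you flag as the ``main obstacle'' is true and not hard: for a $2$-group character $\chi$ with $\schur(\chi)=2$ the algebra $D_\chi$ is unramified at odd primes, $\Q(\chi)$ is a subfield of a $2$-power cyclotomic field (hence of $2$-power degree with a unique dyadic place), the Frobenius--Schur indicator must be $-1$ (otherwise only the single dyadic place could ramify, contradicting the parity of the number of ramified places), so all $[\Q(\chi):\Q]$ real places ramify and the dyadic place ramifies exactly when $[\Q(\chi):\Q]$ is odd, i.e.\ when $\Q(\chi)=\Q$.

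The genuine gap is in the triviality direction. You claim that when neither condition holds, \emph{every} quasi-elementary subgroup $Q=C\times P$ has $\C(Q)=\{1\}$. This is false: the hypotheses of the theorem constrain only $\Irr(G_2)$, while your $P$ ranges over all subgroups of $G_2$, and Schur indices are not monotone under passage to subgroups. Concretely, let $G_2$ be the central product $Q_8\ast C_4$ of order $16$: its two faithful irreducible characters have character field $\Q(i)$ and Schur index $1$, all other irreducibles are linear, so $G_2$ has no character with Schur index $2$ and the theorem asserts $\C(G)=\{1\}$ for $G=G_2\times C_5$; yet $G$ contains the quasi-elementary subgroup $Q_8\times C_5$, whose $\C$ has order $2$ by your own formula (the Hamilton quaternions are split by $\Q(\zeta_5)$, since $2$ has order $4$ mod $5$). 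So injectivity of restriction gives you nothing in this case. The repair is to apply Corollary \ref{mainreduction} with the family of \emph{maximal} quasi-elementary subgroups, as the remark following it suggests: for nilpotent $G$ these are exactly $C\times G_p$ with $C\le G_{p'}$ cyclic, so only characters of the full Sylow subgroup $G_2$ enter, and your splitting analysis then does close the triviality direction (using also that if the order of $2$ mod $q$ is odd for every prime $q\mid|G|$, then the order of $2$ mod $d$ is odd for every $d\mid|G_{2'}|$). Alternatively one can bypass quasi-elementary subgroups altogether and apply Berz's Theorem \ref{thm:Berz} to $G$ itself, which is essentially what the second Rasmussen theorem quoted in the paper, combined with Lemma \ref{lem:schurnoschur}, encodes.
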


The conditions turn out to be much simpler if one transforms this into a result
about $\CH(G)$. The following follows easily from \cite[Thm. 4.2]{Ras}
and standard facts about Schur indices:



\begin{theorem}
Let $\chi=\chi_2\chi_{2'}$ be an irreducible character of a nilpotent group
$G=G_2 \times G_{2'}$ as above. Then the order of $\tr\chi$ in $\CH(G)$ is 
$\schur(\chi_2)$ (which is 1 or~2).
\end{theorem}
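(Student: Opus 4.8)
The plan is to reduce everything to the Sylow 2-subgroup using the product structure. The key point is that for a nilpotent group $G = G_2 \times G_{2'}$, an irreducible character factors as $\chi = \chi_2 \chi_{2'}$, and I want to compute the order of $\tr\chi$ in $\CH(G)$. First I would invoke Lemma \ref{lem:schurnoschur}, which tells me that the order of $\tr\chi$ in $\CH(G)$ equals $\schur(\chi)$ times the order of $\hat\chi = \schur(\chi)\tr\chi$ in $\C(G)$. So the statement is equivalent to showing that the order of $\hat\chi$ in $\C(G)$ is $\schur(\chi_2)/\schur(\chi)$. I would then use the standard fact that for a direct product $\schur(\chi) = \schur(\chi_2\chi_{2'})$ relates to $\schur(\chi_2)$ and $\schur(\chi_{2'})$; since $G_{2'}$ has odd order, a theorem of Roquette (or Fong--Swan type results) guarantees $\schur(\chi_{2'}) = 1$, and more precisely the Schur index over $\Q$ of a character of an odd-order group is $1$. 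The main arithmetic input is then to relate $\schur(\chi)$ to $\schur(\chi_2)$, using that the only prime where a Schur index can be nontrivial for a nilpotent group is $2$, and that $\schur(\chi_2) \in \{1,2\}$ by Roquette's theorem on the Schur indices of characters of $2$-groups.

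Next, I would translate Rasmussen's Theorem 5.2 (quoted above) into the $\CH$ language. That theorem describes precisely when $\C(G)$ is nontrivial in terms of conditions (1) and (2) on $\chi_2$. The crucial observation is that these two conditions, involving $\Q(\chi) \neq \Q$ versus the order of $2 \pmod q$, are exactly the conditions that govern whether the Schur index $\schur(\chi_2)$ survives or collapses when passing from $\C$ to $\CH$. Concretely, I expect that when $\schur(\chi_2) = 2$, the order of $\hat\chi$ in $\C(G)$ is $2/\schur(\chi)$, and when combined via Lemma \ref{lem:schurnoschur} this yields order $\schur(\chi_2)$ for $\tr\chi$ in $\CH(G)$ uniformly, regardless of which of Rasmussen's cases obtains. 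The cited \cite[Thm. 4.2]{Ras} presumably computes the order of $\hat\chi$ in $\C(G)$ directly, so I would extract from it the exact value and then apply Lemma \ref{lem:schurnoschur} mechanically.

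The cleanest route is therefore: establish $\schur(\chi_{2'}) = 1$ so that all Schur-index contributions come from the $2$-part; cite \cite[Thm. 4.2]{Ras} for the order of $\hat\chi$ in $\C(G)$; and then observe that multiplying by $\schur(\chi)$ as in Lemma \ref{lem:schurnoschur} makes the case distinction disappear, leaving the clean answer $\schur(\chi_2)$. The point of the theorem is precisely that the $\CH$ formulation absorbs the delicate arithmetic of Rasmussen's conditions (1) and (2) into a single Schur index, so most of the work is bookkeeping rather than new computation.

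The hard part will be verifying that the case distinction in Rasmussen's theorem really does collapse after multiplying by $\schur(\chi)$, i.e.\ confirming that in every subcase the product $\schur(\chi) \times (\text{order of }\hat\chi\text{ in }\C(G))$ equals $\schur(\chi_2)$. This requires knowing the precise interaction between the rationality field $\Q(\chi)$, the Schur index over $\Q(\chi)$, and the order in $\C(G)$ as computed in \cite[Thm. 4.2]{Ras}; in particular one must track how $\schur(\chi)$ (the Schur index over $\Q(\chi)$) compares to $\schur(\chi_2)$ in both of Rasmussen's cases. I expect this to be the single genuinely arithmetic step, resting on standard facts about Schur indices of nilpotent groups — that they are supported at the prime $2$ and bounded by $2$ — which is why the theorem is asserted to follow \emph{easily} once these facts are in hand.
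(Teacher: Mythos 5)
Your proposal is correct and follows essentially the same route as the paper, which offers no argument beyond the remark that the theorem ``follows easily from \cite[Thm. 4.2]{Ras} and standard facts about Schur indices'': you assemble exactly these ingredients, using Lemma \ref{lem:schurnoschur} as the bridge between $\C(G)$ and $\CH(G)$ and Roquette's bounds on Schur indices of $2$-groups and odd-order groups. The one simplification worth noting is that Theorem \ref{thm:Berz} identifies the order of $\tr\chi$ in $\CH(G)$ directly with $\gcd_{H\le G}\langle\chi,\Ind_H^G\triv\rangle$, which is precisely the quantity Rasmussen computes, so the case-by-case matching of $\schur(\chi)$ against $\schur(\chi_2)$ that you defer to the end (the interaction of $\Q(\chi_{2'})$ with the quaternion algebra of $\chi_2$ at the place $2$ and at infinity, i.e.\ Rasmussen's conditions (1) and (2)) can be bypassed entirely.
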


\subsection*{Metabelian and supersolvable groups}

The following theorem will be of central importance in what follows. It implies that knowing 
the order of every $\Q$-irreducible representation in $\CH(G)$ determines the structure 
of $\CH(G)$ completely when $G$ is metabelian or supersoluble (e.g. nilpotent or 
quasi-elementary). It does not hold in arbitrary groups, as first noted by Berz \cite{Berz};
the smallest counterexample is $G=C_3\times\SL_2(\F_3)$.

\begin{theorem}[Berz \cite{Berz}]\label{thm:Berz}
If $G$ is metabelian or supersoluble, 
then $\Perm(G)\subseteq R_\Q(G)$ is freely generated by
$n_\rho\rho$, as $\rho$ ranges over irreducible rational representations of $G$, and
$$
  n_\rho =\gcd_{H\le G} \mu(\rho,\Q[G/H]).
$$
\end{theorem}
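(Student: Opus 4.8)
The plan is to reduce the whole statement to the single assertion that $n_\rho\rho\in\Perm(G)$ for every irreducible rational representation $\rho$, and then to prove that by induction on $|G|$, using that metabelian and supersoluble groups are monomial. First I would dispose of the easy inclusion: since $R_\Q(G)=\bigoplus_\rho\Z\rho$ is free on the irreducible rational representations, and every generator $\Q[G/H]$ of $\Perm(G)$ has $\rho$-coordinate $\mu(\rho,\Q[G/H])$, every element of $\Perm(G)$ has $\rho$-coordinate divisible by $n_\rho=\gcd_{H\le G}\mu(\rho,\Q[G/H])$. Hence $\Perm(G)\subseteq\bigoplus_\rho n_\rho\Z\rho=:L$. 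Conversely, once I know $n_\rho\rho\in\Perm(G)$ for each $\rho$, I obtain $L\subseteq\Perm(G)$, so $\Perm(G)=L$ is free on the $n_\rho\rho$; this is exactly the theorem, and it shows en passant that $[\rho]$ has order precisely $n_\rho$ in $\C(G)$ and that $\C(G)$ splits as the direct sum of the corresponding cyclic groups.

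The engine for the hard inclusion combines monomiality with the fully explicit cyclic case. Write an irreducible complex constituent $\chi$ of $\rho$ as $\chi=\Ind_H^G\lambda$ with $\lambda$ a linear character of a subgroup $H$ (possible since the class is monomial and is closed under subgroups and quotients), and let $\delta=\tr\lambda$ be the Schur-index-$1$ rational representation of $H$ inflated from the faithful rational representation of the cyclic quotient $H/\ker\lambda$. On the one hand, the cyclic base case, proved by M\"obius inversion over the divisor lattice, writes the faithful rational representation of a cyclic group as a $\Z$-combination of its permutation modules; inflating and inducing gives $\Ind_H^G\delta=\sum_i a_i\,\Q[G/H_i]\in\Perm(G)$. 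On the other hand, since $\Gal$ commutes with induction, $\Ind_H^G\tr\lambda=\sum_{\sigma\in\Gal(\Q(\lambda)/\Q)}\chi^{\sigma}=[\Q(\lambda):\Q(\chi)]\,\tr\chi$, a positive integer multiple $c\rho$ of $\rho$ with $c=[\Q(\lambda):\Q(\chi)]/\schur(\chi)$. Thus $c\rho\in\Perm(G)$, so $[\rho]$ has finite order dividing $c$ in $\C(G)$. For a non-faithful $\rho$, i.e. $\ker\chi\ne 1$, I would instead run the argument through the proper quotient $G/\ker\chi$ and lift back using Lemma \ref{lem:lifts}, which carries permutation modules along $G\to G/\ker\chi$; this lets me assume $\chi$ faithful and take the monomial subgroup $H$ as tight as possible.

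The main obstacle is to pin the order of $[\rho]$ down to the exact value $n_\rho$, rather than to some multiple $c$ coming from one particular monomial presentation. Equivalently, I must show that the order of $[\rho]$ divides $\mu(\rho,\Q[G/H])$ for \emph{every} subgroup $H$ — that is, that the $\rho$-isotypic part $\mu(\rho,\Q[G/H])\,\rho$ of each permutation module is itself a virtual permutation module — so that the order divides the gcd $n_\rho$. Combined with the easy inclusion (which forces the order to be a multiple of $n_\rho$) this yields order $=n_\rho$ and, simultaneously, the diagonality of $\Perm(G)$ in the $\rho$-basis. This is precisely where the metabelian/supersoluble hypothesis is indispensable: it is the failure of monomiality for the $2$-dimensional constituents of $C_3\times\SL_2(\F_3)$ that destroys diagonality there.

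I expect the technical heart to be an inductive bookkeeping argument realizing each isotypic piece $\mu(\rho,\Q[G/H])\,\rho$ as a virtual permutation module: processing the irreducible rational representations in increasing order of $|\ker|$, and peeling off at each stage the inflated cyclic-faithful contributions supplied by the construction of the previous paragraph, with the metabelianity or supersolubility guaranteeing that the relevant constituent is genuinely monomial and induced from a subgroup on which the cyclic base case applies. Making this peeling both terminate and respect the integrality needed to hit the exact gcd, uniformly over all $H$, is the step I would expect to cost the most work.
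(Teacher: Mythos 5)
First, a point of comparison: the paper does not prove this statement at all --- it is quoted from Berz's thesis \cite{Berz} as a black box (and is then used as the engine behind Lemma \ref{A:Cp} and Theorem \ref{thmqemain1}). So your proposal is being measured against the actual content of Berz's theorem rather than against an argument in the text.

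Your reduction is correct as far as it goes: $\Perm(G)\subseteq\bigoplus_\rho n_\rho\Z\rho$ is immediate since $n_\rho$ divides every coordinate $\mu(\rho,\Q[G/H])$, and the whole theorem would follow from the single claim that $n_\rho\rho\in\Perm(G)$ for each $\rho$, equivalently that the order of $[\rho]$ in $\C(G)$ divides $\mu(\rho,\Q[G/H])$ for \emph{every} subgroup $H$. The genuine gap is that you never prove this claim. Your monomial-induction construction produces one element $c\rho\in\Perm(G)$ with $c=[\Q(\lambda):\Q(\chi)]/\schur(\chi)$ coming from one particular monomial presentation $\chi=\Ind_H^G\lambda$; this bounds the order of $[\rho]$ by $c$, but there is no argument that $c$ divides $n_\rho$, and in general it does not --- $c$ can be a strictly larger multiple, and different monomial presentations give different $c$'s whose gcd need not be $n_\rho$ either. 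The entire content of Berz's theorem is the \emph{diagonality} statement that the $\rho$-isotypic part of each individual $\Q[G/H]$ already lies in $\Perm(G)$; that is exactly the step you defer to ``an inductive bookkeeping argument'' that you ``expect'' to work. Since this is precisely where the metabelian/supersoluble hypothesis must enter (monomiality alone is not enough: $C_3\times\SL_2(\F_3)$ is soluble, every irreducible rational representation has \emph{some} multiple in $\Perm(G)$, and yet diagonality fails), deferring it means the proof of the theorem is missing, not merely unfinished in detail. What you have actually established is only the much weaker fact, already contained in Artin induction, that each $[\rho]$ has finite order in $\C(G)$.
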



\begin{lemma}       
\label{A:Cp}
If $G=A\rtimes V$ with $A$ abelian and $V$ an elementary abelian
$p$-group, then $\CH(G)=\{1\}$.
\end{lemma}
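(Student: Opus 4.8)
The plan is to use Clifford theory over the abelian normal subgroup $A$ to write every $\Q$-irreducible character as an induced-from-linear character, and then to build the required permutation characters out of cyclic quotients. Since $\Perm(G)\subseteq R_\Q(G)$, showing $\tr\chi\in\Perm(G)$ for every $\chi\in\Irr(G)$ simultaneously forces $\schur(\chi)=1$ and kills the class of $\tr\chi$, so this single goal yields $\CH(G)=\{1\}$. To set up the monomial description I would fix $\mu\in\Irr(A)$, put $W=\Stab_V(\mu)\leq V$ and $I=A\rtimes W$, and note that $\mu$ extends to a linear character of $I$: it is trivial on $[A,W]$, and $W$ acts trivially on $A/[A,W]$, so $I/[A,W]\cong(A/[A,W])\times W$ is abelian. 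Thus every $\chi$ has the form $\chi=\Ind_I^G\lambda$ with $\lambda$ linear, and it suffices to treat such $\chi$.

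The one clean input I would isolate is that for any group $H$ and any linear character $\lambda$, the rational character $\tr\lambda$ lies in $\Perm(H)$: indeed $\lambda$ factors through a faithful character of the cyclic quotient $H/\ker\lambda\cong C_m$, $\tr\lambda$ is inflated from $\tr$ of that faithful character, and $\CH(C_m)=\{1\}$ for cyclic groups, while inflation sends permutation characters to permutation characters. Inducing gives $\Ind_I^G(\tr\lambda)\in\Perm(G)$, and since induction commutes with the Galois action and $\Q(\chi)\subseteq\Q(\lambda)$, a short count yields $\Ind_I^G(\tr\lambda)=[\Q(\lambda):\Q(\chi)]\cdot\tr\chi$. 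Hence the order of $\tr\chi$ in $\CH(G)$ divides the field-descent index $e=[\Q(\lambda):\Q(\chi)]$, a power of $p$, because every Galois automorphism fixing $\Q(\chi)$ but moving $\lambda$ is realised by conjugation by an element of the $p$-group part.

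The main obstacle is to remove this factor $e$. Choosing $g_\sigma\in G$ with $\lambda^{g_\sigma}=\lambda^\sigma$ and setting $J=\langle I,\,g_\sigma\rangle$, one gets (using $A\leq J$, so $J=A\rtimes(J\cap V)$) a subgroup of the same shape with $\chi=\Ind_J^G\eta$, $\eta=\Ind_I^J\lambda$ irreducible and $\Q(\eta)=\Q(\chi)$, whence $\tr\chi=\Ind_J^G(\tr\eta)$; when $J\subsetneq G$ I conclude by induction on $|G|$. The genuinely hard case is $J=G$, where the whole descent is internal. Here I would argue with a complement: since $A\leq I$ there is a single double coset $V\backslash G/I$, so $\Res_V\chi=\Ind_W^V(\lambda|_W)$ and $\dim\chi^V\in\{0,1\}$, equal to $1$ exactly when $\lambda$ is trivial on $W$; for those characters Berz's formula (Theorem \ref{thm:Berz}) gives order $1$ in $\C(G)$. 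What remains --- and what I expect to be the bulk of the work --- is the twisted characters ($\lambda|_W\neq\mathbf 1$) together with the verification that $\schur(\chi)=1$ throughout. This is exactly where the hypothesis that $V$ is \emph{elementary abelian} must enter: most delicately at $p=2$, where it must exclude the quaternionic local obstruction to $\schur(\chi)=1$, the split metabelian structure $A\rtimes V$ admitting no generalised quaternion section.
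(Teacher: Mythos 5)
There is a genuine gap: your reduction to the monomial case $\chi=\Ind_I^G\lambda$ with $I=A\rtimes W$, $W=\Stab_V(\mu)$ and $\lambda$ linear is correct and is also the paper's starting point (quoting Serre), but the rest of the argument does not close. Inducing $\tr\lambda$ from the inertia subgroup $I$ overshoots by the factor $e=[\Q(\lambda):\Q(\chi)]$, and your two devices for removing it --- passing to an intermediate $J=\langle I,g_\sigma\rangle$ with induction on $|G|$, and restricting $\chi$ to $V$ itself --- only dispose of special cases. You explicitly leave open the case $J=G$ with $\lambda|_W\neq\triv$, together with the verification that all Schur indices are trivial, and that is exactly where the content of the lemma lies.

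The missing idea is to induce the trivial character from a \emph{complement of $W$} rather than from $V$ or from $I$: since $V$ is an $\F_p$-vector space, there is $H\leq V$ with $WH=V$ and $W\cap H=\{1\}$. Then $IH=AWH=G$, so $I\backslash G/H$ is a single double coset, and $I\cap H=(I\cap V)\cap H=W\cap H=\{1\}$, whence Frobenius reciprocity and Mackey's formula give
$$
  \langle\chi,\bC[G/H]\rangle=\langle\lambda,\Res_I\Ind_H^G\triv\rangle
  =\langle\lambda,\Ind_{I\cap H}^{I}\triv\rangle=\langle\lambda,\bC[I]\rangle=1.
$$
Multiplicity exactly one in a permutation character forces $\schur(\chi)=1$ (so the quaternionic obstruction you worry about at $p=2$ never arises), and then Berz's Theorem \ref{thm:Berz} --- which you already invoke --- gives $\gcd_{H'\leq G}\mu(\tr\chi,\Q[G/H'])=1$, i.e.\ $\tr\chi\in\Perm(G)$. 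This single computation is essentially the paper's entire proof; everything in your proposal after the monomial reduction can be discarded in its favour.
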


\begin{proof}
By Theorem \ref{thm:Berz}, it is enough to show that every complex irreducible
character $\tau$ of $G$ occurs exactly once in $\bC[G/H]$ for a
suitable $H\<G$. This is clear when $\dim\tau=1$.
Otherwise $\tau=\Ind_{AU}^G\chi$, for some
subgroup $U$ of $V$ and a 1-dimensional character $\chi$ of $AU$
(see \cite[Part II, \S 8.2]{SerLi}). Let
$H$ be a subgroup of $V$ that is complementary to $U$, i.e. $HU=V$ and $H\cap U=\{1\}$.
By Mackey's formula, we have

\smallskip
$
  \hfill\langle \tau,\bC[G/H]\rangle = \langle \chi,\Res_{AU}\Ind_{H}^G\triv\rangle =
    \langle \chi,\Ind_{AU\cap H}^{AU}\triv\rangle = \langle \chi, \bC[AU]\rangle = 1.
\hfill
\qedhere
$
\end{proof}

Recall that a $p$-quasi-elementary group $G=C\rtimes P$ is {\em basic} if 
the kernel $K$ of $P\rightarrow \Aut(C)$ is trivial or isomorphic to $D_8$ or 
has normal $p$-rank one. 

\begin{proposition}[\cite{HT}, Proposition 5.2]\label{prop:mainresfaithful}
Let $G=C\rtimes P$ be basic $p$-quasi-elementary.
Let $A_p$ be a maximal cyclic
subgroup of $K=\ker(P\rightarrow \Aut(C))$ that is normal in $P$ (it is all of $K$ if $K$ is cyclic,
and has index 2 in $K$ otherwise), let $A=C A_p$, 
and let $\chi$ be a faithful one-dimensional character of $A$.
Then $\rho=\tr \Ind^G_A\chi$ is a $\Q$-irreducible character, and
$$
  \text{order of $\rho$ in $\CH(G)$} = \frac{|P|}{|A_p|\cdot 
    \max\limits_{H\leq P\atop H\cap A_p=1}|H|}. 
$$
\end{proposition}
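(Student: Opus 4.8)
The plan is to compute the order of $\rho = \tr\Ind_A^G\chi$ in $\CH(G)$ directly via Theorem~\ref{thm:Berz}, which applies since a $p$-quasi-elementary group is supersoluble. By that theorem, the order of $\rho$ in $\CH(G)$ equals $n_\rho = \gcd_{H\le G}\mu(\rho,\Q[G/H])$, so everything reduces to understanding these multiplicities. First I would verify the preliminary assertion that $\rho$ is $\Q$-irreducible; since $\chi$ is a faithful one-dimensional character of the abelian group $A=CA_p$ and $G=C\rtimes P$, the induced character $\Ind_A^G\chi$ decomposes into a single Galois orbit, which makes $\tr\Ind_A^G\chi$ irreducible over $\Q$. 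Because $A_p$ is the maximal normal-in-$P$ cyclic subgroup of the kernel $K$ and the group is basic, the relevant stabiliser computations become tractable.

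Next I would reduce the computation of $\mu(\rho,\Q[G/H])$ to a question about $p$-subgroups. Writing $n_\rho$ as a gcd over all subgroups $H\le G$, the key point is that the minimum (and hence the relevant gcd behaviour) is governed by how $H$ meets $A_p$: the formula in the statement, namely $\frac{|P|}{|A_p|\cdot\max_{H\cap A_p=1}|H|}$, strongly suggests that the decisive subgroups are those $H\le P$ with $H\cap A_p=1$, for which $|H|$ is as large as possible. So the strategy is to compute $\mu(\rho,\Q[G/H])$ explicitly using Mackey's formula together with Frobenius reciprocity, exactly as in the proof of Lemma~\ref{A:Cp}: one writes $\langle\tau,\bC[G/H]\rangle=\langle\chi,\Res_{A}\Ind_H^G\triv\rangle$ for the complex constituent $\tau$ of $\rho$, expands via the double-coset decomposition $A\backslash G/H$, and counts the double cosets $AgH$ for which the twisted restriction of $\chi$ contains the trivial character of the relevant intersection. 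Keeping careful track of Schur indices (which for these $p$-quasi-elementary groups are controlled and typically equal to~$1$, so that $\tr\tau$ and $\hat\tau$ differ by a known factor) lets one pass between the complex multiplicity and the rational multiplicity $\mu(\rho,\Q[G/H])$.

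I would then evaluate the gcd. The expectation is that each individual multiplicity $\mu(\rho,\Q[G/H])$ is divisible by the index $|P|/(|A_p|\cdot|H\cap\text{(complement)}|)$-type quantity, and that choosing $H\le P$ with $H\cap A_p=1$ and $|H|$ maximal makes this bound sharp, producing the stated value $\frac{|P|}{|A_p|\cdot\max_{H\cap A_p=1}|H|}$ as the actual gcd. The basic hypothesis enters precisely here: it guarantees that $K$ (hence $A_p$) is either trivial, cyclic, or $D_8$-like with $A_p$ of index~$2$, which is exactly what is needed to ensure that a subgroup $H$ with $H\cap A_p=1$ of the optimal size exists and that no smaller value of the multiplicity is attained by a subgroup not contained in $P$. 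One must also confirm the lower bound, i.e.\ that \emph{no} subgroup $H$ forces a smaller multiplicity; this comes from the faithfulness of $\chi$ on $A$, which constrains $\langle\chi,\Res_A\Ind_H^G\triv\rangle$ from below.

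The main obstacle, I expect, will be the sharpness of the gcd: showing that the optimal subgroups are exactly those $H$ with $H\cap A_p=1$ of maximal order, and that taking the gcd over \emph{all} $H\le G$ (not merely those contained in $P$, and not merely those meeting $A_p$ trivially) does not introduce a common factor smaller than the claimed one. This requires controlling the double-coset combinatorics of $A\backslash G/H$ uniformly, using the faithfulness of $\chi$ to pin down when the intersection conditions are met. The interplay of the $p$-part and the cyclic part $C$, mediated by the action $P\to\Aut(C)$ with kernel $K$, is where the basic hypothesis does its essential work, and getting the multiplicity count to land exactly on $\max_{H\cap A_p=1}|H|$ rather than some coarser divisor is the delicate step.
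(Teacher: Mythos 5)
The paper itself offers no proof of this proposition --- it is quoted from Hambleton--Taylor \cite{HT} --- so your proposal can only be judged on its own terms. Your skeleton (Theorem~\ref{thm:Berz} plus a Mackey/Frobenius computation of the multiplicities $\mu(\rho,\Q[G/H])$) is the right one, but the two load-bearing steps are respectively wrong and deferred. First, the $\Q$-irreducibility of $\rho=\tr\Ind_A^G\chi$ does not follow from ``$\Ind_A^G\chi$ decomposes into a single Galois orbit'': if $\Ind_A^G\chi$ were a full Galois orbit of size greater than one, or had multiplicities, then applying $\tr$ would produce a proper multiple of a $\Q$-irreducible character. What you need is that $\Ind_A^G\chi$ is itself irreducible, i.e.\ that the inertia group of $\chi$ in $G$ is exactly $A$. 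Since $A=C\times A_p$ is normal in $G$ (because $A_p\le K$ centralises $C$ and is normal in $P$) and $\chi$ is faithful, an element of $P$ fixes $\chi$ if and only if it acts trivially on both $C$ and $A_p$, i.e.\ lies in $C_K(A_p)$; the basic hypothesis enters precisely to guarantee $C_K(A_p)=A_p$ in each allowed case ($K$ cyclic, $K\iso D_8$, or $K$ of normal rank one, where the cyclic maximal subgroup is self-centralising).

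Second, the ``delicate step'' you leave as an expectation is the entire content of the proposition, and your write-up misses the mechanism that makes it short rather than delicate: the normality of $A$ in $G$. Because of it, every Mackey intersection $H\cap A^g$ equals $H\cap A$, and each summand $\langle\Res_{H\cap A}\chi^g,\triv\rangle$ vanishes (as $\chi^g$ is faithful on $A$) unless $H\cap A=1$, in which case every summand equals $1$ and $\langle\Ind_A^G\chi,\bC[G/H]\rangle=\#(H\backslash G/A)=|G|/(|H||A|)=|P|/(|A_p|\,|H|)$. A subgroup with $H\cap A=1$ satisfies $H\cap C=1$, hence embeds in $G/C\iso P$, is a $p$-group conjugate into $P$, and still meets the normal subgroup $A_p$ trivially; so the nonzero multiplicities are exactly the $p$-powers $|P|/(|A_p|\,|H|)$ for $H\le P$ with $H\cap A_p=1$, and the gcd of a set of $p$-powers is their minimum, giving the stated formula. (You also need to convert ``order in $\CH(G)$'' into $\gcd_H\langle\tau,\Q[G/H]\rangle$ for $\tau=\Ind_A^G\chi$; by Lemma~\ref{lem:schurnoschur} and the relation $\mu(\hat\tau,\beta)=\langle\tau,\beta\rangle/\schur(\tau)$ the Schur index cancels, so your concern on that point is harmless.) With these two pieces supplied, your outline becomes a complete proof along what is surely the intended route.
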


\section{$\CH(G)$ as a Mackey functor}
\label{sMackey}

Let $\cR$ be a Mackey subfunctor of the character ring
Mackey functor $\Char(G)$. This simply means that for any finite group
$G$, $\cR(G)$ is a subgroup of $\Char(G)$ such that
if $H\leq G$ are finite groups, then
\begin{itemize}
\item for all $\rho\in \cR(H)$, $\Ind^G_H\rho\in \cR(G)$,
\item for all $\tau\in \cR(G)$, $\Res_H\tau\in \cR(H)$,
\item for all $\rho\in \cR(H)$ and $g\in G$, $\rho^g\in \cR(H^g)$.
\end{itemize}

Here are some examples:
\begin{itemize}
\item $R_K(G)$, the representation ring
of $G$ over a fixed subfield $K$ of $\bC$,
\item $\Char_K(G)$, the ring generated by $K$-valued characters, with fixed $K\subset \bC$,
\item $\Perm(G)$, the ring generated by permutation characters,
\item the subgroup of $\Char(G)$ generated by
characters of degree divisible by a fixed integer $n$.
\end{itemize}

If $p$ is a prime number, write $\cR(G)_p$ for $\cR(G)\otimes \Z_p$, where
$\Z_p$ is the ring of $p$-adic integers.

\begin{proposition}\label{mainreductionp}
Let $G$ be a finite group, fix a prime number $p$, and let $\cF_p$ be a family
of subgroups of $G$ such that every $p$-quasi-elementary subgroup of $G$ is conjugate to
a subgroup of some $Q\in\cF_p$. Then 
$$
  \prod_{Q\in \cF_p}\Res_Q: \cR(G)_p \lar \prod_{Q\in\cF_p} \cR(Q)_p
$$
is injective.
Dually,
$$
  \sum_Q\Ind_Q^G: \prod_{Q\in\cF_p} \cR(Q)_p \lar \cR(G)_p
$$
is surjective.
\end{proposition}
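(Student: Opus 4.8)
The plan is to deduce both assertions from a single identity obtained by working in the Burnside ring rather than in the character ring. The key structural observation is that, although a Mackey subfunctor $\cR\subseteq\Char$ need not be closed under multiplication by arbitrary characters (for instance $R_\Q$ is not a $\Char$-submodule), it is automatically a module over the Burnside ring Green functor $B$, where $B(H)$ is the Grothendieck ring of finite $H$-sets. Indeed, the action is expressed purely through the Mackey operations by $[H/K]\cdot m=\Ind^H_K\Res_K m$, and therefore carries $\cR(H)$ into itself precisely because $\cR$ is closed under $\Ind$ and $\Res$. With this module structure one has the projection formula $\Ind^G_Q(x)\cdot\tau=\Ind^G_Q(x\cdot\Res_Q\tau)$ for $x\in B(Q)$ and $\tau\in\cR(G)$, which one checks on the basis classes $[Q/K]$ using transitivity of induction and restriction. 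Passing to $B$, rather than trying to use $R_\Q$ as the coefficient ring, is the whole point.

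The single input from induction theory is Solomon's induction theorem applied to the Burnside ring: after tensoring with $\Z_p$, the identity of $B(G)$ can be written as
$$
  1=\sum_i\Ind^G_{Q_i}(x_i),\qquad x_i\in B(Q_i)\otimes\Z_p,
$$
with each $Q_i$ a $p$-quasi-elementary subgroup of $G$. (This is exactly where $p$-quasi-elementary, rather than merely $p$-elementary, subgroups and $p$-integral coefficients are needed, and it is a statement about the Burnside ring.) Multiplying this relation by an arbitrary $\tau\in\cR(G)_p$ and applying the projection formula termwise yields the master identity
$$
  \tau=\sum_i\Ind^G_{Q_i}\bigl(x_i\cdot\Res_{Q_i}\tau\bigr),
$$
in which every summand $x_i\cdot\Res_{Q_i}\tau$ lies in $\cR(Q_i)_p$.

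Both statements now follow formally. For injectivity, suppose $\Res_Q\tau=0$ for all $Q\in\cF_p$. Each $Q_i$ is conjugate to a subgroup of some $Q\in\cF_p$, and since $\tau$ is a $G$-invariant class function its restriction to conjugate subgroups agrees up to conjugation; hence $\Res_{Q_i}\tau=0$, every summand in the master identity vanishes, and $\tau=0$. For surjectivity, fix $\tau\in\cR(G)_p$ and for each $i$ choose $g_i\in G$ and $Q(i)\in\cF_p$ with $Q_i^{g_i}\le Q(i)$. Writing $z_i=x_i\cdot\Res_{Q_i}\tau\in\cR(Q_i)_p$ and using conjugation-closure of $\cR$ together with transitivity of induction,
$$
  \Ind^G_{Q_i}(z_i)=\Ind^G_{Q(i)}\Ind^{Q(i)}_{Q_i^{g_i}}\bigl(z_i^{g_i}\bigr),
$$
where $\Ind^{Q(i)}_{Q_i^{g_i}}(z_i^{g_i})\in\cR(Q(i))_p$. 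Collecting the terms of the master identity according to their target group in $\cF_p$ exhibits $\tau$ as $\sum_{Q\in\cF_p}\Ind^G_Q(\sigma_Q)$ with $\sigma_Q\in\cR(Q)_p$, which is the desired surjectivity.

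The formal part of this is routine: the $B$-module structure and the projection formula are elementary consequences of the Mackey axioms, and the bookkeeping with $\cF_p$ uses only conjugation-closure and transitivity of induction. The one genuinely nontrivial ingredient, and the main obstacle, is the $p$-integral Burnside-ring identity above; once it is in hand, everything else is driven mechanically by the master identity.
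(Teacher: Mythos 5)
Your proof is correct and is essentially the paper's argument in Green-functor packaging: the paper writes $d\cdot\triv_G=\sum_i n_i\Ind_{H_i}^G\triv_{H_i}$ with $p\nmid d$ (Solomon), tensors with $\rho$, and applies the projection formula $\rho\cdot\Ind_H^G\triv_H=\Ind_H^G\Res_H\rho$, which is exactly your master identity with the prime-to-$p$ denominator absorbed into $\Z_p$. The only cosmetic caveat is that the identity $1=\sum_i\Ind^G_{Q_i}(x_i)$ \emph{in} $B(G)\otimes\Z_p$ is really Dress's induction theorem rather than Solomon's; but since the $B(G)$-action on $\Char(G)$ factors through linearization, the character-level version the paper uses already suffices for your argument.
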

\begin{proof}
By Solomon's induction theorem,
a prime-to-$p$ multiple $d$ of the trivial representation can be written as
$$
  d\triv_G = \sum_i n_i \Ind_{H_i}^G\triv_{H_i}
$$
for some $p$-quasi-elementary subgroups $H_i$ and integers $n_i$. 
Because $\Ind_{H_i^g}^G\triv_{H_i^g}\iso \Ind_{H_i}^G\triv_{H_i}$, we may assume
that each $H_i$ is contained in some $Q_i\in\cF_p$. Taking tensor products with
any $\rho\in \cR(G)$ yields
$$
  d\rho = \sum_i n_i \Ind_{H_i}^G\Res_{H_i}\rho.
$$
If all $\Res_{H_i}\rho$ were 0, then so would be $d\rho$, and therefore also $\rho$.
This proves injectivity. Also, the equation shows that $d\rho \in
\Im\left(\sum_Q\Ind_Q^G\cR(Q)\right)$, which proves surjectivity, since $d$ is invertible
in $\Z_p$.
\end{proof}

\begin{corollary}
\label{mainreductionindres}
For $S,T\in\cF_p$ write $\alpha_{S,T}=\Res_T \Ind^G_S: \CH(S)\lar\CH(T)$.
Then
$$
  \CH(G)_p \iso \text{Image}\Bigl(\prod_T\sum_S \alpha_{S,T}: \prod_{S\in\cF_p} \CH(S) \lar \prod_{T\in\cF_p} \CH(T)\Bigr).
$$
In particular,
$\CH(G)_p=1$ if and only if
for all pairs $S,T\in \cF_p$ and all $\rho\in R_\Q(S)$ (equivalently, for those $\rho$ whose class in
$\C(S)$ is nontrivial), we have $\Res^G_T\Ind_S^G\rho \in \Perm(T)$. The same also
holds for $\C(G)$.
\end{corollary}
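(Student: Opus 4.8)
The plan is to prove Corollary~\ref{mainreductionindres} by transferring the two maps of Proposition~\ref{mainreductionp} from the functor $\cR=R_\Q$ down to the quotient functor $\CH=\Char_\Q/\Perm$, and then identifying the image of the composite $\Res\Ind$. First I would record that both $R_\Q$ and $\Perm$ are Mackey subfunctors of $\Char$, so the quotient $\CH$ is again a Mackey functor on which $\Ind$, $\Res$, and conjugation act; applying Proposition~\ref{mainreductionp} with this choice of $\cR$ gives that $\prod_T\Res_T\colon \CH(G)_p\to\prod_T\CH(T)_p$ is injective and $\sum_S\Ind^G_S\colon\prod_S\CH(S)_p\to\CH(G)_p$ is surjective.

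The key step is to combine these two facts into the stated isomorphism with an image. Since $\CH(G)$ already has exponent dividing $|G|$ by Artin's theorem, and the composite of a surjection followed by an injection has image isomorphic to the source modulo the kernel of the surjection, I would argue as follows. The composite $\prod_T\sum_S\alpha_{S,T}$ factors as $\bigl(\prod_T\Res_T\bigr)\circ\bigl(\sum_S\Ind^G_S\bigr)$ through $\CH(G)_p$. Surjectivity of the second factor means its image is all of $\CH(G)_p$; injectivity of the first factor means it restricts to an isomorphism of $\CH(G)_p$ onto its image inside $\prod_T\CH(T)_p$. Composing, the image of $\prod_T\sum_S\alpha_{S,T}$ equals the isomorphic image of $\CH(G)_p$, which gives the displayed identification. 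Here I should be a little careful that the maps are compatible with the $\Z_p$-structure, but since everything is finite of exponent dividing $|G|$ this is harmless after localising at $p$.

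For the final ``if and only if'' criterion, the point is simply to unwind when the image is trivial. The image of $\prod_T\sum_S\alpha_{S,T}$ is trivial precisely when every map $\alpha_{S,T}=\Res_T\Ind^G_S$ is the zero map on $\CH(S)$, i.e.\ when $\Res_T\Ind^G_S\rho\in\Perm(T)$ for all $S,T\in\cF_p$ and all $\rho\in R_\Q(S)$. Using the isomorphism just established, $\CH(G)_p=1$ if and only if this image is trivial; and since $\CH(G)=\{1\}$ is equivalent to $\CH(G)_p=\{1\}$ for every $p$ (the reduction being prime-by-prime), this yields the criterion. The parenthetical reduction to those $\rho$ with nontrivial class in $\C(S)$ follows because a $\rho$ already lying in $\Perm(S)$ has $\Ind^G_S\rho\in\Perm(G)$ and hence $\Res_T\Ind^G_S\rho\in\Perm(T)$ automatically, so such $\rho$ impose no condition; the same bookkeeping applies verbatim with $\C$ in place of $\CH$.

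The main obstacle I anticipate is not any single hard computation but getting the functoriality and the quotient structure exactly right: verifying that $\Perm$ is a genuine Mackey \emph{sub}functor of $\Char$ (so that $\CH$ inherits induction and restriction, which requires that $\Ind$ and $\Res$ send permutation characters to permutation characters), and then checking that the factorisation of the composite through $\CH(G)_p$ is legitimate so that surjectivity and injectivity can be chained. Once that bookkeeping is in place, the identification of the image and the triviality criterion are formal.
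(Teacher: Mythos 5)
Your argument is correct and follows essentially the same route as the paper, whose one-line proof applies Proposition \ref{mainreductionp} to $\cR=\Perm$, $R_\Q$ and $\Char_\Q$ and passes to the quotients: your chaining of the surjective induction map with the injective restriction map, and your unwinding of the triviality criterion, are exactly the intended deductions. The one point to be precise about is that Proposition \ref{mainreductionp} is stated only for subfunctors of $\Char$, so one must either rerun its proof for the quotient $\CH$ (the identity $d\rho=\sum_i n_i\Ind_{H_i}^G\Res_{H_i}\rho$ descends because $\Perm$ is closed under $\Ind$ and $\Res$) or apply it to the three subfunctors and do the short diagram chase --- the bookkeeping you correctly identify as the main thing to check.
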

\begin{proof}
Apply Proposition \ref{mainreductionp} to $\cR$ being $\Perm,R_{\Q}$,
and $\Char_{\Q}$.
\end{proof}

\begin{corollary}
\label{mainreduction}
Let $\cF$ be a family of subgroups of $G$ such that every quasi-elementary
subgroup is conjugate to a subgroup of some $Q\in\cF$.
Then 
$$
  \CH(G) \longinjects \prod_{Q\in\cF} \CH(Q)
$$ 
via the (product of) restriction maps. Consequently, 
the kernel of the composition
$$
  R_\Q(G) \>{\buildrel\prod\Res\over{\hbox to 40pt{\rightarrowfill}}}\> 
    \prod_{Q\in\cF} R_\Q(Q) \lar \prod_{Q\in\cF} \CH(Q)
$$
is $\Perm(G)$. Dually, the composition 
$$
  \prod_{Q\in\cF} R_\Q(Q) \stackrel{\Ind}{\longrightarrow} R_\Q(G) \rightarrow \CH(G)
$$
is onto. The same holds with $R_\Q$ replaced by $\Char_\Q$ and $\hat C$ by $C$.
\end{corollary}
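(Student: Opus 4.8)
The plan is to deduce Corollary \ref{mainreduction} from Proposition \ref{mainreductionp} and Corollary \ref{mainreductionindres} by assembling the local ($p$-adic) information into a global statement. The key point is that the isomorphism type of a finite abelian group, such as $\CH(G)$, is determined by its $p$-primary components $\CH(G)\otimes\Z_p = \CH(G)_p$, and that an integral map of finitely generated $\Z$-modules is injective (resp.\ surjective onto a direct summand, resp.\ has a prescribed kernel) if and only if the corresponding statement holds after tensoring with $\Z_p$ for every prime $p$. So my first step would be to fix a prime $p$ and apply Proposition \ref{mainreductionp} with $\cR = \Char_\Q$ (or $R_\Q$, or $\Perm$) and with $\cF_p$ taken to be the given family $\cF$: since every quasi-elementary subgroup of $G$ is by hypothesis subconjugate to some $Q\in\cF$, in particular every $p$-quasi-elementary subgroup is, so $\cF$ is a legitimate choice of $\cF_p$ for \emph{each} $p$ simultaneously.

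Next I would invoke Corollary \ref{mainreductionindres}, which already gives the injectivity of $\CH(G)_p \to \prod_{Q\in\cF}\CH(Q)_p$ via restriction and identifies the image, for each individual $p$. The task is then to upgrade these prime-by-prime statements to the single integral statement in the corollary. For the injectivity of $\CH(G)\injects\prod_{Q\in\cF}\CH(Q)$, I would note that the restriction map is a homomorphism of finite abelian groups whose kernel $N$ satisfies $N\otimes\Z_p = 0$ for all $p$ (by the injectivity after $\otimes\Z_p$), whence $N=0$. The statement that the kernel of the composite $R_\Q(G)\to\prod_Q R_\Q(Q)\to\prod_Q\CH(Q)$ is exactly $\Perm(G)$ then follows formally: an element of $R_\Q(G)$ maps to zero in every $\CH(Q)$ if and only if its image in $\CH(G)$ lies in the kernel of the injection just established, i.e.\ is zero in $\CH(G)$, i.e.\ the element lies in $\Perm(G)$. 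Dually, the surjectivity of $\prod_Q R_\Q(Q)\stackrel{\Ind}{\to} R_\Q(G)\to\CH(G)$ would be obtained by checking surjectivity after $\otimes\Z_p$ for each $p$ (which is exactly the second, dual, assertion of Proposition \ref{mainreductionp} combined with the definition of $\CH$), and then concluding that a map of finite abelian groups that is surjective $p$-locally for all $p$ is surjective.

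The one place that requires a little care, and which I expect to be the main (though still routine) obstacle, is the passage from the local to the global statement, specifically ensuring that the individual localisation results for the various primes are \emph{compatible} with a single fixed choice of family $\cF$. Proposition \ref{mainreductionp} is stated for one prime $p$ at a time, with a family $\cF_p$ adapted to that prime, whereas the corollary asks for one family that works for all primes at once. The resolution is exactly the hypothesis on $\cF$ in the corollary: controlling \emph{all} quasi-elementary subgroups (over every prime) simultaneously means $\cF$ serves as an admissible $\cF_p$ for each $p$, so no genuine obstruction arises --- one simply applies the single-prime results $p$ by $p$ with the same $\cF$. Finally, I would remark that the entire argument goes through verbatim with $\Char_\Q$ and $\CH$ replaced by $R_\Q$ and $\C$, since Proposition \ref{mainreductionp} was proved for $\cR$ ranging over $\Perm$, $R_\Q$, and $\Char_\Q$ uniformly, which is precisely what the last sentence of the corollary asserts.
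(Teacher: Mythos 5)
The paper gives no explicit proof of this corollary, but the intended argument is plainly the one you describe: take $\cF_p=\cF$ for every prime $p$ (legitimate since subconjugacy for all quasi-elementary subgroups implies it for all $p$-quasi-elementary ones), apply Proposition \ref{mainreductionp} to $\cR=\Perm, R_\Q, \Char_\Q$ as in Corollary \ref{mainreductionindres}, and assemble the primes using that injectivity/surjectivity of a map of finitely generated $\Z$-modules with finite (co)kernel can be checked after $\otimes\,\Z_p$ for each $p$. Your proposal is correct and matches this approach.
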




\begin{remark}
The theorem and the two corollaries give a very efficient way of 
computing $\CH(G)_p, \CH(G)$, $\C(G)_p$, $\C(G)$ and of finding 
$\Perm(G)$ as a subring of $R_\Q(G)\leq \Char_\Q(G)$,
without computing the full lattice of subgroups of $G$.
\end{remark}

\begin{remark}
One possible family $\cF_p$ is the set of maximal $p$-quasi-elementary 
subgroups of $G$. These are of the form
$$
  Q = C\rtimes \Syl_p(N_G(C)),
$$
where $C$ is cyclic of order prime to $p$. Possible families $\cF$ in Corollary
\ref{mainreduction} are $\cF=\bigcup_p \cF_p$,
as $p$ ranges over prime divisors of $|G|$,
or alternatively $\cF=\{N_G(C)\}$ as $C$ ranges over 
(representatives of conjugacy classes of) cyclic subgroups of $G$.
\end{remark}


\begin{notation}
For the remainder of this section we use the following notation:

\smallskip

\begin{tabular}{lll}
$CC(G)$         & = & the set of conjugacy classes of $G$,\\
$CC_{\cyc}(G)$  & = & the set of conjugacy classes of cyclic subgroups of $G$,\\
$[x]$           & = & the conjugacy class of $x$, when $x$ is either an element
of $G$\\ & & or a cyclic subgroup,\\
$\tr^*\chi$     & = & the normalised trace
$\tr^*\chi=\frac{1}{[\Q(\chi):\Q]}\tr\chi$ of a character $\chi$,\\
$\tau(D)$       & = & $\tau(y)$, where $D\leq G$ is a cyclic subgroup,
$y$ is any generator\\
& & of $D$, and $\tau\in \Char_{\Q}(G)\otimes \Q$. The rationality
of $\tau$ ensures\\
 & & that $\tau(y)$ only depends on $D$ and not on the generator $y$.
\end{tabular}

Note in particular, that
for any character $\chi$ of $G$ and any cyclic subgroup $D$ of $G$,
$\tr^*\chi(D)$ is the average value of $\chi$ on the generators of $D$.
\end{notation}

\begin{lemma}
\label{lem:innerprod}
Let $H_1$, $H_2$ be two subgroups
of $G$. Let $\tau_i$ be a character of $H_i$, $i=1,2$, and
assume that
$\tau_1$ is $\Q$-valued. Then
\beq
  \lefteqn{\langle \Ind_{H_1}^G \tau_1,  \Ind_{H_2}^G \tau_2\rangle=}\\
  & & \displaystyle\frac{1}{|H_1||H_2|} \sum_{[C]\in CC_{\cyc}(G)}|N_G(C)|\phi(|C|)\cdot
  \sum_{D_1\leq H_1\atop D_1\sim C}\tau_1(D_1)\cdot
  \sum_{D_2\leq H_2\atop D_2\sim C}\tr^*\tau_2(D_2).
\eeq
\end{lemma}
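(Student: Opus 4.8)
The plan is to compute the inner product on the left directly from its definition and then reorganize the resulting sum, passing from conjugacy classes of \emph{elements} to conjugacy classes of \emph{cyclic subgroups}. First I would use the standard formula for an induced character value: for $x\in G$,
\[
  \Ind_{H_i}^G\tau_i(x)=\frac{|\C_G(x)|}{|H_i|}\sum_{\substack{y\in H_i\\ y\sim_G x}}\tau_i(y),
\]
where $\C_G$ denotes the centralizer. Inserting this into $\langle\alpha,\beta\rangle=\frac1{|G|}\sum_{x}\alpha(x)\overline{\beta(x)}$ and summing over conjugacy classes $[x]$ (each of size $|G|/|\C_G(x)|$), the two factors $|\C_G(x)|$ coming from the two induction formulas combine with the single factor coming from the class size to leave exactly one power of $|\C_G(x)|$; reindexing by conjugate pairs then gives
\[
  \langle\Ind_{H_1}^G\tau_1,\Ind_{H_2}^G\tau_2\rangle=\frac{1}{|H_1||H_2|}\sum_{\substack{(y_1,y_2)\in H_1\times H_2\\ y_1\sim_G y_2}}|\C_G(y_1)|\,\tau_1(y_1)\,\overline{\tau_2(y_2)}.
\]

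The key observation next is that $\C_G(y)=\C_G(\langle y\rangle)$ depends only on the cyclic subgroup generated by $y$, so the weight $|\C_G(y_1)|$ is constant on each conjugacy class $[C]$ of cyclic subgroups. I would therefore group the pairs $(y_1,y_2)$ by the pair $(D_1,D_2)=(\langle y_1\rangle,\langle y_2\rangle)$ of cyclic subgroups; since $y_1\sim_G y_2$ forces $D_1\sim_G D_2$, both lie in one class $[C]$, with $D_1\le H_1$ and $D_2\le H_2$. The combinatorial heart is then to fix such $D_1,D_2$ together with a generator $y_2$ of $D_2$ and count the generators $y_1$ of $D_1$ with $y_1\sim_G y_2$. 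Two generators of a fixed cyclic group are $G$-conjugate if and only if they are $N_G(D_1)$-conjugate (a conjugating element must normalise the group), and $N_G(D_1)/\C_G(D_1)$ acts freely on the $\phi(|C|)$ generators since it embeds into $(\Z/|C|)^\times$, which acts simply transitively; hence the count is $|N_G(C)|/|\C_G(C)|$ for \emph{every} generator $y_2$.

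Summing $\overline{\tau_2(y_2)}$ over the generators of $D_2$ with this constant multiplicity produces $\frac{|N_G(C)|}{|\C_G(C)|}\phi(|C|)\,\overline{\tr^*\tau_2(D_2)}$, and $\overline{\tr^*\tau_2(D_2)}=\tr^*\tau_2(D_2)$ because $\tr^*\tau_2$ is rational-valued. Using that $\tau_1$ is $\Q$-valued to replace $\tau_1(y_1)$ by $\tau_1(D_1)$, the constant weight $|\C_G(C)|$ cancels against the $|\C_G(C)|$ in the denominator, and assembling first over the pairs $(D_1,D_2)$ in a class $[C]$ and then over $[C]\in CC_{\cyc}(G)$ yields the stated formula. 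I expect this last counting to be the main obstacle: it hinges on the two identifications $\C_G(y)=\C_G(\langle y\rangle)$ and ``$G$-conjugate generators $=$ $N_G$-conjugate generators,'' and it requires care because $\tau_2$ need not be rational, so one must check that averaging over \emph{all} generators of $D_2$ — rather than only those hitting a single element-conjugacy class — is exactly what reconstructs $\tr^*\tau_2(D_2)$.
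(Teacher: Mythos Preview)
Your argument is correct and follows essentially the same route as the paper's proof. Both start from the standard expression for $\langle\Ind\tau_1,\Ind\tau_2\rangle$ as a sum over conjugacy classes of elements weighted by centraliser sizes, and then regroup according to conjugacy classes of cyclic subgroups; the key arithmetic step in each is the identity $|Z_G(C)|\cdot\#\{\text{generators of $C$ that are $G$-conjugate to a fixed one}\}=|N_G(C)|$, which is exactly your observation that $N_G(C)/Z_G(C)$ acts freely on the generators. The only cosmetic difference is that the paper keeps the sum indexed by element-classes $[x]$ and then collapses, whereas you reindex by pairs $(y_1,y_2)$ from the outset; the computations are otherwise identical.
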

\begin{proof}
First, note that by definition of inner products and of induced class functions,
$$
  \langle \Ind_{H_1}^G \tau_1,  \Ind_{H_2}^G \tau_2\rangle
    =
  \frac{1}{|H_1||H_2|}
  \sum_{[x]\in CC(G)} |Z_G(x)| 
    \overline{\Bigl(\sum_{y\in[x]\cap H_1} \tau_1(y)\Bigr)}
    \Bigl(\sum_{y\in[x]\cap H_2} \tau_2(y)\Bigr).
$$
The idea of the proof is to partition the set of conjugacy classes of
elements of $G$ according to conjugacy classes of cyclic subgroups they generate,
and to use the fact that for a rational character $\tau$, $\tau(x) = \tau(x')$
whenever $x$ and $x'$ generate conjugate cyclic subgroups. We get
\beq
\lefteqn{\langle \Ind_{H_1}^G \tau_1,  \Ind_{H_2}^G \tau_2\rangle}\\
& = & \displaystyle \frac{1}{|H_1||H_2|}
  \sum_{[x]\in CC(G)} |Z_G(x)| 
    \overline{\Bigl(\sum_{y\in[x]\cap H_1} \tau_1(y)\Bigr)}
    \Bigl(\sum_{y\in[x]\cap H_2} \tau_2(y)\Bigr)\\[10pt]
& = &\displaystyle \frac{1}{|H_1||H_2|}
  \sum_{[C] \in CC_{\cyc}(G)}f(C),
\eeq
where
\beq
  f(C) &=& \displaystyle |Z_G(C)|\cdot \sum_{[x]\in CC(G)\atop \langle x\rangle = C}
  \Bigl(\sum_{y\in[x]\cap H_1} \tau_1(y)\Bigr)
    \Bigl(\sum_{y\in[x]\cap H_2} \tau_2(y)\Bigr)\\[10pt]
& = & |Z_G(C)|\cdot \#\{k: x\sim x^k\}\cdot\\
  & &\displaystyle\qquad \cdot \sum_{D_1\leq H_1\atop D_1\sim C}\tau_1(D_1)\cdot
  \sum_{[x]\in CC(G)\atop [x]\sim C}\sum_{y\in[x]\cap H_2} \tau_2(y)\\[10pt]
& = & \displaystyle|N_G(C)|\cdot
  \sum_{D_1\leq H_1\atop D_1\sim C}\tau_1(D_1)\cdot
  \sum_{D_2\leq H_2\atop D_2\sim C}\sum_{\text{generators}\atop y \text{ of }D_2}\tau_2(y)\\[10pt]
& = &\displaystyle |N_G(C)|\cdot
  \sum_{D_1\leq H_1\atop D_1\sim C}\tau_1(D_1)\cdot
  \sum_{D_2\leq H_2\atop D_2\sim C}\phi(|C|)\tr^*_{\bar{\Q}/\Q}\tau_2(y),
\eeq
as claimed.
\end{proof}
\begin{corollary}\label{cor:mu}
Suppose $H_1\<Q_1\<G$, $H_2\<Q_2\<G$, and let 
$\chi_i$ be irreducible characters of $H_i$. 
Set $\tau_i = \Ind_{H_i}^{Q_i}\chi_i$,
and $\rho_i=\tr\tau_i$.
Assume that $\tau_2$ is irreducible.~Then
\beq
\mu(\rho_2,\Res_{Q_2}\Ind^{G}_{Q_1}\rho_1) & = &
\displaystyle\frac{[\Q(\tau_1):\Q]}{|H_1|\cdot|H_2|}
  \sum_{[C]\in CC_{\cyc}(G)}|N_G(C)|\phi(|C|)\cdot\\
  & & \displaystyle\qquad\cdot\sum_{D_1\leq H_1\atop D_1\sim C}\tr^*\chi_1(D_1)\cdot
  \sum_{D_2\leq H_2\atop D_2\sim C}\tr^*\chi_2(D_2).
\eeq
\end{corollary}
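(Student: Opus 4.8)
The plan is to collapse the composite $\mu(\rho_2,\Res_{Q_2}\Ind^G_{Q_1}\rho_1)$ into a single inner product over $G$ and then invoke Lemma~\ref{lem:innerprod}. By definition of $\mu$, the quantity equals $\langle\rho_2,\Res_{Q_2}\Ind^G_{Q_1}\rho_1\rangle/\langle\rho_2,\rho_2\rangle$, and Frobenius reciprocity for the $(\Ind,\Res)$-adjunction rewrites the numerator as $\langle\Ind^G_{Q_2}\rho_2,\Ind^G_{Q_1}\rho_1\rangle$. The denominator is where I would use the hypothesis that $\tau_2$ is irreducible: then $\rho_2=\tr\tau_2$ is the sum of the $[\Q(\tau_2):\Q]$ pairwise-distinct irreducible Galois conjugates of $\tau_2$, so $\langle\rho_2,\rho_2\rangle=[\Q(\tau_2):\Q]$.

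The heart of the argument is to re-express each $\Ind^G_{Q_i}\rho_i$ as an induction straight from $H_i$. First I would prove the identity
\[
  \rho_i \;=\; \tr\Ind^{Q_i}_{H_i}\chi_i \;=\; \frac{[\Q(\tau_i):\Q]}{[\Q(\chi_i):\Q]}\,\Ind^{Q_i}_{H_i}\tr\chi_i,
\]
which holds whether or not $\tau_i$ is irreducible. Since induction commutes with the Galois action, $\Ind^{Q_i}_{H_i}\tr\chi_i=\sum_{\sigma\in\Gal(\Q(\chi_i)/\Q)}\tau_i^\sigma$ with $\tau_i^\sigma=\Ind^{Q_i}_{H_i}\chi_i^\sigma$; the map $\sigma\mapsto\tau_i^\sigma$ has fibres that are exactly the cosets of $\Gal(\Q(\chi_i)/\Q(\tau_i))$ (because $\tau_i^\sigma=\tau_i$ precisely when $\sigma$ fixes the field of values $\Q(\tau_i)$), so each distinct conjugate occurs $[\Q(\chi_i):\Q(\tau_i)]$ times and the sum equals $[\Q(\chi_i):\Q(\tau_i)]\,\tr\tau_i$. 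Applying $\Ind^G_{Q_i}$ and induction in stages then gives $\Ind^G_{Q_i}\rho_i=\frac{[\Q(\tau_i):\Q]}{[\Q(\chi_i):\Q]}\Ind^G_{H_i}\tr\chi_i$.

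Substituting both expressions, the numerator becomes $\frac{[\Q(\tau_1):\Q][\Q(\tau_2):\Q]}{[\Q(\chi_1):\Q][\Q(\chi_2):\Q]}\langle\Ind^G_{H_1}\tr\chi_1,\Ind^G_{H_2}\tr\chi_2\rangle$, and I would evaluate the remaining inner product with Lemma~\ref{lem:innerprod}, taking the $\Q$-valued character $\tr\chi_1$ as its first argument. It remains to convert the unnormalised traces appearing there into $\tr^*$: in the first sum $\tr\chi_1(D_1)=[\Q(\chi_1):\Q]\tr^*\chi_1(D_1)$, while in the second $\tr^*(\tr\chi_2)=\tr\chi_2=[\Q(\chi_2):\Q]\tr^*\chi_2$ since $\tr\chi_2$ is already rational. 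These two factors cancel the denominators $[\Q(\chi_1):\Q][\Q(\chi_2):\Q]$, leaving $[\Q(\tau_1):\Q][\Q(\tau_2):\Q]$ times the sum; dividing by $\langle\rho_2,\rho_2\rangle=[\Q(\tau_2):\Q]$ yields exactly the stated formula. I expect the main obstacle to be the second paragraph: proving the displayed identity for a possibly \emph{reducible} $\tau_1$ requires the careful Galois-orbit count above, and it is precisely this step that produces the factor $[\Q(\tau_1):\Q]$ rather than $[\Q(\chi_1):\Q]$.
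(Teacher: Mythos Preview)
Your proof is correct and follows essentially the same route as the paper: reduce $\mu(\rho_2,\Res_{Q_2}\Ind^G_{Q_1}\rho_1)$ via Frobenius reciprocity and induction in stages to an inner product of characters induced from $H_1$ and $H_2$, then apply Lemma~\ref{lem:innerprod}. The only cosmetic differences are that the paper immediately uses $\mu(\rho_2,X)=\langle\tau_2,X\rangle$ (since $\tau_2$ is irreducible) rather than dividing by $\langle\rho_2,\rho_2\rangle$, and it applies Lemma~\ref{lem:innerprod} to the pair $(\tr\chi_1,\chi_2)$ rather than $(\tr\chi_1,\tr\chi_2)$; your symmetric treatment and the explicit identity $\rho_i=\frac{[\Q(\tau_i):\Q]}{[\Q(\chi_i):\Q]}\Ind^{Q_i}_{H_i}\tr\chi_i$ are arguably cleaner.
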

\begin{proof}
\beq
\mu(\rho_2,\Res_{Q_2}\Ind^{G}_{Q_1}\rho_1)
  & = & \langle \tau_2, \Res_{Q_2}\Ind^G(\sum \tau_1^\sigma)\rangle\\
  &=& \langle \Ind^G_{H_2}\chi_2,
  \Ind_{H_1}^G (\sum_{\sigma\in \Gal(\Q(\tau_1)/\Q)}
  (\chi_1)^\sigma)\rangle\\
  &=& \frac{1}{[\Q(\chi_1):\Q(\tau_1)]}\langle \Ind^G_{H_2}\chi_2,
  \Ind_{H_1}^G
  (\tr\chi_1)\rangle\\[10pt]
  &=&\frac{1}{[\Q(\chi_1):\Q(\tau_1)]|H_1|\cdot|H_2|}
  \sum_{[C]\in CC_{\cyc}(G)}|N_G(C)|\phi(|C|)\cdot \\
  & & \qquad \cdot \sum_{D_1\leq H_1\atop D_1\sim C}\tr\chi_1(D_1)\cdot
   \sum_{D_2\leq H_2\atop D_2\sim C}\tr^*\chi_2(y)\\[10pt]
  &=&\frac{[\Q(\tau_1):\Q]}{|H_1|\cdot|H_2|}
  \sum_{[C]\in CC_{\cyc}(G)}|N_G(C)|\phi(|C|)\cdot\\
  & & \qquad\cdot 
  \sum_{D_1\leq H_1\atop D_1\sim C}\tr^*\chi_1(D_1)\cdot
  \sum_{D_2\leq H_2\atop D_2\sim C}\tr^*\chi_2(D_2).
\eeq
\end{proof}

\begin{lemma}\label{lem:trace}
If $C$ is a cyclic group, and $\chi$ is a 1-dimensional character of $C$, 
then $(\tr^*\chi)(C) = \mu(\ord(\chi))/\phi(\ord(\chi))$,
where $\mu$ is the Moebius mu function, and $\ord(\chi)$ is the smallest natural number
$n$ such that $\chi^{n}=\triv$.
\end{lemma}
\begin{proof}
It is enough to prove the lemma for faithful characters $\chi$, since we
may, without loss of generality, replace $C$ by $C/\ker\chi$.
Let $g$ be a generator of $C$. Then
$$
(\tr^*\chi)(C) = \frac{1}{[\Q(\chi):\Q])}\tr\chi(g)=\frac{1}{\phi(\ord(\chi))}\tr\chi(g).
$$
If $|C|=n$, then $\chi(g)$ is a primitive $n$-th root of unity, and the fact that
its trace is $\mu(n)$ is classical.
\end{proof}

\begin{corollary}
\label{cor:muindres}
Let $G$ be a group and $p^r$ a prime power.
Then $\CH(G)$ has an element of order $p^r$
if and only if there exist two $p$-quasi-elementary subgroups $Q_1$,
$Q_2$ of $G$, irreducible monomial characters $\tau_i=\Ind_{H_i}^{Q_i}\chi_i$
of $Q_i$, and an integer $k$,
such that
\begin{itemize}
\item
the rational character $\tr\tau_2$ has order divisible by
$p^{k+r}$ in $\CH(Q_2)$, and
\item 
the rational number
\beq
\lefteqn{\frac{[\Q(\tau_1):\Q]}{|H_1||H_2|}\cdot\!\!\!\!
  \sum_{[U]\in CC_{\cyc}(G)}|N_G(U)|\phi(|U|)\cdot}\\
  & &\displaystyle\cdot \sum_{D_1\leq H_1\atop D_1\sim U}
  \frac{\mu([D_1:D_1\cap\ker\chi_1])}{\phi([D_1:D_1\cap\ker\chi_1])}\cdot
  \sum_{D_2\leq H_2\atop D_2\sim U}
    \frac{\mu([D_2:D_2\cap\ker\chi_2])}{\phi([D_2:D_2\cap\ker\chi_2])}
\eeq
has $p$-adic valuation at most $k$.
\end{itemize}
In this case, $\Ind^G_{Q_1}\tr{\tau}_1$ has order divisible by $p^r$ in $\CH(G)$.
\end{corollary}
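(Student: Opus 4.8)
The plan is to reduce the order statement for $\CH(G)_p$ to the structure theorem of Corollary \ref{mainreductionindres}, which identifies $\CH(G)_p$ with the image of the matrix of $\Res\Ind$ maps $\alpha_{S,T}$ between the $\CH(Q)$ of $p$-quasi-elementary subgroups. By Proposition \ref{mainreductionp} applied to $\cR=\Char_\Q$, it suffices to work $p$-locally, and I may take $\cF_p$ to consist of (representatives of) all $p$-quasi-elementary subgroups. Thus $\CH(G)$ has an element of order $p^r$ precisely when the composite $\prod_T\sum_S\alpha_{S,T}$ has a nontrivial $p^r$-torsion element in its image, equivalently when there exist $Q_1,Q_2\in\cF_p$ and a class $\rho_1\in\CH(Q_1)$ whose image $\Res_{Q_2}\Ind^G_{Q_1}\rho_1$ has order divisible by $p^r$ in $\CH(Q_2)$.

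Next I would make the multiplicity of a given $\Q$-irreducible constituent in such a $\Res\Ind$ explicit. Every $\Q$-irreducible character of a quasi-elementary group is the trace of a monomial character (this is how the relevant classes arise in $\CH(Q_i)$, cf. the faithful case in Proposition \ref{prop:mainresfaithful}), so I may write $\rho_i=\tr\tau_i$ with $\tau_i=\Ind_{H_i}^{Q_i}\chi_i$ and the $\chi_i$ one-dimensional, taking $\tau_2$ irreducible so that $\mu(\rho_2,-)$ is well-defined. Corollary \ref{cor:mu} then computes $\mu(\rho_2,\Res_{Q_2}\Ind^G_{Q_1}\rho_1)$ as the displayed sum over $CC_{\cyc}(G)$ involving $\tr^*\chi_1(D_1)$ and $\tr^*\chi_2(D_2)$; and Lemma \ref{lem:trace} evaluates each $\tr^*\chi_i(D_i)$ as $\mu([D_i:D_i\cap\ker\chi_i])/\phi([D_i:D_i\cap\ker\chi_i])$, which is exactly the second bullet's rational number. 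So the $p$-adic valuation of this multiplicity is at most $k$ precisely when the stated arithmetic condition holds.

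The final link is to convert the valuation of this multiplicity into the order of $\Ind^G_{Q_1}\rho_1$ in $\CH(G)_p$. The point is that the order of a class in $\CH(G)_p$ is governed, via the injection of Corollary \ref{mainreductionindres}, by the largest power of $p$ by which some constituent's multiplicity fails to be "absorbed" into $\Perm$. Concretely, if $\tr\tau_2$ has order divisible by $p^{k+r}$ in $\CH(Q_2)$ while the coefficient $\mu(\rho_2,\Res_{Q_2}\Ind^G_{Q_1}\rho_1)$ has valuation at most $k$, then the $p^{k+r}$-torsion of the $\rho_2$-component survives after multiplication by a unit-times-$p^k$ scalar, leaving an element of order at least $p^r$; conversely, any order-$p^r$ element of $\CH(G)_p$ must, after restricting to some $Q_2$, pair against some irreducible constituent $\rho_2$ with exactly this numerical profile. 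I would formalize this by analysing the $\Z_p$-module structure: $\rho_2$ generates a cyclic summand of $\CH(Q_2)_p$ of order its order there, and the multiplicity records the image of $\Ind^G_{Q_1}\rho_1$ in that summand.

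The main obstacle I expect is this last bookkeeping step — matching "order divisible by $p^r$ in $\CH(G)$" to the pair of conditions (order of $\tr\tau_2$ in $\CH(Q_2)$ versus valuation of the multiplicity) cleanly in both directions. The forward implication (producing an order-$p^r$ element from the two bullets) is the robust part, since one explicitly exhibits $\Ind^G_{Q_1}\tr\tau_1$ and bounds its order below using the surviving $\rho_2$-component. The reverse implication requires knowing that restriction to the family $\cF_p$ detects the full order, which is where Proposition \ref{mainreductionp} and the reduction to a single maximal quasi-elementary $Q_2$ do the work; one must also ensure that restricting to monomial $\tau_1,\tau_2$ loses no generality, which follows because $\CH(Q)_p$ is generated by traces of such monomial characters.
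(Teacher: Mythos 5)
Your proposal is correct and follows essentially the same route as the paper: reduce via Corollary \ref{mainreductionindres} to the order of $\Res_{Q_2}\Ind^G_{Q_1}\rho_1$ in $\CH(Q_2)$, use the fact that quasi-elementary groups are M-groups to write the $\tau_i$ as monomial characters, and then evaluate the multiplicity by Corollary \ref{cor:mu} together with Lemma \ref{lem:trace}. The ``bookkeeping'' you worry about at the end is exactly what the paper delegates to Corollary \ref{mainreductionindres} together with the direct-sum decomposition of $\CH(Q_2)$ coming from Theorem \ref{thm:Berz} (quasi-elementary groups being supersoluble), which is the cyclic-summand structure you invoke.
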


\begin{remark}\label{simplification}\par\noindent
\begin{itemize}
\item 
Note that it is enough to take the last two sums in the above
formula only over those $D_i$ for which $D_i\cap \ker \chi_i$ has square-free
index in $D_i$, since for the others $\mu(\ord(\Res_{D_i}\chi_i))=0$.
For example if $\chi_i$ are faithful, then the outer sum may be taken over $U$ of
square free order.
\item
If, say, $H_1$ is cyclic, the sum $\sum_{D_1\leq H_1\atop D_1\sim U}$ has 
at most one term for every $U$.
\item
If $Q_1$, $Q_2$ are basic and $H_1$, $H_2$ are cyclic, then
Proposition~\ref{prop:mainresfaithful} gives a simple expression for the 
order of $\tr\tau_2$ in $\CH(Q_2)$.
\end{itemize}
\end{remark}

\begin{proof}[Proof of Corollary \ref{cor:muindres}]
By Corollary \ref{mainreductionindres}, $\CH(G)_p$ has an element of order
$p^r$ if and only if there exist $p$-quasi-elementary subgroups $Q_1$,
$Q_2$, and characters $\rho_i\in \Irr_{\Q}(Q_i)$,
such that $\rho_2$ has order $p^{k+r}$ in $\CH(Q_2)$ for some 
$k$, and $\mu(\rho_2, \Res_{Q_2}\Ind^G\rho_1)$ has $p$-adic valuation at most
$k$. Quasi-elementary groups are M-groups, so if $\tau_i$ is a complex
irreducible constituent of $\rho_i$, then there exist subgroups
$H_i\leq Q_i$ such that $\tau_i=\Ind_{H_i}^{Q_i}\chi_i$
for 1-dimensional characters $\chi_i\in\Irr(H_i)$.
The result therefore follows from Corollary \ref{cor:mu} in combination
with Lemma \ref{lem:trace}.
\end{proof}

\section{Quasi-elementary groups}\label{sQE}

The aim of this section is to provide several formulae of theoretical and
algorithmic interest for the orders of characters in 
$\CH(G)$ and $\C(G)$ when $G$ is quasi-elementary. Let
$G=C\rtimes P$ with $P$ a $p$-group
and $C$ cyclic of order coprime to $p$; we identify $P$ with a Sylow
subgroup of $G$.


\begin{lemma}\label{etatheta}
  Let $N$ be a normal subgroup of a finite group $G$, let $\eta$ be an
  irreducible character of $N$, and let $\theta$ be a complex irreducible
  constituent of $\Ind_N^G\eta$. Write $\cG_\eta=\Gal(\Q(\eta)/\Q)$, and similarly
  for $\cG_\theta$. Then
  $$
    \frac{[\Q(\eta):\Q]}{[\Q(\theta):\Q]} =
    \frac{\#\{\gamma\in \cG_\eta\;|\;\langle\eta^\gamma,\Res_N\theta\rangle\neq 0\}}
    {\#\{\gamma\in \cG_\theta\;|\;\langle\Ind_N^G\eta,\theta^\gamma\rangle\neq 0\}}.
  $$
  In particular, if $\Ind_N^G\eta$ is irreducible, then 
  $$
  \frac{[\Q(\eta):\Q]}{[\Q(\theta):\Q]} = \#\{\gamma\in \cG_\eta\;|\;\langle\eta^\gamma,\Res_N\theta\rangle\neq 0\}.
  $$
\end{lemma}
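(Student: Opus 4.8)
The plan is to prove the identity by a Galois-equivariant double count. Let $m=|G|$ and work inside $K=\Q(\zeta_m)$, which contains all values of all characters of $G$ and of $N$; write $\Gamma=\Gal(K/\Q)$, an \emph{abelian} group acting on the characters of $G$ and of $N$. The Galois conjugates of $\eta$ are exactly $\{\eta^\gamma:\gamma\in\cG_\eta\}$, forming a single $\Gamma$-orbit of size $[\Q(\eta):\Q]$ (the restriction $\Gamma\to\cG_\eta$ is onto and $\eta^\gamma$ depends only on $\gamma|_{\Q(\eta)}$), and likewise for $\theta$. I would then study the set of pairs
$$
S=\{(\alpha,\beta): \alpha \text{ a Galois conjugate of }\eta,\ \beta \text{ a Galois conjugate of }\theta,\ \langle\alpha,\Res_N\beta\rangle\neq0\},
$$
and compute $|S|$ through its two projections.

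The first step is to verify that $\Gamma$ acts diagonally on $S$. This rests on three compatibilities: Galois action commutes with $\Res_N$ (immediate, as restriction only reads off values on $N$); it commutes with $\Ind_N^G$ (apply $\gamma$ term by term in the induction formula); and, crucially, it preserves inner products of genuine characters, $\langle\mu^\gamma,\nu^\gamma\rangle=\langle\mu,\nu\rangle$. The last point is where abelianness enters: $\langle\mu^\gamma,\nu^\gamma\rangle=\gamma(\langle\mu,\nu\rangle)$ because complex conjugation is central in $\Gamma$ and hence commutes with $\gamma$, after which $\gamma$ fixes $\langle\mu,\nu\rangle\in\Z$. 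I expect this equivariance to be the main (conceptual rather than technical) obstacle, since the entire symmetry of the count hinges on getting it exactly right. Note also that $S\neq\emptyset$, since $\theta$ is a constituent of $\Ind_N^G\eta$ gives $(\eta,\theta)\in S$ by Frobenius reciprocity.

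Next I would exploit the equivariance. Each projection $\pi_1:S\to\{\text{conjugates of }\eta\}$ and $\pi_2:S\to\{\text{conjugates of }\theta\}$ is $\Gamma$-equivariant onto a transitive $\Gamma$-set, hence surjective with all fibres of equal cardinality; call these common sizes $a$ and $b$. Counting $|S|$ in two ways then yields $[\Q(\eta):\Q]\cdot a=[\Q(\theta):\Q]\cdot b$. Finally I would identify the fibres over the base points: the fibre of $\pi_1$ over $\eta$ is $\{\beta:\langle\eta,\Res_N\beta\rangle\neq0\}$, which via Frobenius reciprocity and the bijection $\gamma\mapsto\theta^\gamma$ has size $a=\#\{\gamma\in\cG_\theta:\langle\Ind_N^G\eta,\theta^\gamma\rangle\neq0\}$, the denominator; symmetrically the fibre of $\pi_2$ over $\theta$ gives $b=\#\{\gamma\in\cG_\eta:\langle\eta^\gamma,\Res_N\theta\rangle\neq0\}$, the numerator. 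Rearranging produces the stated identity.

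For the ``in particular'' clause, if $\Ind_N^G\eta$ is irreducible then it equals its unique constituent $\theta$, so the denominator counts those $\gamma\in\cG_\theta$ with $\langle\theta,\theta^\gamma\rangle\neq0$, i.e. with $\theta^\gamma=\theta$. Since $\Q(\theta)/\Q$ is abelian and $\cG_\theta$ acts simply transitively on the conjugates of $\theta$, only $\gamma=\mathrm{id}$ qualifies, the denominator is $1$, and the formula collapses to the displayed special case.
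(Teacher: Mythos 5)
Your proof is correct and follows essentially the same route as the paper: both double-count the incidence pairs $(\alpha,\beta)$ of Galois conjugates of $\eta$ and $\theta$ with $\langle\alpha,\Res_N\beta\rangle\neq 0$, using Galois-equivariance to see that the fibres of each projection have constant size. Your write-up just makes the equivariance and fibre identifications more explicit than the paper's terse ``inclusion--exclusion count''.
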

\begin{proof}
  The $G$-action on the characters of $N$ commutes with the Galois action.
  Every Galois conjugate of $\theta$ is a constituent of $\Ind^G\eta^\gamma$
  for some $\gamma\in \cG_\eta$, and moreover the 
  number of distinct Galois conjugates of $\theta$ in $\eta^\gamma$ is independent
  of $\gamma$. Also,
  the number of Galois conjugates of $\eta$ in $\Res_N\theta^\gamma$ is
  independent of $\gamma\in\cG_{\theta}$. So an inclusion--exclusion count gives
  $$
  \#\cG_\theta=\#\cG_{\eta}\cdot
  \frac{\#\{\gamma\in \cG_\theta\;|\;\langle\Ind_N^G\eta,\theta^\gamma\rangle\neq 0\}}
  {\#\{\gamma\in \cG_\eta\;|\;\langle\eta^\gamma,\Res_N\theta\rangle\neq 0\}}.
  $$
\end{proof}

\begin{lemma}\label{dimhull}
Let $\eta$ be an irreducible complex representation of $G$, with rational hull $\hat\eta$.
Then
$$
  \dim\hat\eta = \dim \eta\cdot\schur(\eta)\cdot[\Q(\eta):\Q].
$$
\end{lemma}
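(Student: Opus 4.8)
The plan is to unwind the definition of the rational hull and compute the dimension as the character value at the identity, using that dimension is a Galois-invariant quantity. By definition, $\hat\eta = \schur(\eta)\tr\eta = \schur(\eta)\sum_{\sigma\in\Gal(\Q(\eta)/\Q)}\eta^\sigma$, so since $\dim$ of a (virtual) character is its value at $1$ and is additive over sums, I would first write
$$
  \dim\hat\eta = \hat\eta(1) = \schur(\eta)\sum_{\sigma\in\Gal(\Q(\eta)/\Q)}\eta^\sigma(1).
$$

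The key observation is that each Galois conjugate has the same dimension as $\eta$. Indeed $\eta^\sigma(1) = \sigma\bigl(\eta(1)\bigr) = \sigma(\dim\eta) = \dim\eta$, because $\dim\eta = \eta(1)$ is a positive rational integer and is therefore fixed by every field automorphism $\sigma$. Thus every summand equals $\dim\eta$, and the sum reduces to counting the terms.

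It remains to count the number of terms, which is $|\Gal(\Q(\eta)/\Q)|$. Since the field of character values $\Q(\eta)$ is contained in a cyclotomic field, the extension $\Q(\eta)/\Q$ is abelian and in particular Galois, so $|\Gal(\Q(\eta)/\Q)| = [\Q(\eta):\Q]$. Combining, $\sum_\sigma\eta^\sigma(1) = [\Q(\eta):\Q]\cdot\dim\eta$, and multiplying by $\schur(\eta)$ yields the claimed formula $\dim\hat\eta = \dim\eta\cdot\schur(\eta)\cdot[\Q(\eta):\Q]$.

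This is essentially a direct computation from the definitions, so there is no genuine obstacle; the only point requiring a moment's care is the Galois-invariance of the dimension (as opposed to general character values), which follows purely from $\eta(1)\in\Z$. One could alternatively phrase the count as saying that the distinct conjugates $\eta^\sigma$ are permuted simply transitively by $\Gal(\Q(\eta)/\Q)$ acting through its quotient by the trivial stabiliser of $\eta$, but since $\Q(\eta)$ is by definition the fixed field of that stabiliser, summing over the full Galois group already produces exactly $[\Q(\eta):\Q]$ terms with each conjugate counted once.
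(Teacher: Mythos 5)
Your proof is correct and is essentially the paper's own argument: both unwind the definition $\hat\eta = \schur(\eta)\sum_{\gamma\in\Gal(\Q(\eta)/\Q)}\eta^\gamma$ and read off the dimension, with your version merely making explicit the (easy) facts that each conjugate has the same dimension and that the sum has $[\Q(\eta):\Q]$ terms.
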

\begin{proof}
  The rational hull of $\eta$ is given by
  $$
  \hat\eta = \schur(\eta)\sum_{\gamma\in\Gal(\Q(\eta)/\Q)}\eta^\gamma,
  $$
  whence the claim follows.
\end{proof}

\begin{theorem}\label{thm:innerprod}
Let $G=C\rtimes X$ with $C$ cyclic of order coprime to $|X|$. Let
$\tau$ be a complex irreducible character of $G$ with rational hull
$\rho=\hat\tau$,
let $\pi$ be a complex
irreducible constituent of $\Res_X\tau$ with rational hull $\hat{\pi}$,
$\psi$ an irreducible constituent of $\Res_C\tau$ with rational hull
$\hat\psi$, $K_{\psi}$ the stabiliser of $\psi$
under the $X$-action on $\Irr(C)$, and let $\xi$ be a complex irreducible
constituent of $\Res_{K_{\psi}}\pi$.
Then
\begin{eqnarray*}
  \lefteqn{\mu({\rho},\Ind_X^G{\hat\pi}) =}\\
  & & = \frac{\schur(\pi)}{\schur(\tau)}
     \langle\xi,\Res_{K_{{\psi}}}\pi\rangle\cdot 
     \#\{\text{Galois conjugates $\pi'$ of $\pi \;|\; \langle\Res_{K_{\psi}}\pi',\xi\rangle\neq 0$}\}\\
     & & = \frac{\dim{{\hat\psi}}\dim{\hat\pi}}{\dim{\rho}}.
\end{eqnarray*}

\end{theorem}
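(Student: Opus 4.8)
The plan is to reduce the multiplicity to a computation with complex characters, determine $\Res_X\tau$ by Clifford theory, and then translate the resulting integer counts into field degrees and dimensions via Lemmas~\ref{etatheta} and~\ref{dimhull}. First I would pass from rational to complex characters. Since $\rho=\hat\tau$ and $\Ind_X^G\hat\pi$ is $\Q$-valued, the multiplicity of the rational irreducible $\rho$ in $\Ind_X^G\hat\pi$ equals the multiplicity of its complex constituent $\tau$ divided by $\schur(\tau)$. Frobenius reciprocity together with $\hat\pi=\schur(\pi)\tr\pi$ then gives
\[
  \mu(\rho,\Ind_X^G\hat\pi)=\frac{1}{\schur(\tau)}\langle\Res_X\tau,\hat\pi\rangle
    =\frac{\schur(\pi)}{\schur(\tau)}\langle\Res_X\tau,\tr\pi\rangle,
\]
so the factor $\schur(\pi)/\schur(\tau)$ is already accounted for, and everything comes down to $\Res_X\tau$ and its pairing with the Galois orbit of $\pi$.

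Next I would compute $\Res_X\tau$ by Clifford theory over the normal cyclic subgroup $C$. As $\psi$ is fixed by $K_\psi$ and $C\cap K_\psi=1$ inside $C\rtimes X$, the linear character $\psi$ extends to a linear character $\tilde\psi$ of $I:=C\rtimes K_\psi$ by $\tilde\psi(ck)=\psi(c)$; by Gallagher's theorem $\tau=\Ind_I^G(\tilde\psi\cdot\beta)$ for a unique $\beta\in\Irr(K_\psi)$ inflated to $I$. Because $G=XC$ with $C\le I$ there is a single $(X,I)$-double coset and $X\cap I=K_\psi$, so Mackey's formula collapses to $\Res_X\tau=\Ind_{K_\psi}^X\beta$. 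In particular $\pi$ is a constituent of $\Ind_{K_\psi}^X\beta$, so by Frobenius reciprocity $\beta$ is a constituent of $\Res_{K_\psi}\pi$, and it is Galois conjugate to the chosen $\xi$ (here one uses that $X$ acts on the cyclic $C$ through the abelian group $\Aut(C)$, so that $\ker(X\to\Aut(C))\le K_\psi$ controls the constituents of $\Res_{K_\psi}\pi$ and makes $\langle\xi,\Res_{K_\psi}\pi\rangle\,N_\xi$ depend only on the Galois orbit of $\xi$). Frobenius reciprocity then gives $\langle\Res_X\tau,\tr\pi\rangle=\langle\beta,\Res_{K_\psi}\tr\pi\rangle=\langle\xi,\Res_{K_\psi}\tr\pi\rangle$.

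For the first equality I would then factor this inner product. Since $\Res_{K_\psi}\tr\pi$ is $\Q$-valued, and $\Res$ commutes with the Galois action, the multiplicity $\langle\xi,\Res_{K_\psi}\pi'\rangle$ should be the same for every Galois conjugate $\pi'$ of $\pi$ in which $\xi$ occurs --- the same independence phenomenon used in the proof of Lemma~\ref{etatheta}, now for the conjugates of $\pi$. Granting this uniform multiplicity, summing over the conjugates gives $\langle\xi,\Res_{K_\psi}\tr\pi\rangle=\langle\xi,\Res_{K_\psi}\pi\rangle\cdot N_\xi$, with $N_\xi$ the number of such conjugates, which is exactly the first displayed equality.

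For the second equality, Lemma~\ref{dimhull} (with $\schur(\psi)=1$, $\dim\psi=1$) rewrites the target as
\[
  \frac{\dim\hat\psi\,\dim\hat\pi}{\dim\rho}
    =\frac{\schur(\pi)}{\schur(\tau)}\cdot
     \frac{[\Q(\psi):\Q]\,\dim\pi\,[\Q(\pi):\Q]}{\dim\tau\,[\Q(\tau):\Q]},
\]
so it remains to identify the integer $\langle\xi,\Res_{K_\psi}\pi\rangle\,N_\xi$ with this ratio of dimensions and field degrees. The dimension part is the index identity $\dim\tau=[X:K_\psi]\dim\beta$ coming from $\tau=\Ind_I^G(\tilde\psi\beta)$, while the degree ratio $[\Q(\psi):\Q]/[\Q(\tau):\Q]$ and the factor $[\Q(\pi):\Q]$ are produced by feeding the Clifford datum into Lemma~\ref{etatheta} (applied to $C\normal G$ with $\eta=\psi$, $\theta=\tau$), together with a count of how the Galois orbit of $\psi$ meets its $X$-orbit $\{\psi^x\}$. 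I expect the main obstacle to be precisely this simultaneous Galois bookkeeping: verifying the uniform-multiplicity statement of the previous paragraph and checking that Galois conjugacy and $X$-conjugacy of the linear characters of $C$, and Galois conjugacy of the $\pi'$, assemble to give exactly the displayed ratio with the Schur indices matching cleanly --- rather than the comparatively routine Clifford- and Mackey-theoretic reductions of the first two steps.
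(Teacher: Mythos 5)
Your treatment of the first equality is essentially correct and coincides with the paper's route: the Clifford--Gallagher description $\tau=\Ind_{CK_\psi}^G(\tilde\psi\beta)$, the single-double-coset Mackey computation giving $\Res_X\tau=\Ind_{K_\psi}^X\beta$, and the uniform multiplicity of $\xi$ in those Galois conjugates $\pi'$ of $\pi$ that contain it (which does hold, via $\langle\Res_{K_\psi}\pi^\gamma,\xi\rangle=\langle\Res_{K_\psi}\pi,\xi^{\gamma^{-1}}\rangle$ and the Clifford decomposition of $\Res_{K_\psi}\pi$). One small repair: $\beta$ need not be Galois conjugate to $\xi$, only $X$-conjugate; but that suffices because $K_\psi\normal X$ (the $X$-action on $\Irr(C)$ factors through the abelian group $\Aut(C)$) and $\Res_{K_\psi}\tr\pi$ is $X$-stable. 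This yields $\mu(\rho,\Ind_X^G\hat\pi)=\frac{\schur(\pi)}{\schur(\tau)}\,e\,N_\xi$ with $e=\langle\xi,\Res_{K_\psi}\pi\rangle$ and $N_\xi$ the number of Galois conjugates of $\pi$ containing $\xi$.

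The genuine gap is the second equality, which you explicitly defer. Unwinding Lemma \ref{dimhull}, what must be proved is
$$
  e\,N_\xi=\frac{[\Q(\psi):\Q]\,[\Q(\pi):\Q]}{[\Q(\tau):\Q]}\cdot\frac{\dim\pi}{\dim\tau},
  \qquad\text{with}\qquad e=[\Stab_X\xi:K_\psi]\,\frac{\dim\pi}{\dim\tau},
$$
i.e.\ $[\Stab_X\xi:K_\psi]\cdot N_\xi=[\Q(\psi):\Q]\,[\Q(\pi):\Q]\big/[\Q(\tau):\Q]$. The group $\Stab_X\xi$ enters unavoidably through the Clifford multiplicity $e=\dim\pi/([X:\Stab_X\xi]\dim\xi)$ and has to be cancelled against the Galois count $N_\xi$. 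Your plan --- a single application of Lemma \ref{etatheta} to $C\normal G$ together with a count of how the Galois orbit of $\psi$ meets its $X$-orbit --- involves only $K_\psi$ and never sees $\Stab_X\xi$, so it cannot produce this cancellation. The paper's mechanism is to apply Lemma \ref{etatheta} \emph{twice}: to $CK_\psi\normal G$ with $\eta=\psi\xi$, $\theta=\tau$ (after first reducing to the case that $\rho|_C$ is faithful, so that $CK_\psi=C\times K_\psi$), and to $K_\psi\normal X$ with $\eta=\xi$, $\theta=\pi$, giving $|H_1|=[\Q(\xi):\Q][\Q(\psi):\Q]/[\Q(\tau):\Q]$ and $|H_2|=N_\xi[\Q(\xi):\Q]/[\Q(\pi):\Q]$; the key additional step, absent from your outline, is that restriction of Galois automorphisms from $\Q(\psi\xi)$ to $\Q(\xi)$ induces a surjection $H_1\surjects H_2$ whose kernel is isomorphic to $\Stab_X\xi/K_\psi$. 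Without this identity (or an equivalent), the ``simultaneous Galois bookkeeping'' you flag as the main obstacle does not close.
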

\begin{proof}
%
We may assume that $\rho|_C$ is faithful, otherwise we prove the
result in the quotient $G/(\ker\rho\cap C)$. So
$K=K_{\psi}$ is assumed to be the kernel of the $X$-action on $C$.
Recall that $\psi$ denotes a complex constituent of $\tau|_C$.
In particular, $\tau=\Ind_{CK}^G \psi\xi$, as explained in \cite[Part II, \S 8.2]{SerLi}.
We have
\beq
  \rho &=& \displaystyle\schur(\tau)
  \sum\limits_{\scriptscriptstyle\gamma\in\Gal(\Q(\tau)/\Q)} \tau^\gamma; &&
    \dim\rho=\schur(\tau)[\Q(\tau):\Q]\dim\tau,\cr
  \hat\pi &=& \schur(\pi)\displaystyle
  \sum\limits_{\scriptscriptstyle\gamma\in\Gal(\Q(\pi)/\Q)} \pi^\gamma; &&
    \dim\hat\pi=\schur(\pi)[\Q(\pi):\Q]\dim\pi.\cr
\eeq
Thus
\beq
  \mu(\rho,\Ind_X^G\hat\pi) & = & \frac{1}{\schur(\tau)} \langle \tau, \Ind_X^G \hat\pi\rangle
  =
  \frac{1}{\schur(\tau)} \langle \Ind_{CK}^G \psi\xi, \Ind_X^G \hat\pi\rangle\\[4pt]
  &=& \frac{1}{\schur(\tau)} \langle \Res_X \Ind_{CK}^G \psi\xi, \hat\pi\rangle
  = \frac{1}{\schur(\tau)} \langle \Ind_K^X \xi, \hat\pi\rangle
  = \frac{1}{\schur(\tau)} \langle \xi, \Res_K \hat\pi\rangle,\cr
\eeq
where the last line follows from Mackey's formula, noting that
$CK\backslash G/X$ consists of one double coset, and that
$CK\cap X=K$.

Next, $X$ acts on the representations of $K$ by conjugation, 
and there is a Clifford theory decomposition
\beql{piclif}
  \Res_K \pi = e\sum\limits_{\scriptscriptstyle g\in X/\Stab_X\xi} \xi^g.
\eeql
Recall that the constituents of $\hat\pi$ are Galois conjugates
of $\pi$, and we select those whose restriction to $K$ contains $\xi$:
$$
  \Omega = \bigl\{\gamma\in\Gal(\Q(\pi):\Q) \bigm| 
     \langle \Res_K\pi^\gamma,\xi\rangle\ne 0\bigr\}.
$$
The inner product $\langle \Res_K\pi^\gamma,\xi\rangle=\langle \Res_K\pi,\xi^{\gamma^{-1}}\rangle$
is the same (and equals $e$) for every $\gamma\in\Omega$, since 
$\xi^{\gamma^{-1}}$ is irreducible and so must be one of $\xi^g$ in \eqref{piclif}.
So we have
$$
  \frac{1}{\schur(\tau)} \langle \xi, \Res_K \hat\pi\rangle 
    = \frac{\schur(\pi)}{\schur(\tau)} |\Omega| \langle \xi, \Res_K \pi\rangle,
$$
which proves the first equality.

It remains to show that
\beql{mudims}
  \frac{\schur(\pi)}{\schur(\tau)} |\Omega| \langle \xi, \Res_K \pi\rangle = \frac{\dim{\hat\psi}\dim\hat\pi}{\dim\rho}.
\eeql

By comparing dimensions in \eqref{piclif}, and since $\tau=\Ind_{CK}^G\psi\xi$,
we see that
$$
  \langle \xi, \Res_K \pi\rangle = e = \frac{\dim\pi}{[X:\Stab_X\xi]\dim \xi}
  = \frac{[X:K]\dim\pi}{[X:\Stab_X\xi]\dim \tau} = \frac{[\Stab_X\xi:K]\dim\pi}{\dim\tau},
$$
so
$$
  \mu(\rho,\Ind^G\hat\pi) = \frac{\schur(\pi)}{\schur(\tau)} |\Omega| \langle \xi, \Res_K \pi\rangle =
  |\Omega|\cdot [\Stab_X\xi:K]\frac{\schur(\pi)\dim\pi}{\schur(\tau)\dim\tau}.
$$

Consider the two groups
\begin{eqnarray*}
H_1 & = & \{\gamma\in \Gal(\Q(\psi\xi)/\Q) \; |\; \langle (\psi\xi)^\gamma, \Res_{CK}\tau\rangle\neq 0\},\\
H_2 & = & \{\gamma\in \Gal(\Q(\xi)/\Q)\;|\;\langle\xi^\gamma,\Res_K\pi\rangle\neq 0\}.
\end{eqnarray*}
There is a natural projection $H_1\surjects H_2$ given by the restriction of
Galois action to $\Q(\xi)$, whose kernel consists of precisely those elements
of $\Gal(\Q(\psi\xi)/\Q)$ that act trivially on $\xi$, and through the action
of some $g\in X$ on $\psi$ (this last condition is equivalent to the Galois
element being in $H_1$). Thus, the kernel is isomorphic to the subgroup
of $G/CK$ that acts trivially on $\xi$, i.e. to $\Stab_X\xi/K$. We
deduce that 
$$
\mu(\rho,\Ind^G\hat\pi) = |\Omega| \frac{|H_1|}{|H_2|}\frac{\schur(\pi)\dim\pi}{\schur(\tau)\dim\tau}.
$$

Now, by applying Lemma \ref{etatheta} first to $CK\normal G$ with $\theta=\tau$,
$\eta=\psi\xi$,
and then to $K\normal X$ with $\theta = \pi$, $\eta=\xi$, we find that
$$
|H_1|  =  \frac{[\Q(\xi):\Q][\Q(\psi):Q]}{[\Q(\tau):\Q]} \qquad\text{and}\qquad
|H_2|  =  |\Omega|\frac{[\Q(\xi):\Q]}{[\Q(\pi):\Q]},
$$
so that
\begin{eqnarray*}
\mu(\rho,\Ind^G\hat\pi) & = & |\Omega| \cdot\frac{|H_1|}{|H_2|}\frac{\schur(\pi)\dim\pi}{\schur(\tau)\dim\tau}\\
& = &|\Omega|\cdot \frac{[\Q(\xi):\Q]\cdot[\Q(\psi):\Q]\big/[\Q(\tau):\Q]}
{|\Omega|[\Q(\xi):\Q]\big/[\Q(\pi):\Q]}
\cdot\frac{\schur(\pi)\dim\pi}{\schur(\tau)\dim\tau}\\
 & = & \frac{[\Q(\psi):\Q]\cdot[\Q(\pi):\Q]\schur(\pi)\dim\pi}{[\Q(\tau):\Q]\schur(\tau)\dim\tau}
  =  \frac{\dim{\hat\psi}\dim\hat\pi}{\dim\rho},
\end{eqnarray*}
where the last equality follows from Lemma \ref{dimhull}.
\end{proof}

\begin{theorem}
\label{thmqemain1}
Let $G=C\rtimes P$ be $p$-quasi-elementary,
let $\rho$ be an irreducible rational representation of $G$. Let $\psi$ be a
complex irreducible constituent of $\Res_C\rho$ with rational hull $\hat\psi$,
and let $\hat\pi$ be a rational irreducible constituent of
$\Res_P\rho$ of minimal dimension.
Denote by $\pi$ a complex irreducible constituent of $\hat\pi$,
by $\xi$ a complex irreducible consitutent of $\pi|_{K_\psi}$, where
$K_{\psi}\leq P$ is the stabiliser in $P$ of $\psi$,
and by $\tau$ a complex irreducible constituent of $\rho$ such that $\Res_P\tau$ contains $\pi$. Then
\begin{eqnarray*}
  \lefteqn{\text{order of $\rho$ in $\C(G)$}
     =\mu(\rho,\Ind_P^G{\hat\pi})= \frac{\dim\hat\psi\dim{\hat\pi}}{\dim\rho}}\\
    & & =\frac{\schur(\pi)}{\schur(\tau)}
     \langle\xi,\Res_{K_\psi}\pi\rangle\cdot 
     \#\{\text{Galois conjugates $\pi'$ of $\pi \; |\; \xi\subset\Res_{K_\psi}\pi'$}\}.
\end{eqnarray*}
\end{theorem}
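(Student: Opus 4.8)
The final two equalities are exactly Theorem~\ref{thm:innerprod} applied with $X=P$, so the content of the statement is the first equality: writing $m=\mu(\rho,\Ind_P^G\hat\pi)$, I must show that the order of $\rho$ in $\C(G)$ equals $m$. Replacing $G$ by $G/(\ker\rho\cap C)$ — which preserves the order by Lemma~\ref{lem:lifts} and leaves the dimensions on the right unchanged — I may assume $\Res_C\rho$ is faithful, so that $\psi$ is faithful and $K_\psi=K=\ker(P\to\Aut(C))$. Since $G$ is quasi-elementary, hence supersoluble, Theorem~\ref{thm:Berz} identifies the order of $\rho$ in $\C(G)$ with $n_\rho=\gcd_{H\le G}\mu(\rho,\Q[G/H])$, so everything reduces to understanding this gcd.

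I would first dispose of most subgroups and obtain $n_\rho\mid m$. Writing $\tau=\Ind_{CK}^G\psi\xi$ and computing with Mackey's formula, $\mu(\rho,\Q[G/H])=\langle\Res_H\tau,\triv\rangle/\schur(\tau)$, which by faithfulness of $\psi$ vanishes unless $H\cap C=1$; such an $H$ is conjugate into $P$, and $\Q[G/H]$ is unchanged under conjugation, so only subgroups $H\le P$ contribute to the gcd. For the divisibility, note that $P$ is a $p$-group, so $\C(P)=1$ and hence $\hat\pi\in R_\Q(P)=\Perm(P)$; inducing, $\Ind_P^G\hat\pi=\sum_i a_i\,\Q[G/P_i]\in\Perm(G)$. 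By Theorem~\ref{thm:Berz} the coefficient of $\rho$ in any element of $\Perm(G)$ lies in $n_\rho\Z$, whence $n_\rho\mid m$.

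For the reverse divisibility $m\mid n_\rho$ it suffices to prove $m\mid\mu(\rho,\Q[G/H])$ for every $H\le P$. Here I would use $\Q[G/H]=\Ind_P^G\Q[P/H]$ and decompose the $P$-permutation module $\Q[P/H]=\bigoplus_\sigma \mu(\sigma,\Q[P/H])\,\sigma$ into rational irreducibles of $P$. Applying Theorem~\ref{thm:innerprod} to each constituent $\sigma$ of $\Res_P\rho$ (the others contribute $0$ by Frobenius reciprocity, with the same $\hat\psi$ throughout) gives $\mu(\rho,\Ind_P^G\sigma)=\dim\hat\psi\dim\sigma/\dim\rho=m\cdot(\dim\sigma/\dim\hat\pi)$. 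Summing, $\mu(\rho,\Q[G/H])=\frac{m}{\dim\hat\pi}\sum_\sigma\mu(\sigma,\Q[P/H])\dim\sigma$, so the theorem comes down to a divisibility internal to the $p$-group $P$: $\dim\hat\pi$ divides $\dim\sigma$ for every rational irreducible constituent $\sigma$ of $\Res_P\rho$, i.e. the minimal-dimensional rational constituent divides all the others.

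This last divisibility is the main obstacle. The starting point is $\Res_P\tau=\Ind_K^P\xi$ (another Mackey computation, using $G=CP$ and $K=P\cap CK$), so $\Res_P\rho=\schur(\tau)\sum_{\gamma}\Ind_K^P\xi^\gamma$. Because $P/K$ embeds in the abelian group $\Aut(C)$, the inertia quotient $T/K$ of $\xi$ in $P$ is abelian, and all irreducible projective representations of an abelian group for a fixed cocycle have equal dimension; via Clifford theory this forces every complex constituent of $\Ind_K^P\xi$ — and hence of $\Res_P\rho$ — to have the same dimension $\dim\pi$. Thus $\dim\sigma=\schur(\pi')[\Q(\pi'):\Q]\dim\pi$ for the complex constituent $\pi'$ of $\sigma$, and the claim reduces to showing that the minimal value of $\schur(\pi')[\Q(\pi'):\Q]$ divides all of them. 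For odd $p$ this is immediate: Schur indices of $p$-group characters are then $1$, and since $\Gal(\Q(\zeta_{p^k})/\Q)$ is cyclic the degrees $[\Q(\pi'):\Q]$ of the subfields $\Q(\pi')\subseteq\Q(\zeta_{p^k})$ form a divisibility chain. The hard case is $p=2$, where $\Gal(\Q(\zeta_{2^k})/\Q)$ is non-cyclic and Schur indices $2$ occur; there I would argue directly with the constituents $\Ind_T^P\theta$ produced above, tracking how the Galois action and the (at most quaternionic) Schur indices interact with induction from $T$, to confirm that the minimal $\schur(\pi')[\Q(\pi'):\Q]$ still divides the rest.
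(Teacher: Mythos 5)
Your overall route coincides with the paper's: reduce to $\Res_C\rho$ faithful, invoke Theorem~\ref{thm:Berz}, discard all $H$ meeting $C$ nontrivially, get one divisibility from the Ritter--Segal theorem, and reduce the other, via Theorem~\ref{thm:innerprod}, to the assertion that the minimal-dimensional rational irreducible constituent of $\Res_P\rho$ has dimension dividing that of every other rational irreducible constituent (the paper phrases this as ``$\gcd=\min$'' over the constituents $\hat\pi_j$). Up to that point everything you write is sound. The gap is in your proof of this last divisibility, in both branches.

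For odd $p$, your key claim --- that because $\Gal(\Q(\zeta_{p^k})/\Q)$ is cyclic, the degrees of the subfields of $\Q(\zeta_{p^k})$ form a divisibility chain --- is false: the subfields correspond to the divisors of $(p-1)p^{k-1}$, which are not totally ordered (e.g.\ $\Q(\zeta_7)$ has subfields of degrees $2$ and $3$). Cyclicity of the Galois group gives you nothing here. What is actually needed, and what the paper uses, is the nontrivial fact that for $p$ odd every nontrivial irreducible character $\pi'$ of a $p$-group has $[\Q(\pi'):\Q]$ equal to $p-1$ times a power of $p$ (\cite[Lemma 2.1]{TurGcdmin}); the set $\{1\}\cup\{(p-1)p^{a}\}$ \emph{is} such that its minimum divides every element, which closes the case. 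For $p=2$ you stop at a promissory note (``I would argue directly, tracking the Galois action and the Schur indices''), but this is in fact the easy case and needs no such analysis: $\schur(\pi')$, $\dim\pi'$ and $[\Q(\pi'):\Q]$ are each powers of $2$, so every $\dim\hat\pi_j=\schur(\pi_j)[\Q(\pi_j):\Q]\dim\pi_j$ is a power of $2$, and a set of powers of $2$ is totally ordered under divisibility. (Your observation that all complex constituents of $\Res_P\rho$ have equal dimension, via projective representations of the abelian inertia quotient, is correct but superfluous once one notes that each $\dim\pi_j$ is a power of $p$.) As it stands, then, the proposal is incomplete precisely at the arithmetic step that the theorem turns on.
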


\begin{proof}
We may assume that $\rho|_C$ is faithful, otherwise we prove the
result in the quotient $G/(\ker\rho\cap C)$ (see Lemma \ref{lem:lifts}). Thus,
$K=K_\psi$ is assumed to be the kernel of the $P$-action on $C$.
Under this assumption, if $H\le G$ intersects $C$ non-trivially, then
$$
  \langle\rho,\bC[G/H]\rangle_G = \langle \Res_H\rho, \triv\rangle_H=0.
$$
Write $o$ for the order of $\rho$ in $\C(G)$. 
By Theorem \ref{thm:Berz}, we have
\beq
  o\cdot\langle \rho,\rho\rangle &=&
    \gcd\limits_{H\le G}\langle\rho,\bC[G/H]\rangle_G =
    \gcd\limits_{H\le P}\langle\rho,\bC[G/H]\rangle_G \cr
   &=&
    \gcd\limits_{H\le P}\langle\rho|_P,\bC[P/H]\rangle_P.
\eeq
Because $\C(P)=1$ by the Ritter-Segal theorem \cite{Ritter, Segal}, we can replace the
permutation representations $\bC[P/H]$ by all rational representations of $P$
in the last term. This is clearly the same as just taking the
rational irreducible constituents ${\hat\pi}_1,...,{\hat\pi}_k$ of $\rho|_P$, so
\begin{equation}\label{eqo}
  o =
  \frac{1}{\langle\rho,\rho\rangle}\gcd\limits_j\langle \rho|_P,{\hat\pi}_j\rangle
    = \gcd\limits_j \frac{\langle\rho,\Ind_P^G{\hat\pi}_j\rangle}{\langle\rho,\rho\rangle}
    = \gcd\limits_j \mu(\rho,\Ind_P^G{\hat\pi}_j).
\end{equation}
The theorem will therefore follow from Theorem \ref{thm:innerprod}, once we show
that the gcd may be replaced by the term corresponding to any ${\hat\pi}$ of minimal
dimension. Now, by Theorem \ref{thm:innerprod} and by Lemma \ref{dimhull},
$$
\mu(\rho,\Ind_P^G{\hat\pi}_j) = \frac{\dim\hat\psi\dim{\hat\pi}_j}{\dim\rho} 
= \frac{\dim\hat\psi\,\schur(\pi_j)\dim\pi_j[\Q(\pi_j):\Q]}{\dim\rho},
$$
where $\pi_j$ is a complex irreducible constituent of ${\hat\pi}_j$.
We argue as in \cite[\S 2]{TurGcdmin}: if $p=2$, then
all the terms $\schur(\pi_j)$, $\dim\pi_j$, $[\Q(\pi_j):\Q]$ are powers
of 2, so gcd and minimum are the same. If $p$ is odd, then $\schur(\pi_j)=1$,
and moreover, either some $\pi_j=\triv$, in which case the claim is clear,
or else all $\dim\pi_j$ are powers of $p$, while all $[\Q(\pi_j):\Q]$
are $(p-1)$ times powers of $p$ (\cite[Lemma 2.1]{TurGcdmin}), so again
gcd and minimum are the same.
\end{proof}

\section{Examples: $\GL_2(\F_q)$, $\PGL_2(\F_q)$, $\SL_2(\F_q)$ and $\PSL_2(\F_q)$}
\label{sGL2}

\begin{theorem}
\label{gl2thm}
For every prime power $q=p^n$, the group $G=\GL_2(\F_q)$ has $\CH(G)~=~\{1\}$.
\end{theorem}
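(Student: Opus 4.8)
The plan is to apply the reduction of Corollary \ref{mainreduction} and kill $\CH(G)$ one prime at a time via Corollary \ref{mainreductionindres}: it suffices to show $\CH(G)_\ell = 1$ for every prime $\ell \mid |G|$, where the family $\cF_\ell$ is taken to consist of the maximal $\ell$-quasi-elementary subgroups $Q = C \rtimes \Syl_\ell(N_G(C))$, with $C$ cyclic of order prime to $\ell$. I would organise these according to $C$: its semisimple part lies in a split or non-split maximal torus, and the non-centralising part of $N_G(C)$ is controlled by the Weyl group, which here has order $2$. The aim is that for every odd prime, and for $\ell = p$, the groups $Q$ are tame enough to force $\CH(Q) = 1$, so that the only genuine content lies at $\ell = 2$ with $q$ odd.

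First I would dispose of the defining characteristic $\ell = p$. A Sylow $p$-subgroup of $\GL_2(\F_q)$ is the group of upper unitriangular matrices, isomorphic to $(\F_q,+)$ and hence elementary abelian; so every $p$-quasi-elementary subgroup has the shape $C \rtimes P$ with $C$ cyclic and $P$ elementary abelian, and Lemma \ref{A:Cp} gives $\CH(Q) = 1$ at once. In particular this settles $q$ even completely. For an odd prime $\ell \neq p$, a Sylow $\ell$-subgroup is abelian (the $\ell$-part of the split torus $C_{q-1}\times C_{q-1}$ when $\ell\mid q-1$, and cyclic inside the non-split torus when $\ell\mid q+1$), since the Weyl involution is invisible to odd $\ell$. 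On the semisimple part of $C$ the image of $N_G(C)$ in $\Aut(C)$ has order dividing $2$ (it is generated by a reflection or by Frobenius), hence is prime to $\ell$, so $\Syl_\ell(N_G(C))$ acts on $C$ only through automorphisms of a unipotent part (possible when $\ell\mid p-1$). Feeding this structure into Theorem \ref{thmqemain1}, where $P$ abelian forces $\dim\hat\pi = [\Q(\pi):\Q]$ and $\schur(\pi)=1$, I would check that the order $\frac{\dim\hat\psi\,\dim\hat\pi}{\dim\rho}$ of every rational irreducible $\rho$ equals $1$, giving $\CH(Q)_\ell = 1$.

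The crux is $\ell = 2$ with $q$ odd. Now the Weyl involution contributes, so the relevant Sylow $2$-subgroups are genuinely non-abelian: a wreath product $C_{2^a}\wr C_2$ when $q \equiv 1 \pmod 4$, and a (semi)dihedral-type group when $q \equiv 3 \pmod 4$. Crucially these are not generalized quaternion (unlike the Sylow $2$-subgroups of $\SL_2(\F_q)$), a point I would use to rule out Schur index $2$. The order formula of Proposition \ref{prop:mainresfaithful} shows that some of these $2$-quasi-elementary $Q$ can have $\CH(Q) \neq 1$, so the naive bound $\CH(Q)=1$ will not hold everywhere. Instead I would invoke the full Mackey criterion: by Corollary \ref{mainreductionindres} it is enough to show that for every pair $S,T$ of maximal $2$-quasi-elementary subgroups and every $\rho \in R_\Q(S)$, the class of $\Res_T\Ind_S^G\rho$ is trivial in $\CH(T)$, i.e. $\Res_T\Ind_S^G\rho \in \Perm(T)$. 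This I would verify through the explicit inner-product formula of Corollary \ref{cor:mu} (equivalently Corollary \ref{cor:muindres}), evaluating the $\Res\Ind$ pairings on the conjugacy classes of cyclic subgroups of $\GL_2(\F_q)$, which are completely explicit, and checking that the resulting rational numbers are $2$-adically integral so that no element of order $2^r$ survives.

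The main obstacle is precisely this prime-$2$ analysis: one must simultaneously pin down the isomorphism type of the Sylow $2$-subgroup and exclude any quaternionic constituent in both residue classes $q \equiv \pm 1 \pmod 4$, and then control the $\Res\Ind$ pairings of Corollary \ref{cor:mu} uniformly in $q$, showing that the local classes produced by Theorem \ref{thmqemain1} fail to assemble into a nonzero global class. By contrast the odd primes and the defining characteristic are essentially formal, following from Lemma \ref{A:Cp} and the abelianness of the odd Sylow subgroups.
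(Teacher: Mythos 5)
Your reduction is set up correctly and your handling of the defining characteristic $\ell=p$ (elementary abelian Sylow, Lemma \ref{A:Cp}) matches the paper. But there are two genuine gaps. First, and most importantly, everything after that is a plan rather than a proof: for odd $\ell\neq p$ you say you ``would check'' that Theorem \ref{thmqemain1} gives order $1$, and at $\ell=2$ you propose to ``verify'' the full $\Res\Ind$ criterion of Corollary \ref{cor:muindres} over all pairs of maximal $2$-quasi-elementary subgroups, uniformly in $q$ --- precisely the part you yourself identify as the crux. None of these computations is carried out, and the $\ell=2$ one as you have framed it is genuinely laborious. The idea you are missing is that Corollary \ref{mainreduction} does not require the covering family $\cF$ to consist of quasi-elementary groups: it is enough that every quasi-elementary subgroup embeds in some member. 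The paper therefore enlarges each maximal quasi-elementary $Q$ to a torus normaliser $\bar Q=(\F_q^\times\times\F_q^\times)\rtimes C_2$ or $\F_{q^2}^\times\rtimes C_2$, which is (abelian)$\rtimes$(elementary abelian $2$-group) and so has $\CH(\bar Q)=1$ by Lemma \ref{A:Cp}. This kills the split and non-split cases at \emph{all} primes, including $2$, with no inner-product computations and no discussion of wreath-product or semidihedral Sylow subgroups at all; whether individual quasi-elementary $Q$ have nontrivial $\CH(Q)$ becomes irrelevant.

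Second, your case analysis is incomplete: you organise the cyclic subgroups $C$ by the torus containing their semisimple part, but for $\ell\neq p$ (in particular $\ell=2$ with $p$ odd) there are maximal $\ell$-quasi-elementary subgroups built on non-semisimple $C=\langle g_sg_u\rangle$ with $g_u$ a nontrivial unipotent. Here $N_G(C)\cong\F_q^\times\times(\F_q\rtimes\F_p^\times)$, these subgroups are not contained in any torus normaliser, and your $\ell=2$ discussion (which speaks only of Weyl involutions and Sylow $2$-subgroups of torus normalisers) does not see them. This is exactly the paper's Case 4, where one embeds $Q$ into $(\F_q^\times\times\langle g_u\rangle)\rtimes\F_p^\times$ and invokes Proposition \ref{prop:mainresfaithful} with $K=\{1\}$ to put every irreducible character into $\Perm$. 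Without this case your argument, even if the torus computations were completed, would not cover all quasi-elementary subgroups and the injectivity statement of Corollary \ref{mainreduction} could not be applied.
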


\begin{proof}
By Corollary \ref{mainreduction}, it suffices to show that every maximal quasi-elementary
subgroup $Q=C\rtimes P$ of $G=\GL_2(\F_q)$ is contained in some $\bar Q\<G$ with $\CH(\bar Q)=1$.
Pick $C=\langle g\rangle$ cyclic, and let $P=\Syl_l(N_G(C))$ for some prime
number $l$. Write $f(t)$ for the characteristic polynomial of $g$.

{\bf Case 1} (split Cartan). 
Suppose $f(t)$ has distinct roots $a,b\in \F_q^\times$. Then $g$ is conjugate
to $\smallmatrix a00b$, and its centraliser is the split Cartan subgroup:
$$
  Z_G(C)\iso \F_q^\times \times \F_q^\times, \qquad 
  N_G(C)\>\<\>(\F_q^\times \times \F_q^\times)\rtimes C_2,
$$
with $C_2=\langle\smallmatrix 0110\rangle$.
Here $\bar Q=(\F_q^\times \times \F_q^\times)\rtimes C_2$
has trivial $\CH(\bar Q)$ by Corollary \ref{A:Cp}.

{\bf Case 2} (non-split Cartan). 
Suppose $f(t)$ is irreducible over $\F_q$. Then the centraliser of $C$ is
the non-split Cartan subgroup:
$$
  Z_G(C)\iso \F_q[g]^\times\iso\F_{q^2}^\times, \qquad 
  N_G(C)\>\<\>\F_{q^2}^\times\rtimes\Gal(\F_{q^2}/\F_q)\iso \F_{q^2}^\times\rtimes C_2.
$$
Again $\bar Q=\F_{q^2}^\times\rtimes C_2$ has trivial $\CH$ by Corollary \ref{A:Cp}.

{\bf Case 3} (scalars). Suppose $g=\smallmatrix a00a$ is a scalar matrix. Then
$Q=C\rtimes P$ can be embedded into one of the following:
\begin{itemize}
\item if $l=p$:
$\bar Q=C\times U=C\times \{\smallmatrix 1*01\}$; in this case
$U$ is an elementary abelian $p$-group; or
\item if either $l$ is odd and $l|(q-1)$, or $l=2$ and $q\equiv 1\mod 4$:
$\bar Q=H\rtimes C_2$ with $H=$ split Cartan; or
\item if either $l$ is odd and $l|(q+1)$, or $l=2$ and $q\equiv 3\mod 4$:
$\bar Q=H\rtimes C_2$ with $H=$ non-split Cartan.
\end{itemize}
In all these cases, $\CH(\bar Q)$ is trivial by Corollary \ref{A:Cp}.

{\bf Case 4} (non-semisimple). Finally suppose that $g$ is not semisimple, 
say $g=g_s g_u$ with $g_s$ central and $g_u=\smallmatrix 1u01$ non-trivial unipotent. 
Then
\beq
  N_G C=N_G \langle g_u \rangle &=& \Bigl\{ \smallmatrix ab0c \Bigm| 
    ac^{-1}\in\F_p^\times  \Bigr\} \\[4pt]
      &=& \Bigl\{\smallmatrix a00a|a\in\F_q^\times\Bigr\}\cdot 
        \Bigl\{\smallmatrix 1c01|c\in\F_q\Bigr\} \cdot
        \Bigl\{\smallmatrix b001|b\in\F_p^\times\Bigr\}\\[5pt]
     & \cong & \F_q^\times\times (\F_q\rtimes \F_p^\times).
\eeq
If $l=p$, then $Q$ can be embedded into $\bar Q=\langle g_s\rangle \times U$,
where $U=\{\smallmatrix 1*01\}\cong \F_q$ is an elementary abelian $p$-group.
In this case $\CH(\bar Q)$ is trivial by Corollary \ref{A:Cp}.
Otherwise, $Q$ can be embedded into
$\bar Q\cong(\F_q^\times\times \langle g_u\rangle)\rtimes \F_p^\times$,
where the action in the semi-direct product is faithful. If $\tau$ is
an irreducible character of $\bar Q$ such that $\Res_{\langle g_u\rangle}\tau$
is faithful, then $\bar Q/\ker\tau$ satisfies the assumptions of
Proposition \ref{prop:mainresfaithful} with
$K=\{1\}$, so $\tr\tau\in \Perm(\bar Q)$.
Otherwise, $\Res_{\langle g_u\rangle}\tau=\dim\tau\cdot\triv$,
so $\tau$ factors through an abelian quotient, and $\tr\tau\in \Perm(\bar Q)$
e.g. by Corollary \ref{A:Cp}.
\end{proof}

\begin{remark}
It is also not hard to deduce the structure of $\CH$ for the related classical groups:

\par\noindent
\begin{itemize}
\item $G=\PGL_2(\F_q)$. Combined with Lemma \ref{lem:lifts}, the theorem implies $\CH(G)=1$. 
\item $G=\SL_2(\F_q)$.
In general, $\CH(G)\ne 1$. For example,
$\SL_2(\F_3)$ has $\C=1$ and $\CH\iso \Z/2\Z$
(it has a 2-dimensional irreducible symplectic representation), 
and $\SL_2(\F_{17})$ has $\C\iso \Z/4\Z$.
\item $G=\PSL_2(\F_q)$.
It is a result of Solomon, announced in \cite{Sol}, that $\CH(G)=1$.
This can also be seen following the argument for $\GL_2$ in Theorem \ref{gl2thm}:
the analogues of $\bar Q$ are the images of $\bar{Q}\cap \SL_2(\F_q)$ 
in $\PSL_2(\F_q)$,
and they are dihedral in Cases 1 and 2 of the theorem, elementary abelian 
or dihedral ($p=2$) in Case 3 and isomorphic to $\F_p\rtimes\F_p^\times$ in Case 4.
Again, all these groups have $\CH=1$, so $\CH(G)=1$.
\end{itemize}
\end{remark}

\section{$\PSL_n(\F_p)$}
\label{sPSL}

Let $\ord_2$ denote the 2-adic valuation of a rational number,
$\ord_2\left(2^x\cdot{\displaystyle\frac ab}\right)=x$, where $2\nmid ab$.

\begin{theorem}
\label{pslmain}
Let $k\geq 4$ be an integer, and $p$ a prime.
The groups $\PSL_k(\F_p)$, and therefore also $\SL_k(\F_p)$,
have $\CH(G)$ of exponent divisible by $2^{\min(\ord_2(k),\ord_2(p-1))}$.
\end{theorem}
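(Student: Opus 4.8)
The plan is to work throughout at the prime $2$ and to feed a single well-chosen quasi-elementary subgroup into the criterion of Corollary \ref{cor:muindres} (with its prime taken to be $2$). Write $m=\min(\ord_2(k),\ord_2(p-1))$ and assume $p$ odd and $m\geq 1$, the statement being vacuous otherwise. Since $\PSL_k(\F_p)=\SL_k(\F_p)/Z$ for the centre $Z$, Lemma \ref{lem:lifts} gives an injection $\CH(\PSL_k(\F_p))\injects\CH(\SL_k(\F_p))$; so it is enough to produce an element of order divisible by $2^m$ in $\CH(G)$ for $G=\PSL_k(\F_p)$, and the $\SL$ statement then follows for free.

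First I would build a $2$-quasi-elementary subgroup $Q=C\rtimes P\leq G$ with $P\cong C_{2^{2m}}$ cyclic and $K=\ker(P\to\Aut C)$ of order $2^m$, so that $\CH(Q)$ is as large as Proposition \ref{prop:mainresfaithful} allows. The building block is a non-split Cartan together with a Frobenius twist inside $\GL_{2^m}(\F_p)$: let $C$ be a cyclic subgroup of odd order in $\F_{p^{2^m}}^\times$ on which the Frobenius $\phi\in\Gal(\F_{p^{2^m}}/\F_p)\cong C_{2^m}$ acts faithfully (possible because the order-$2$ power of $\phi$ is already nontrivial on the odd part of $\F_{p^{2^m}}^\times$), and set $M=s\phi$, where $s$ is chosen with norm $N_{\F_{p^{2^m}}/\F_p}(s)$ of order $2^m$ in $\F_p^\times$ (possible since $m\leq\ord_2(p-1)$ and the norm is surjective). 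Then $M$ acts on $C$ by $\phi$ and $M^{2^m}=N(s)\cdot\mathrm{id}$, so $\langle M\rangle\cong C_{2^{2m}}$ and $\langle M^{2^m}\rangle\cong C_{2^m}$ centralises $C$. Taking $A=CK$ and $\chi$ a faithful linear character of the cyclic group $A$, Proposition \ref{prop:mainresfaithful} applies ($Q$ is basic as $K$ is cyclic); because $P$ is cyclic, $K=A_2$ is its unique subgroup of order $2^m$ and meets every nontrivial subgroup, so $\max_{H\cap A_2=1}|H|=1$ and the element $\rho=\tr\Ind_A^Q\chi$ has order $|P|/|A_2|=2^{2m}/2^m=2^m$ in $\CH(Q)$. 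Using $2^m\mid k$, one embeds this into $\GL_k$ and corrects determinants on the remaining $\ell=k/2^m$ blocks to land in $\SL_k(\F_p)$; one must also ensure the generator of $K$ is \emph{non-central} in $\PSL_k(\F_p)$, which is arranged by making $M^{2^m}$ block-scalar with distinct blocks when $\ell\geq 2$, and in the boundary case $k=2^m$ by instead taking the generator of $K$ inside the norm-one subtorus of the Cartan (where it centralises $C$ but is not a global scalar).

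It then remains to show that the class of $\rho$ survives induction to $G$. For this I would apply Corollary \ref{cor:muindres} with $Q_1=Q_2=Q$, $\tau_1=\tau_2=\tau=\Ind_A^Q\chi$ (irreducible monomial, induced from the cyclic $A$), $r=m$ and $k'=0$. Since $\tr\tau=\rho$ already has order $2^m$ in $\CH(Q)$, the only remaining condition is that the accompanying rational number have $2$-adic valuation $0$. By Corollary \ref{cor:mu} that number is the multiplicity $\mu(\rho,\Res_Q\Ind_Q^G\rho)$, a positive integer (the identity double coset already contributes $\rho$ once), so the task reduces to proving this multiplicity is \emph{odd}, i.e.\ that the non-identity double cosets in $Q\backslash G/Q$ together contribute an even number of copies of $\rho$. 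Via the explicit formula of Corollary \ref{cor:mu}, Lemma \ref{lem:trace}, and the simplifications of Remark \ref{simplification} (as $A$ is cyclic, each class of cyclic subgroups yields at most one term, and only those of square-free index survive the Möbius factor), this is a finite sum over conjugacy classes of cyclic subgroups $U\leq G$ weighted by $|N_G(U)|\,\phi(|U|)$, whose $2$-adic valuation is to be shown to vanish.

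The main obstacle is exactly this final $2$-adic computation in $G=\PSL_k(\F_p)$: it requires genuine group-theoretic input about $\PSL_k(\F_p)$ — for each relevant cyclic $U$, the order of $N_G(U)$ and the number of $G$-conjugates of the cyclic subgroups of $A$ meeting $A$ — rather than formal manipulation, and is where the isolation of an odd ``diagonal'' contribution must be carried out carefully. A secondary, more clerical obstacle is the determinant-and-centre adjustment in the construction, ensuring simultaneously that $Q\leq\SL_k(\F_p)$, that the $P$-action on $C$ remains faithful, and that $K$ stays of order $2^m$ in $\PSL_k(\F_p)$; the delicate point there is the boundary case $k=2^m$, where the naive scalar generator of $K$ would become central and must be replaced by the norm-one torus element described above.
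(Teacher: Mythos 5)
Your setup is sound and follows the same general framework as the paper (reduce to a $2$-quasi-elementary $Q$, get a class of large order in $\CH(Q)$ via Proposition \ref{prop:mainresfaithful}, then push it into $G$ via Corollary \ref{cor:muindres}), but the proof has a genuine gap: the final $2$-adic valuation computation, which you explicitly defer as ``the main obstacle,'' \emph{is} the content of the theorem, and nothing in your proposal makes it tractable. In the paper this step occupies most of the argument: the cyclic part is taken to be $C_p\times C_l$ with $C_p$ unipotent and $l$ a Zsigmondy prime for $p^{k-2}-1$, precisely so that the normaliser of every relevant cyclic $U\le CA_p$ in $\PSL_k(\F_p)$ is explicitly computable (Borel times normaliser of a non-split Cartan), a single pair of terms $S(C_{lp})+S(C_{2lp})$ can be isolated as dominant, and all other terms are shown to have strictly larger valuation. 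Your $C$ --- an arbitrary odd-order cyclic subgroup of $\F_{p^{2^m}}^\times$ on which Frobenius acts faithfully --- carries no such rigidity: its subgroups can have large and hard-to-describe normalisers in $G$, and you give no mechanism for isolating an odd ``diagonal'' contribution.

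There is also a quantitative reason to doubt that your particular target ($k'=0$, i.e.\ the multiplicity $\mu(\rho,\Res_Q\Ind_Q^G\rho)$ being \emph{odd}) is achievable. Your $\rho$ has order only $2^m$ in $\CH(Q)$, so you have no slack: the sum must have valuation exactly $0$. The paper instead arranges $\rho$ to have order $2^n$ in $\CH(Q)$ with $n=\ord_2(p-1)\ge m$, and only needs the sum to have valuation at most $n-m$, which is what the computation actually delivers; when $n>\ord_2(k)$ the relevant valuation is strictly positive, and an analogous phenomenon in your setting would kill the argument. Observing that the identity double coset contributes $1$ to the multiplicity shows positivity, not oddness, so no progress is made on parity. (The construction itself is plausible modulo the determinant/centre adjustments you flag, and the $\PSL\to\SL$ reduction via Lemma \ref{lem:lifts} is correct, but as it stands the proof is an unverified strategy rather than a proof.)
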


In the remainder of the section we prove the theorem using Corollary
\ref{cor:muindres}.
We will construct a 2-quasi-elementary subgroup $Q=C\rtimes P$ of 
$G=\PSL_k(\F_p)$ and a rational character 
$\rho$ of $Q$ such that $\Ind_Q^G\rho$ has order divisible by
$2^{\min(\ord_2(k),\ord_2(p-1))}$ in $\CH(G)$.

\begin{lemma}\label{lem:Zsigmondy}
Let $p$ be an odd prime and $k\geq 4$ an integer. If $k=4$, assume that
$p\equiv 1\pmod 4$. Then there exists a prime number
$l$ that divides $p^{k-2}-1$ but does not divide $p^s-1$ for any $s<k-2$.
\end{lemma}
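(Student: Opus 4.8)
The plan is to recognise the desired prime $l$ as a \emph{primitive prime divisor} of $p^{k-2}-1$, that is, a prime dividing $p^{k-2}-1$ but none of $p^s-1$ for $0<s<k-2$, and then to invoke Zsigmondy's theorem (in the special case of base $1$, due to Bang). In the form I would use it, this asserts: for integers $a\geq 2$ and $n\geq 1$, the number $a^n-1$ has a primitive prime divisor, with the sole exceptions $(a,n)=(2,6)$, the case $n=1$ with $a=2$, and the case $n=2$ with $a+1$ a power of $2$. Setting $a=p$ and $n=k-2$, the conclusion of the lemma is precisely the existence of such a primitive prime divisor: a prime whose multiplicative order modulo $l$ equals $k-2$ divides $p^s-1$ exactly when $(k-2)\mid s$, and no $s$ with $0<s<k-2$ has this property. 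Everything therefore reduces to checking that none of the exceptional pairs occurs under our hypotheses.

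The verification of the exceptions is the only point requiring care, and it is where the hypotheses on $p$ and $k$ are consumed. Since $k\geq 4$ we have $n=k-2\geq 2$, so the exception $n=1$ is vacuous. The exception $(a,n)=(2,6)$ would force $p=2$, which is excluded because $p$ is odd. The remaining exception is $n=2$, i.e.\ $k=4$, with $p+1$ a power of $2$; this is exactly the case in which the extra hypothesis $p\equiv 1\pmod 4$ is imposed. I would rule it out by noting that $p\equiv 1\pmod 4$ forces $p+1\equiv 2\pmod 4$, so the only power of $2$ that $p+1$ could equal is $2$ itself, giving the absurdity $p=1$. Hence no exceptional pair arises, a primitive prime divisor exists, and the lemma follows.

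I expect no serious obstacle, as the only genuine content is Zsigmondy's (Bang's) theorem, which I would cite rather than reprove. Should a self-contained argument be preferred, I would instead study the prime factorisation of the cyclotomic value $\Phi_{k-2}(p)$, using the standard dichotomy that any prime $l\mid\Phi_{k-2}(p)$ either satisfies $l\equiv 1\pmod{k-2}$, and is then a primitive prime divisor, or else divides $k-2$. A primitive divisor can fail to exist only when every prime factor of $\Phi_{k-2}(p)$ divides $k-2$, forcing $\Phi_{k-2}(p)$ to be a prime power supported on the largest prime factor of $k-2$; bounding $\Phi_{k-2}(p)$ from below by $(p-1)^{\phi(k-2)}$ and comparing recovers exactly the same short list of exceptions, with the hypothesis $p\equiv 1\pmod 4$ entering the $k=4$ case in the identical fashion.
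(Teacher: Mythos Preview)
Your proof is correct and follows the same approach as the paper: the paper's entire proof is the single sentence ``This is a special case of Zsigmondy's Theorem,'' and you have simply spelled out the verification that none of the exceptional cases of Zsigmondy applies under the given hypotheses (which is indeed where the condition $p\equiv 1\pmod 4$ for $k=4$ is needed).
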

\begin{proof}
This is a special case of Zsigmondy's Theorem \cite{Zsi}.
\end{proof}

Write $Q_{2^N}$ for the generalised quaternion group of order $2^N$.

\begin{lemma}
\label{sl2syl2}
The group $\SL_2(\F_q)$, $q=p^k$ has a 2-Sylow subgroup of the form
\begin{itemize}
\item 
$S=\{\smallmatrix 1*01\}\iso C_p^k$ if $p=2$;
\item
$
  S=\langle c,h \rangle\iso Q_{2^{N}}, 
  c=\smallmatrix \alpha00{\alpha^{-1}}, 
  h=\smallmatrix 01{-1}0
$
with $\alpha\in\F_q^\times$ of exact order $2^{N-1}||q-1$, 
if $q\equiv 1\mod 4$;
\item
$
  S=\langle c,h \rangle\iso Q_{2^{N}}, 
  c=\smallmatrix \alpha{-\beta}\beta\alpha,  
  h=\smallmatrix \gamma\delta\delta{-\gamma}
$
with $\alpha+\beta \sqrt{-1}\in\F_{q^2}^\times$ of exact order $2^{N-1}||q+1$
and any choice of $\gamma,\delta\in \F_q$ with $\gamma^2+\delta^2=-1$, 
if $q\equiv 3\mod 4$.
\end{itemize}
Conjugation by the matrix $\iota=\smallmatrix{-1}001$ is an automorphism of
$S$, acting
as $-1$ in the first case, as $c\mapsto c, h\mapsto h^{-1}$ in the second case,
and as $c\mapsto c^{-1}, h\mapsto h c^{2m+1}$ for some $m$ in the last case.
\end{lemma}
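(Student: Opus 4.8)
The plan is to fix the order $2^N=|S|$ from the $2$-part of $|\SL_2(\F_q)|=q(q^2-1)$, and then to verify every assertion by explicit matrix arithmetic, treating the three cases in turn. When $p=2$ the factor $q^2-1$ is odd, so a full Sylow $2$-subgroup is the unipotent subgroup $\{\smallmatrix 1*01\}$, which is the additive group $(\F_q,+)\iso C_2^k$; and since $\iota\smallmatrix 1u01\iota=\smallmatrix 1{-u}01$, conjugation by $\iota$ acts as $u\mapsto-u$, that is, as $-1$ (which is trivial here because $-1=1$ in characteristic $2$). This case is immediate.

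For $q$ odd I would first record that $-I$ is the unique involution of $\SL_2(\F_q)$: any $A$ with $A^2=I$ and $A\neq\pm I$ would be diagonalisable with eigenvalues $1$ and $-1$, hence have $\det A=-1$, which is impossible. A finite $2$-group with a unique involution is cyclic or generalised quaternion, so it suffices to exhibit $c,h$ obeying $c^{2^{N-1}}=1$, $hch^{-1}=c^{-1}$, $h^2=c^{2^{N-2}}=-I$, and to check $|\langle c,h\rangle|=2^N$. I would use that $\F_q^\times$ is cyclic of order $q-1$ in the case $q\equiv1\pmod4$, and that in the case $q\equiv3\pmod4$ the norm-one subgroup of $\F_{q^2}^\times$ --- which is precisely the non-split torus $\{\smallmatrix\alpha{-\beta}\beta\alpha:\alpha^2+\beta^2=1\}$ under $\alpha+\beta\sqrt{-1}\mapsto\smallmatrix\alpha{-\beta}\beta\alpha$ --- is cyclic of order $q+1$. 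In each case the chosen $c$ generates the $2$-Sylow of the relevant (split, resp. non-split) torus, a cyclic group whose order $2^{N-1}$ is the $2$-part of $q-1$ (resp. of $q+1$); hence $c^{2^{N-2}}$ is the unique involution $-I=h^2$, while $hch^{-1}=c^{-1}$ and $h^2=-I$ are one-line matrix computations. As $[S:\langle c\rangle]=2$ this gives $|S|=2^N$, matching the $2$-part of $|\SL_2(\F_q)|$, so $S$ is a full Sylow subgroup.

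For the $\iota$-action the organising remark is that $\iota$ normalises the torus containing $\langle c\rangle$: it centralises the diagonal split torus, forcing $c\mapsto c$, and it inverts the non-split torus (since $\iota\smallmatrix a{-b}ba\iota=\smallmatrix ab{-b}a$), forcing $c\mapsto c^{-1}$. When $q\equiv1\pmod4$ a direct computation then gives $\iota h\iota=h^{-1}$ for $h=\smallmatrix 01{-1}0$, and this case is finished.

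The step I expect to be the main obstacle is the action on $h$ when $q\equiv3\pmod4$, the only case in which it is not even clear a priori that $\iota$ preserves $S$. A direct computation gives $\iota h\iota=h\,(\delta-\gamma\sqrt{-1})^2$, with $(\delta-\gamma\sqrt{-1})^2$ lying in the non-split torus; so to see both that $\iota$ normalises $S$ and that $h\mapsto hc^{2m+1}$, one must show that $(\delta-\gamma\sqrt{-1})^2$ lies in $\langle c\rangle$ and generates it, i.e. that $w=\delta-\gamma\sqrt{-1}$ has \emph{exact} order $2^N$. The fact that unlocks this is that every element of $\F_{q^2}^\times$ of order $2^N$ has norm $-1$: since the $2$-part of $q+1$ is $2^{N-1}$, we may write $1+q=2^{N-1}\ell$ with $\ell$ odd, whence $N(w)=w^{1+q}=(w^{2^{N-1}})^{\ell}=(-1)^{\ell}=-1$. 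One should therefore take $\gamma,\delta$ so that $\delta-\gamma\sqrt{-1}$ is a generator of the $2$-Sylow of $\F_{q^2}^\times$; such a generator has norm $-1$, so indeed $\gamma^2+\delta^2=N(w)=-1$, and then $w^2$ has order $2^{N-1}$ and generates the cyclic group $\langle c\rangle$, so $w^2=c^{2m+1}$ for an odd exponent and $\iota h\iota=hc^{2m+1}$, exactly as claimed.
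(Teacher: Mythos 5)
Your proof is correct, and it is precisely the ``direct computation'' that the paper's one-line proof leaves to the reader; there is no genuinely different route here. What is worth highlighting is that in the case $q\equiv 3\bmod 4$ you have correctly identified that the final assertion $\iota h\iota^{-1}=hc^{2m+1}$ does \emph{not} hold for an arbitrary pair $\gamma,\delta$ with $\gamma^2+\delta^2=-1$, but only when $w=\delta-\gamma\sqrt{-1}$ generates the $2$-Sylow subgroup of $\F_{q^2}^\times$ (equivalently, when $w^2$ generates $\langle c\rangle$): in general $w^2$ is only guaranteed to be an element of the norm-one torus whose order has full $2$-part $2^{N-1}$, and it may have a nontrivial odd part, in which case $\iota$ does not even normalise $S$. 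For example, for $q=11$ and $(\gamma,\delta)=(1,3)$ one finds $w^2=8-6\sqrt{-1}$ of order $12$ in the norm-one torus, so $\iota h\iota^{-1}\notin S$; whereas $(\gamma,\delta)=(4,4)$ works. So the clause ``any choice of $\gamma,\delta$'' in the statement is too strong for the last assertion, and your argument that a suitable choice always exists --- a generator $w$ of the $2$-Sylow of $\F_{q^2}^\times$ automatically has norm $w^{q+1}=(-1)^{\ell}=-1$, hence yields admissible $\gamma,\delta$ --- is exactly the repair needed, and is all that is used in Case B of the proof of Theorem \ref{pslmain}.
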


\begin{proof}
Direct computation.
\end{proof}

From now on, $G$ will denote $\PSL_k(\F_p)$.
The theorem only has content when $k$ is even and $p$ is odd, so we will assume
this. Write 
$$
  n=\ord_2(p-1)\ge 1, \qquad 
  N=\ord_2(p^{k-2}-1)\ge 3, \qquad
  m=\ord_2(k-2)\ge 1.
$$

\textbf{Case A:}
Either $k>4$ or $p\equiv 1\mod 4$. 
Let $A$ be a generator of a non-split Cartan subgroup
$\F_{p^{k-2}}^\times=\GL_1(\F_{p^{k-2}})\<\GL_{k-2}(\F_p)$, and
$l$ a prime divisor of $p^{k-2}-1$ as in Lemma \ref{lem:Zsigmondy}. The conditions
on $l$ imply that the normaliser
of $\langle A^{\frac{p^{k-2}-1}l}\rangle\iso C_l$ in $\GL_{k-2}(\F_p)$ is generated by $A$ and
by the Frobenius automorphism $F\in\Gal(\F_{p^{k-2}}/\F_p)$ of order $k-2$.
Note that $F$ has determinant $-1$,
since it is an odd permutation on a normal basis of $\F_{p^{k-2}}/\F_p$.
Define 
\beq
  c_p = \mat{1&1&{} \cr {}&1&{} \cr {}&{}&{I_{k-2}}}, & & 
  c_l = \mat{1&{}&{} \cr {}&1&{} \cr {}&{}&{A^{\frac{p^{k-2}-1}l}}},\\
  x = \mat{d^{-1}&{}&{} \cr {}&1&{} \cr {}&{}&{U}}, & &
  f = \mat{-1&{}&{} \cr {}&1&{} \cr {}&{}&{F^{(k-2)/2^m}}},
\eeq
where $U=A^{\frac{p^{k-2}-1}{2^N}}$ and $d=\det U$. 
We view these matrices as representing elements of $G=\PSL_k(\F_p)$. Write 
$$
  C=\langle c_pc_l\rangle\iso C_{pl}, \;\;\;
  P=\langle x,f\rangle\iso C_{2^N}\rtimes C_{2^m}, \;\;\;
  Q=CP\iso (C_p\times C_l)\rtimes (C_{2^N}\rtimes C_{2^m}).
$$
Note that $C_{2^N}$ acts trivially on $C_l$, and through a $C_{2^n}$ quotient
on $C_p$, while $C_{2^m}$ acts through a $C_2$ quotient
on $C_p$ and faithfully on $C_l$.

\textbf{Case B:}
$p\equiv 3\mod 4$ and $k=4$. We take the same $c_p$ as in Case A, and $C=\langle c_p\rangle$.
A 2-Sylow of the centraliser of $C$ in $G$ is isomorphic to $\{1\}\times\Syl_2(\SL_2(\F_p))$,
which is isomorphic to $Q_{2^N}$ by the last case of Lemma \ref{sl2syl2}.
A 2-Sylow of the normaliser is
$$
  P=\Syl_2 N_G(C) = \Syl_2 Z_G(C) \rtimes \mat{-1&{}&{}&{}\cr{}&1&{}&{}\cr{}&{}&{-1}&{}\cr{}&{}&{}&1}
    \iso Q_{2^N}\rtimes \smallmatrix {-1}001,
$$
which is in fact isomorphic to the semi-dihedral group $SD_{2^{N+1}}$.
Again, we let $Q=CP$. 

\smallskip

In both cases, write $K$ for the centraliser of $C$ in $P$.
Thus, $K\iso C_{2^{N-n}}$ in case A,
and $K\iso Q_{2^N}$ in case B, where the isomorphism is that of Lemma
\ref{sl2syl2}. Let $A_p$ be $K$ in case A, and a cyclic subgroup of index
2 in $K$ that is normal in $Q$ in case B.
Let $\chi$ be faithful irreducible characters of $CA_p$, 
$\tau=\Ind_{CA_p}^Q\chi$ and $\rho=\tr\tau\in \Irr_{\Q}(Q)$.

\begin{lemma}\label{ordrho}
The character $\rho$ has order $2^n$ in $\CH(Q)$.
\end{lemma}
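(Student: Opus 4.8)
The plan is to apply Proposition~\ref{prop:mainresfaithful} directly: the group $Q$, the subgroup $A_p$, and the character $\chi$ have been set up precisely so that $\rho=\tr\Ind_{CA_p}^Q\chi$ is its $\rho$, and the lemma reduces to evaluating the displayed formula
$$
  \text{order of $\rho$ in $\CH(Q)$} = \frac{|P|}{|A_p|\cdot \max\limits_{H\leq P,\; H\cap A_p=1}|H|}.
$$
First I would check the hypotheses. In both cases $K$ is the centraliser of $C$ in $P$, i.e.\ the kernel of $P\to\Aut(C)$, and it has normal $2$-rank one: it is cyclic ($K\iso C_{2^{N-n}}$) in Case~A and generalised quaternion ($K\iso Q_{2^N}$) in Case~B, each having a unique involution. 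Hence $Q$ is basic. The chosen $A_p$ is exactly the maximal cyclic subgroup of $K$ normal in $P$ that the proposition demands: all of $K$ when $K$ is cyclic (Case~A), and the characteristic cyclic index-$2$ subgroup when $K\iso Q_{2^N}$ (Case~B). Since $A_p$ centralises $C$ and has order prime to $|C|$, the group $A=CA_p=C\times A_p$ is cyclic, so it carries the faithful one-dimensional character $\chi$, and the proposition applies, giving $\rho\in\Irr_\Q(Q)$ together with the formula above.

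It then remains to compute the two factors in the denominator. In Case~A, $|P|=2^{N+m}$ and $|A_p|=|K|=2^{N-n}$; in Case~B, $|P|=2^{N+1}$ and $|A_p|=2^{N-1}$, with $n=1$. So everything comes down to the combinatorial quantity $\max_{H\cap A_p=1}|H|$, which I claim equals $2^m$ in Case~A and $2$ in Case~B. Plugging in gives $2^{N+m}/(2^{N-n}\cdot 2^m)=2^n$ and $2^{N+1}/(2^{N-1}\cdot 2)=2=2^n$ respectively, as required.

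The heart of the argument, and the step I expect to be the main obstacle, is this computation of $\max_{H\cap A_p=1}|H|$, which I would handle uniformly. Let $Z\iso C_{2^N}$ be the cyclic normal subgroup of $P$ with $A_p\leq Z$: this is $\langle x\rangle$ in Case~A and the cyclic index-$2$ subgroup of $SD_{2^{N+1}}$ in Case~B, so that $P/Z\iso C_{2^m}$ and $P/Z\iso C_2$ respectively. Since $N>n$ in both cases (indeed $N=n+\ord_2(p+1)+m-1$ by lifting the exponent, and $N\geq 3$ in Case~B), the subgroup $A_p$ contains the unique order-$2$ subgroup of the cyclic group $Z$. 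Consequently any $H$ with $H\cap A_p=1$ must meet $Z$ trivially: otherwise $H\cap Z$ would be a nontrivial subgroup of the cyclic group $Z$ and hence contain that unique involution, which lies in $A_p$, a contradiction. Therefore $H$ injects into $P/Z$, so $|H|\leq|P/Z|$, and this bound is attained (by $\langle f\rangle$ in Case~A and by any order-$2$ element outside $Z$ in Case~B). This yields $\max|H|=2^m$ and $\max|H|=2$ respectively, completing the computation and hence the proof.
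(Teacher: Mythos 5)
Your proposal is correct and follows exactly the paper's route: the paper's proof likewise consists of invoking Proposition~\ref{prop:mainresfaithful} and observing that the largest subgroup $H\leq P$ with $H\cap A_p=1$ has order $2^m$ in Case~A and $2$ in Case~B, giving $2^{N+m-(N-n)-m}=2^n$ and $2^{N+1-(N-1)-1}=2=2^n$ respectively. The only difference is that you spell out the verification of the maximality claim (via the unique involution of the cyclic normal subgroup $Z\supseteq A_p$), which the paper asserts without proof.
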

\begin{proof}
We will use Proposition \ref{prop:mainresfaithful}.
The biggest subgroup of $P$ that intersects $CA_p$ trivially is of order
2 in case B, and of order $2^m$ in case A.
So the order of $\rho$ in $\CH(G)$ is $2^{N+1-(N-1)-1}=2$ in case B, and
$2^{N+m-(N-n)-m}=2^n$ in case A.
\end{proof}

Finally, we show that
$\Ind_Q^G\rho$ has order divisible by $2^{\min(\ord_2(k),\ord_2(p-1))}$ in $\CH(G)$.
We will use Corollary \ref{cor:muindres} with $Q_1=Q_2=Q$ and $\chi_1=\chi_2=\chi$.
In view of Lemma \ref{ordrho}, it suffices
to show that
\begin{eqnarray}\label{muCaseA}
\sum_{[U]\in CC_{\cyc}(G)} S(U)
\end{eqnarray}
has 2-adic valuation at most $n-\min(\ord_2(k),\ord_2(p-1))$, where
for $U\leq CA_p$,
$$
S(U) = \frac{[\Q(\tau):\Q]}{|CA_p|^2}|N_G(U)|\phi(|U|)
  \cdot \biggl(\sum_{D\leq CA_p\atop D\sim U}
  \frac{\mu([D:D\cap\ker\chi])}{\phi([D:D\cap\ker\chi])}\biggr)^2.
$$
Note that since $CA_p$ is cyclic and $\chi$ is faithful, this simplifies to
$$
S(U) = \frac{[\Q(\tau):\Q]}{|CA_p|^2\phi(|U|)}|N_G(U)|\mu(|U|)^2,
$$
see Remark \ref{simplification}.
In particular, $S(U)=0$ if $U$ has non-square-free order.

\textbf{Case A}.

The subgroups of $CK$ of square-free
order are $C_{2lp}$, $C_{lp}$, $C_{2l}$, $C_l$, $C_{2p}$, $C_{p}$,
$C_2$, and $C_1$. We will show that $S(C_{lp}) + S(C_{2lp})$ has a strictly
lower 2-adic valuation than the rest of the sum, and that this valuation
is $n-\min(\ord_2(k),n)$. A summary of the calculations that follow is:

\begin{eqnarray*}
\ord_2 [\Q(\tau):\Q] &=& \ord_2(l-1)+N-n-1-m,\\
\ord_2 |CK|^2 &=& 2(N-n),\\
\phi(|C_{lp}|) = \phi(|C_{2lp}|) &=& (l-1)(p-1),\\
|N_G(C_{lp})|=|N_G(C_{2lp})| &=& \frac{(k-2)p(p^{k-2}-1)(p-1)}{\gcd (k,p-1)},\\
\ord_2(S(C_{lp})+S(C_{2lp})) &=& \ord_2(2S(C_{lp}))\\
& = &1+\ord_2(l-1)+N-n-1-m-2(N-2) + N+\\
& & n + m -\min(\ord_2(k),n) - \ord_2(l-1)+n\\
& = & n-\min(\ord_2(k),n).
\end{eqnarray*}
The assertions concerning $|CK|$ and $\phi(|C_{lp}|)$ are clear.

Since the conjugation action of $P$ on $\Irr(CK)$ is through Galois automorphisms,
and $\ker(P\rightarrow \Aut(CK))$ has index $2^{n+m}$ in $P$, we have
$$
[\Q(\tau):\Q]=2^{-n-m}[\Q(\chi):\Q]=\frac{p-1}{2^n}\frac{(l-1)2^{N-n-1}}{2^m},
$$
with 2-adic valuation $\ord_2(l-1)+N-n-1-m$.


The normaliser $N_{\GL_k(p)}$ of the preimage of $C_{lp}$ under $\SL\rightarrow \PSL$
consists of block diagonal matrices, with the normaliser of non-split Cartan
in the lower right corner (order $(k-2)(p^{k-2}-1)$), and a Borel subgroup in the top left
(order $p(p-1)^2$). The determinant is surjective on $N_{\GL_k(p)}$,
and $N_{\GL_k(p)}$ contains $Z(\GL_k(p))$, so the normaliser of $C_{lp}$ in $\PSL$ has order
$\frac{(k-2)(p^{k-2}-1)p(p-1)}{\gcd(k,p-1)}$, with 2-adic
valuation $N+n+m-\min(\ord_2(k),n)$. This is also the normaliser of $C_{2lp}$.

It remains to show that the rest of the sum in equation (\ref{muCaseA})
has strictly greater 2-adic valuation than $\ord_2(S(C_{lp})+S(C_{2lp}))$.
If $U\leq C$, then $|N_G(U)|$ and  $|N_G(UC_2)|$ agree up to a power of $p$,
$\phi(|U|) = \phi(|UC_2|)$,
while $\mu(|U|)=-\mu(|UC_2|)$. It follows that the 2-adic valuation of $S(U) +
S(UC_2)$ is at least 1 greater than that of $S(U)$.

Moreover, for any $U\leq C_{lp}$, the normaliser of $U$ in $G$ contains
that of $C_{lp}$, while $1/\phi(|U|)$ has strictly greater 2-adic valuation than 
$1/\phi(|C_{lp}|)$ whenever $U\neq C_{lp}$. This establishes the claim.
\newline

\textbf{Case B}.
The subgroups of $CA_p$ of square-free order are $C_1$, $C_2$, $C_p$, and $C_{2p}$.
We will show that $\ord_2(\sum S(U)) = \ord_2(S(C_p)+S(C_{2p}))=0$.
Again, we summarise the calculations as follows:
\begin{eqnarray*}
\ord_2 [\Q(\tau):\Q] &=& N-3,\\
\ord_2 |CA_p|^2 &=& 2N-2,\\
\phi(|C_{p}|) = \phi(|C_{2p}|) &=& p-1,\\
|N_G(C_{p})|=p^4|N_G(C_{2p})| &=& p^4\cdot\frac{(p-1)^3p^2(p+1)}{2},\\
\ord_2(S(C_{p})+S(C_{2p})) &=& \ord_2((1+p^4)S(C_{2p}))\\
& = &1+N-3-2N+2-1 + N+1=0.
\end{eqnarray*}
The assertions concerning $|CA_p|$ and $\phi$ are clear.

It follows from the description of the $P$-action on $\Irr(CK)$
that $[\Q(\tau):\Q]=\frac12[\Q(\chi):\Q]$, and has 2-adic valuation $2^{N-3}$.

The normaliser of $C_{2p}$ in $\GL_4$ is block diagonal, with all invertible matrices
in the bottom right corner, and Borel in the top left. So its order in $\PSL$ is
$\frac{(p-1)^3p^2(p+1)}{2}$ with 2-adic valuation $N+1$. 
Finally, $|N(C_p)|=p^4|N(C_{2p})|$, e.g. see Murray \cite{Mur} \S4.

It remains to show that the 2-adic valuation of $S(C_1)+S(C_2)$ is positive.
The normaliser of $C_2$ in $\GL_4$ is $\GL_2\times \GL_2$, so the order of the normaliser
in $\PSL$ is $\frac{(p-1)^3p^2(p+1)^2}{2}$, with 2-adic valuation $2N$, and the
normaliser of $C_1$ is even bigger. So the 2-adic valuations of $S(C_1)$ and of $S(C_2)$
are positive.

\begin{corollary}\label{cor:psl}
As $G$ ranges over the simple groups $\PSL_{k}(\F_p)$, and therefore also
over $\SL_k(\F_p)$, the exponent of $\C(G)_2$ is unbounded.
\end{corollary}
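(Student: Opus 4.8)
The plan is to transfer the lower bound on $\exp\bigl(\CH(G)_2\bigr)$ provided by Theorem \ref{pslmain} to $\C(G)_2$, losing only a bounded factor that is controlled by Schur indices. First I would record the purely formal observation that, since $\Perm(G)\subseteq R_\Q(G)\subseteq\Char_\Q(G)$, the group $\C(G)=R_\Q(G)/\Perm(G)$ sits inside $\CH(G)=\Char_\Q(G)/\Perm(G)$ as a subgroup, with
$$
  \CH(G)/\C(G)\iso \Char_\Q(G)/R_\Q(G).
$$
The lattice $\Char_\Q(G)$ is free on the $\Q$-irreducible characters $\tr\tau$, while $R_\Q(G)$ is free on their rational hulls $\hat\tau=\schur(\tau)\tr\tau$ (this is the content of Lemma \ref{lem:schurnoschur}), so in terms of the basis $\{\tr\tau\}$ the sublattice $R_\Q(G)$ is diagonal with entries $\schur(\tau)$, giving
$$
  \Char_\Q(G)/R_\Q(G)\iso\bigoplus_{[\tau]}\Z/\schur(\tau)\Z,
$$
the sum running over Galois orbits $[\tau]$ of $\Irr(G)$. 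Consequently the exponent of $\bigl(\CH(G)/\C(G)\bigr)_2$ equals the largest power of $2$ dividing any Schur index $\schur(\tau)$ of $G$.

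The key external input is then a \emph{uniform} bound on these Schur indices: for every $k$ and every odd prime $p$, the Schur indices of the irreducible characters of $\SL_k(\F_p)$, and hence of its central quotient $\PSL_k(\F_p)$, divide $2$. This is a statement about Schur indices of groups of Lie type of type $A$, for which I would cite Turull's work (this is precisely the point at which his help is acknowledged). Granting it, $\exp\bigl((\CH(G)/\C(G))_2\bigr)\le 2$ uniformly over the whole family.

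Finally I would combine the two ingredients via a standard fact about finite abelian groups: for any short exact sequence $0\to A\to B\to C\to 0$ one has $\exp(B)\mid\exp(A)\exp(C)$, and passing to $2$-parts is exact. Applied to $0\to\C(G)\to\CH(G)\to\CH(G)/\C(G)\to 0$ this yields
$$
  \exp(\CH(G)_2)\mid\exp(\C(G)_2)\cdot\exp\bigl((\CH(G)/\C(G))_2\bigr)\le 2\,\exp(\C(G)_2).
$$
By Theorem \ref{pslmain}, taking $k=2^M$ and (by Dirichlet) a prime $p\equiv1\pmod{2^M}$ makes $\exp(\CH(G)_2)\ge 2^{\min(\ord_2 k,\,\ord_2(p-1))}\ge 2^M$ arbitrarily large, so $\exp(\C(G)_2)\ge\tfrac12\exp(\CH(G)_2)$ is unbounded over $G=\PSL_k(\F_p)$. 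The corresponding statement for $\SL_k(\F_p)$ then follows from Lemma \ref{lem:lifts}, which embeds $\C(\PSL_k(\F_p))$ into $\C(\SL_k(\F_p))$ through the central quotient, so $\exp(\C(\SL_k(\F_p))_2)\ge\exp(\C(\PSL_k(\F_p))_2)$.

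The main obstacle is the uniform Schur index bound. Everything surrounding it is formal homological algebra on the finite abelian groups $\C(G)$ and $\CH(G)$; but the assertion that the $2$-part of the Schur indices of $\SL_k(\F_p)$ stays bounded (in fact by $2$) as $k$ and $p$ grow is genuine representation theory of finite groups of Lie type, and is exactly the ingredient supplied by Turull.
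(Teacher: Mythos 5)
Your argument is correct, and it reaches the conclusion by a slightly different route than the paper. The paper's own proof is two lines: it cites a sharper result of Turull (Lemma 5.6(1) of the cited paper on Schur indices of special linear groups), which says that \emph{all} Schur indices of $\PSL_k(\F_p)$ are trivial whenever $\ord_2(k)>\ord_2(p-1)$; choosing parameters in that regime gives $R_\Q(G)=\Char_\Q(G)$, hence $\C(G)=\CH(G)$ on the nose, and Theorem \ref{pslmain} finishes immediately. You instead invoke only the coarser uniform bound that every Schur index of $\SL_k(\F_p)$ divides $2$ (the main theorem of that same paper of Turull) and compensate with the exact sequence $0\to\C(G)\to\CH(G)\to\Char_\Q(G)/R_\Q(G)\to 0$, losing a harmless factor of $2$. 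All your formal steps are sound: $\Char_\Q(G)$ is free on the $\tr\tau$ and $R_\Q(G)$ on the $\schur(\tau)\tr\tau$, so the quotient is $\bigoplus_{[\tau]}\Z/\schur(\tau)\Z$; the exponent inequality for short exact sequences of finite abelian groups is compatible with passing to $2$-parts; the parameters $k=2^M$, $p\equiv 1\pmod{2^M}$ satisfy $k\ge 4$ and give $\min(\ord_2(k),\ord_2(p-1))=M$; and the passage from $\PSL_k$ to $\SL_k$ via Lemma \ref{lem:lifts} is exactly what the paper intends. One point to be aware of: your parameter choice lies in the regime $\ord_2(k)\le\ord_2(p-1)$, which is precisely where Turull's triviality lemma does \emph{not} apply, so your proof genuinely depends on the ``Schur index divides $2$'' theorem rather than on triviality, and you should cite that precise statement. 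The trade-off is that the paper's route is shorter, while yours is more robust: it works for any admissible $(k,p)$ with $\min(\ord_2(k),\ord_2(p-1))$ large, not only those with $\ord_2(k)>\ord_2(p-1)$.
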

\begin{proof}
If $\ord_2(k) > \ord_2(p-1)$, then by
\cite[Lemma 5.6(1)]{TurSchur} all Schur indices in $\PSL_k(\F_p)$ are trivial.
So the assertion follows from Theorem \ref{pslmain}.
\end{proof}



\begin{thebibliography}{10}

\bibitem{Berz}
G. Berz, Permutationsbasen f\"ur endliche Gruppen, Ph.D. thesis,
Augsburg, 1994 (Zbl 0924.20003).

\bibitem{Magma} W. Bosma, J. Cannon, C. Playoust, The Magma algebra system. 
  I: The user language, J. Symb. Comput. 24, No. 3--4 (1997), 235--265.

\bibitem{tamroot}
T. Dokchitser, V. Dokchitser, Regulator constants and the parity
conjecture, Invent. Math. 178 no. 1 (2009), 23--71.

\bibitem{Dress1}
A. W. M. Dress, Contributions to the theory of induced representations, Classical
Algebraic K-Theory and Connections with Arithmetic (H. Bass, ed.), Lecture Notes
in Math., vol. 342, pp. 183--240, Springer-Verlag, New York, 1972.

\bibitem{Dress2}
A. W. M., Dress, Induction and structure theorems for orthogonal representations of
finite groups, Ann. of Math. 102 (1975), 291--325.

\bibitem{Feit}
W. Feit, Characters of Finite Groups, W. A. Benjamin New York, 1967.

\bibitem{HT}
I. Hambleton, L. R. Taylor,
Rational permutation modules for finite groups,
Math. Z. 231 (1999), 707--726.

\bibitem{HTW}
I. Hambleton, L. R. Taylor, E. B. Williams, Detection theorems for K-theory
and L-theory, J. of Pure and Applied Algebra 63 (1990), 247--299.

\bibitem{Kle}
R. Kletzing, Structure and Representations of $Q$-Groups, Lecture Notes in Mathematics
1084 Springer-Verlag 1984.


\bibitem{Mur} 
S. H. Murray, Conjugacy classes in maximal parabolic subgroups of general linear groups,
J. Algebra 233 (2000), 135--155.

\bibitem{Ras}
J. R. Rasmussen, Rationally represented characters and permutation characters of
nilpotent groups, J. Algebra 29 (1974), 504--509.

\bibitem{Ritter}
J. Ritter, Ein Induktionssatz f\"ur rationale Charaktere von
nilpotenten Gruppen, J. Reine Angew. Math. 254 (1972), 133--151.

\bibitem{Segal}
G. Segal, Permutation representations of finite p-groups,
Quart. J. Math. Oxford (2) 23 (1972), 375--381.

\bibitem{SerLi}
J.-P. Serre, Linear Representations of Finite Groups, GTM 42,
Springer-Verlag 1977.

\bibitem{Sol}
L. Solomon, Rational characters and permutation characters,
Symposia Mathematica Instituto Nazionale Alta Mathematica 13 (1974), 453--466.

\bibitem{TurSchur}
A. Turull, The Schur indices of the irreducible characters of the
special linear groups, J. Algebra 235 (2001), 275--314.

\bibitem{TurGcdmin}
A. Turull, Characters, fields, Schur indices and divisibility,
Archiv Math. 101 (2013), 411--417.

\bibitem{Zsi}
K. Zsigmondy, Zur Theorie der Potenzreste, J. Monatshefte f\"ur Math. 3 (1892), 265--284.

\end{thebibliography}
\end{document}